\documentclass[11pt,reqno]{amsart}

\usepackage{color, tikz}
\usepackage{amsmath, amssymb, mathtools, array, ytableau, graphicx, subcaption}
\usepackage{pstricks,pst-node}
\usepackage{tikz,varwidth}
\usepackage{float}

\usepackage[top=1.2in, bottom=1in, left=.9in, right=.9in]{geometry}
\newtheorem{theorem}{Theorem}[section]
\newtheorem{lemma}[theorem]{Lemma}

\newtheorem{proposition}[theorem]{Proposition}

\newtheorem{conjecture}[theorem]{Conjecture}

\theoremstyle{remark}
\newtheorem*{remark}{Remark}

\numberwithin{equation}{section}

\def\Log{\operatorname{Log}}
\def\Li{\operatorname{Li}}
\renewcommand{\Re}{{\rm Re}}

\makeatletter
\@namedef{subjclassname@2020}{%
  \textup{2020} Mathematics Subject Classification}
\makeatother

\renewcommand{\Re}{{\rm Re}}

\def\Log{\operatorname{Log}}
\def\Li{\operatorname{Li}}

\begin{document}

%%%%%%%%%%%%%%%%%%%%%%%%%%%%%%%%%%%%%%%%%
%               Title, Etc.             %
%%%%%%%%%%%%%%%%%%%%%%%%%%%%%%%%%%%%%%%%%
\title[Inequalities for $t$-hook numbers]{Inequalities for the number of $t$-hooks in two partition classes arising from sum-product identities}

\date{\today}
\subjclass[2020]{Primary 11P82,  05A17}
\keywords{Integer partitions, $t$-hooks, the Rogers--Ramanujan identities, the little G\"{o}llnitz identities, asymptotic formulas}

\author[A. Dhar]{Aritram Dhar}
\address{Department of Mathematics, University of Florida, Gainesville, FL 32601, USA}
\email{aritramdhar@ufl.edu}

\author[B. Kim]{Byungchan Kim}
\address{School of Natural Sciences, Seoul National University of Science and Technology, 232 Gongneung-ro, Nowon-gu, Seoul, 01811, Republic of Korea}
\email{bkim4@seoultech.ac.kr}

\author[E. Kim]{Eunmi Kim$^\ast$}
\address{Institute of Mathematical Sciences, Ewha Womans University, 52 Ewhayeodae-gil, Seodaemun-gu, Seoul 03760, Republic of Korea}
\email{ekim67@ewha.ac.kr; eunmi.kim67@gmail.com}
\thanks{$^\ast$ Corresponding author.}

\author[A.J. Yee]{Ae Ja Yee}
\address{Department of Mathematics, The Pennsylvania State University, University Park, PA 16802, USA}
\email{yee@psu.edu}

%%%%%%%%%%%%%%%%%%%%%%%%%%%%%%%
%          Abstract           %
%%%%%%%%%%%%%%%%%%%%%%%%%%%%%%%
\begin{abstract} 
Motivated by recent study on the number of $t$-hooks in partitions arising from Euler's partition identity, we investigate the number of $t$-hooks in the sets from the first Rogers-Ramanujan identity and the first little G\"ollitz identity. In particular, for $t=1,2$, we obtain the generating functions for the number of $t$-hooks and prove $t$-hook inequalities by deriving asymptotic formulas.
\end{abstract}

%%%%%%%%%%%%%%%%%%%%%%%%%%%%%%%%%%%%%%%%%
%               Document Text           %
%%%%%%%%%%%%%%%%%%%%%%%%%%%%%%%%%%%%%%%%%
\maketitle

%%%%%%%%%%%%%%%%%%%%%%%%%%%%%%%%%%%%%%%%%%%%%
%          Section. Introduction      %
%%%%%%%%%%%%%%%%%%%%%%%%%%%%%%%%%%%%%%%%%%%%%
\section{Introduction}
A \textit{partition} $\lambda$ is a weakly decreasing finite sequence $\lambda = (\lambda_1,\lambda_2,\dots,\lambda_k)$ of positive integers. The elements $\lambda_i$ appearing in the sequence $\lambda$ are called the \textit{parts} of $\lambda$. The sum of all the parts of $\lambda$ is called the \textit{size} of $\lambda$ and denoted by $|\lambda|$. We say $\lambda$ is a partition of $n$ if its size is equal to $n$.

The most foundational and simplest identity in the theory of partitions due to Euler states that the number of partitions of $n$ into distinct parts is equal to the number of partitions of $n$ into odd parts \cite[Corollary 1.2]{andrews_book}. This identity has inspired extensive research over the past several centuries leading to various generalizations and refinements, including the work of Glaisher \cite{glaisher} and Sylvester \cite{sylvester} from the late 19th century, the generalization of Andrews \cite{andrews1969}, and the recent contribution of Andrews, Kumar and Yee \cite{AKY}. Recently, Andrews studied Euler's identity from a different angle while confirming two conjectures of Beck \cite{andrews2017, beck1}.  One of the results of Andrews can be reformulated as follows \cite[Theorem 2]{andrews2017}.
\begin{quote}
The total number of parts in the partitions of $n$ into distinct parts is greater than or equal to the total number of different parts in the partitions of $n$ into odd parts. 
\end{quote}

This inequality result was generalized by Ballantine, Burson, Craig, Folsom, and Wen to inequalities involving partition hooks \cite{ballantine1}.

The \textit{Young diagram} of a partition $\lambda$ is a way of representing $\lambda$ graphically where the parts $\lambda_i$ of $\lambda$, called \textit{cells}, are arranged in left-justified rows with $\lambda_i$ cells in the $i$-th row. The conjugate of a partition $\lambda = (\lambda_1,\lambda_2,\dots,\lambda_k)$ is the partition $\lambda^{\prime} = (\lambda^{\prime}_1,\lambda^{\prime}_2,\dots,\lambda^{\prime}_{\lambda_1})$ whose Young diagram has the columns of $\lambda$ as rows. For a cell in the $i$-th row and $j$-th column of the Young diagram of a partition $\lambda$, its \textit{hook length} is defined as $h(i,j) = \lambda_i + \lambda^{\prime}_{j} - i - j + 1$. See Figure \ref{fig1} below for an example of a Young diagram of a partition along with its hook lengths. A hook of length $t$ is called a $t$-hook. 
\begin{figure}[H]
\ytableausetup{centertableaux}
\ytableaushort
{{11}{9}{6}{5}{3}{2}{1},{7}{5}{2}{1},{4}{2},{3}{1},{1}}
* {7,4,2,2,1}
\par\quad\\
\caption{The Young diagram of the partition $\lambda = (7,4,2,2,1)$ with its hook lengths.}
\label{fig1}
\end{figure}

%We say that a partition has a \textit{$t$-hook} if there exists a hook of length $t$ in the Young diagram of the partition.

Let $a_t(n)$ be the total number of  $t$-hooks  in all partitions of $n$ into odd parts and $b_t(n)$ be the total number of $t$-hooks in all partitions of $n$  into distinct parts. The result of Andrews from \cite[Theorem 2]{andrews2017} implies 
\begin{equation}\label{andrews_ineq}
a_1(n) \ge b_1(n) \text{ for $n\ge 1$}.
\end{equation}
Noting this identity 
\[
\sum_{t\ge 1} a_t(n) =\sum_{t\ge 1} b_t(n) \text{ for $n\ge 1$ },
\]
Ballantine, Burson, Craig, Folsom, and Wen conjectured that the inequality of Andrews in \eqref{andrews_ineq} reverses for $t>1$ when $n$ is large enough \cite{ballantine1}. 

\begin{conjecture}[Ballantine--Burson--Craig--Folsom--Wen \cite{ballantine1}] \label{conj1.1}
For every integer $t\ge 2$, 
\begin{equation}\label{conj1.1_ineq}
a_t(n) - b_t(n) \to \infty \text{ as } n \to \infty. 
\end{equation} 
\end{conjecture}

The $t=2,3$ cases of this conjecture were proved by the authors of the paper \cite{ballantine1} and the remaining cases were settled by Craig, Dawsey, and Han \cite{craig1}. 

Euler's identity connects partitions with a gap condition (distinct parts) and those with a part congruence condition (odd parts), and many other partition identities share this structure. The main objective of this paper is to extend the hook bias question in Conjecture~\ref{conj1.1} to identities of this type,  more specifically, we will have our focus on identities involving gap $2$ conditions. 

The celebrated Rogers–Ramanujan identities, which were originally proved by Rogers \cite{rogers} and rediscovered by Ramanujan \cite{ramanujan}, are perfect examples of such identities, relating gap $2$ conditions on parts to congruence conditions modulo $5$.  The first Rogers--Ramanujan identity states that the number of partitions of $n$ with parts differing by $2$ is equal to the number of partitions of $n$ into parts congruent to $1$ or $4$ mod $5$. 

\iffalse
The following is the first Rogers--Ramanujan identity:
\begin{align*}
	\sum_{n\ge 0} \frac{ q^{n^2}}{(q;q)_n} =\frac{1}{(q,q^4;q^{5})_{\infty}}.
\end{align*}   

Let 
\begin{align*}
	\mathcal{R}_1&:=\{\lambda \, |\, \lambda_{i}-\lambda_{i+1} \ge 2 \},\\
	\mathcal{R}_2&:=\{\lambda \, |\, \lambda_{i}\equiv 1,4 \pmod{5}\}.%,\\
%	\mathcal{R}_3&:=\{\lambda \, |\, \text{no parts below the Durfee square i.e., }  \lambda_{\ell} \ge \ell \}.
\end{align*} 
For $j=1,2$, let 
\begin{align*}
	\widetilde{r}_{j, t} (\lambda)&:= \# \text{ $t$-hooks in $\lambda \in \mathcal{R}_j$},
\end{align*}
and
\begin{align*}
	r_{j,t}(n)&:=\sum_{|\lambda|=n} \widetilde{r}_{j,t} (\lambda).
\end{align*}
\fi

For a positive integer $t$, let $r_{1,t}(n)$ be the number of $t$-hooks in all partitions of $n$ with gap between parts being at least $2$ and and $r_{2,t}(n)$ be the number of $t$-hooks in all partitions of $n$ with parts congruent to $1,4$ modulo $5$.  Our first main result is the following inequalities, which are comparable to the inequalities in \eqref{andrews_ineq} and \eqref{conj1.1_ineq}.

\begin{theorem}\label{thm:r_ineq}
	For sufficiently large $n$, 
	\[
		r_{1,1} (n) > r_{2,1}(n) \quad\text{and}\quad r_{1,2} (n) < r_{2,2} (n). 
	\]
\end{theorem}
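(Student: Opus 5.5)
Our approach is to compute the four generating functions $R_{j,t}(q)=\sum_{n\ge0} r_{j,t}(n)q^n$ for $j,t\in\{1,2\}$ explicitly. We then derive asymptotic formulas for their coefficients via Wright's circle method (or Ingham's Tauberian theorem, using that all coefficients are monotone). Finally we compare the leading terms. All four series have the same exponential growth $\exp\bigl(2\pi\sqrt{n/15}\bigr)$, coming from $1/(q,q^4;q^5)_\infty$. The comparison therefore reduces to the polynomial prefactors, or to the constants in front of them.

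\emph{Congruence side.} For $\lambda$ with parts in a set $S$, the $1$-hooks are counted by the number of distinct part sizes. Hence
\[
R_{2,1}(q)=\frac{1}{(q,q^4;q^5)_\infty}\sum_{m\equiv\pm1 \ (5)} q^m .
\]
For $2$-hooks we use the standard description: a $2$-hook arises either from a part size occurring with multiplicity at least $2$ (a vertical domino at the bottom of that block), or from a part size $a$ whose next smaller distinct part size is at most $a-2$, or is absent (when $a\ge2$). Each of these is a local condition on multiplicities. The generating function is then $1/(q,q^4;q^5)_\infty$ times a sum of simple rational/theta-type factors, such as $\sum_m q^{2m}$ over allowed $m$, plus terms like $\sum_{a} q^a\prod_{b\in S,\ a-2<b<a}(1-q^b)$. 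Since parts are $\equiv 1,4\pmod 5$, consecutive allowed sizes differ by $2$ or $3$. The only problematic adjacency is $a-b=1$, which never occurs except for the pair $b=0$ versus $a=1$.

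\emph{Gap side.} Here multiplicities are all $1$ and consecutive parts differ by at least $2$. Every part thus contributes a $1$-hook, so $r_{1,1}(n)$ is the total number of parts. Its generating function is $\sum_k k q^{k^2}/(q;q)_k$. Similarly, every $2$-hook comes from a horizontal domino at the end of a row whose next row is at least $2$ shorter. This holds for all rows except when the last part equals $1$. So $r_{1,2}(n)$ equals the number of parts minus the number of partitions ending in part $1$, plus correction terms from vertical dominos (none, since parts are distinct). This gives $R_{1,2}=R_{1,1}-\sum_k q^{k^2}/(q;q)_{k-1}$, up to small boundary terms.

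\emph{Asymptotics.} For the gap side, we write $R_{1,1}(q)=\frac{\partial}{\partial z}\big|_{z=1}\sum_k z^k q^{k^2}/(q;q)_k$. The two-variable Rogers--Ramanujan sum has the saddle-point asymptotic, as $q=e^{-\varepsilon}\to1$,
\[
\sum_k \frac{z^kq^{k^2}}{(q;q)_k}\sim C(z)\exp\!\Big(\frac{L(z)}{\varepsilon}\Big),
\]
where $L(z)$ is given by dilogarithms at the root of $x^2+zx-z=0$... more precisely of $1-x=zx^2$. Differentiating in $z$ shows $R_{1,1}\sim c\,\varepsilon^{-1}\cdot$(RR function), so $r_{1,1}(n)\sim c'\sqrt n\,r(n)$. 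On the congruence side, $\sum q^m\sim \tfrac{2}{5\varepsilon}$, so $r_{2,1}(n)\sim c''\sqrt n\, r(n)$. The inequality $r_{1,1}>r_{2,1}$ then amounts to the numerical inequality $c'>c''$. We expect $c'$ to be $\log(\text{golden ratio})$-type and $c''=2/5$-type, to be checked numerically. For $t=2$ the leading constants agree with those for $t=1$ on the gap side, with a smaller-order correction from $\sum q^{k^2}/(q;q)_{k-1}$. On the congruence side the $2$-hook count is roughly doubled relative to $1$-hooks. So the inequality reverses at the level of leading constants.

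The main obstacle is making the gap-side asymptotics rigorous. This requires a saddle-point analysis of the $z$-deformed Rogers--Ramanujan sum uniformly near $z=1$, or a direct Meinardus/Wright-type treatment of $\sum_k k q^{k^2}/(q;q)_k$. We would try first to obtain the $\varepsilon\to0$ expansion (including the major arc) by the standard method of Garoufalidis--Zagier-type Nahm sum asymptotics. We would then invoke Ingham's Tauberian theorem, which applies because $r_{j,t}(n)$ is nondecreasing (adding $1$ to the largest part injects partitions). If needed, we would bound the minor arcs crudely through the product side.
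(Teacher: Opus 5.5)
Your plan has the same architecture as the paper's proof. You use the same generating functions: your $R_{1,2}=R_{1,1}-\sum_{k\ge1}q^{k^2}/(q;q)_{k-1}$ is exact, with no boundary terms, and your $2$-hook description on the congruence side gives exactly $d_{>1}+m_{>1}$. You then propose a saddle-point analysis of $\sum_k kq^{k^2}/(q;q)_k$ near $q=1$, product asymptotics on the congruence side, Ingham's Tauberian theorem, and a comparison of leading constants. Your predicted constants for $t=1$ are exactly the paper's: $S_{1,1}(q)\sim\frac{\phi^{1/2}}{5^{1/4}}\log(\phi)\,z^{-1}e^{\pi^2/(15z)}$ versus $S_{2,1}(q)\sim\frac{\phi^{1/2}}{5^{1/4}}\cdot\frac25\,z^{-1}e^{\pi^2/(15z)}$.

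Two slips need correcting.
\begin{itemize}
\item The $2$-hook count on the congruence side is not roughly doubled. The rational factor is $\sum_{m>1,\,m\equiv\pm1\ (5)}q^m+\sum_{m\equiv\pm1\ (5)}q^{2m}\sim\frac{2}{5z}+\frac{1}{5z}=\frac{3}{5z}$, because repeated parts contribute only half as much as distinct ones. So $r_{1,2}(n)<r_{2,2}(n)$ rests on $\log\phi\approx0.481<\frac35$, which you must compute rather than infer from a heuristic.
\item Adding $1$ to the largest part does not preserve $\mathcal{R}_2$ (e.g.\ $4\mapsto5$). For $\mathcal{R}_2$, adjoin a part equal to $1$ instead. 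This is injective and does not decrease $d$ or $d_{>1}+m_{>1}$. It gives the monotonicity the paper reads off from $(1-q)S_{2,t}(q)$ having nonnegative coefficients.
\end{itemize}

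The analytic core is where your proposal stops, and the fallback you mention would not work. The paper does not differentiate a two-variable asymptotic. It proceeds in these steps:
\begin{itemize}
\item It restricts to $|n-\log(\phi)/\varepsilon|\le\varepsilon^{-3/5}$ and writes $n=\log(\phi)/z+u/\sqrt z$.
\item It uses Lemma~\ref{lem:Zagier_asym} to turn each summand into a Gaussian $e^{\pi^2/(15z)-\frac{\sqrt5\phi}{2}u^2}$ (Proposition~\ref{prop:narrow}).
\item It sums this by Euler--Maclaurin for $y\ll\varepsilon^{2/5+\delta}$.
\item It bounds the remaining $n$ on the real axis (Proposition~\ref{prop:far}).
\item It handles $\varepsilon^{1/2-\delta}\ll y\ll1$ via $\Re\big(\Lambda(y)/(1+iy)\big)<\pi^2/15$ for $y\neq0$, with quadratic decay at $y=0$ (Lemma~\ref{lem:s}).
\end{itemize}

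Your envelope-theorem route nicely explains the constant. The derivative of the exponent in the auxiliary variable at $1$ is the saddle location $-\log x^*=\log\phi$, where $x^*=\phi^{-1}$ solves $1-x=x^2$. Making it rigorous, however, needs the two-variable asymptotic with relative error uniform for the auxiliary variable in a complex disc around $1$ and for $z$ in the cone. That is the same saddle-point work plus extra uniformity, not less.

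``Crude minor-arc bounds through the product side'' are also unavailable here. $\sum_k kq^{k^2}/(q;q)_k$ has no product form. The trivial bound $|S_{1,1}(q)|\le S_{1,1}(|q|)$ is as large as the real-axis value, so it cannot produce the off-axis decay that Lemma~\ref{lem:s} supplies.

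Finally, for $t=2$ you still need the subtracted sum to be of lower order in the cone. The paper gets this from $\sum_{n\ge1}q^{n^2}/(q;q)_{n-1}=\frac{1}{(q,q^4;q^5)_\infty}-\frac{1}{(q^2,q^3;q^5)_\infty}$ and the modular asymptotics of these two products.
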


\iffalse
The following is the first little G\"ollnitz identity:
\begin{align*}
	\sum_{n\ge 0} \frac{ q^{n^2+n} (-q^{-1};q^2)_n}{(q^2;q^2)_n}=(-q,-q^2, -q^4;q^4)_{\infty}  =\frac{1}{(q,q^5,q^6;q^{8})_{\infty}}.
\end{align*}   
Let 
\begin{align*}
	\mathcal{G}_1&:=\{\lambda \, |\, \lambda_{i}-\lambda_{i+1} \ge 2, \;\; \lambda_{i}-\lambda_{i+1} >2 \text{ if $\lambda_i\equiv 0 \pmod{2}$}\},\\
%	\mathcal{G}_2&:=\{\lambda \, |\, \lambda_{i}-\lambda_{i+1}\ge 1, \;\; \lambda_{i} \equiv 0, 1, 2 \pmod{4} \},\\
	\mathcal{G}_2&:=\{\lambda \, |\, \lambda_{i}\equiv 1,5, 6 \pmod{8}\}.
\end{align*} 
For $j=1,2$, let 
\begin{align*}
	\widetilde{g}_{j,t} (\lambda) &:= \# \text{ $t$-hooks in $\lambda \in \mathcal{G}_j$},
\end{align*}
and
\begin{align*}
	g_{j,t}(n)&=\sum_{|\lambda|=n} \widetilde{g}_{j,t} (\lambda).
\end{align*}
\fi

We prove these inequalities by deriving asymptotic formulas for $r_{1,1}(n), r_{1,2}(n), r_{2,1}(n)$ and $r_{2,2}(n)$, which are given in the following theorems.

\begin{theorem}\label{thm:r1_asymp}
	As $n \to \infty$,
	\[
		r_{1,1}(n) \sim r_{1,2}(n) \sim \frac{3^{\frac14} \phi^{\frac12}  \log(\phi) }{2 \pi} n^{-\frac14}e^{2\pi \sqrt{\frac{n}{15}}},
	\]
	where $\phi$ is the golden ratio. 
\end{theorem}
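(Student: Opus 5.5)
The plan is to first reduce both $r_{1,1}(n)$ and $r_{1,2}(n)$ to statistics that are easy to encode, then obtain the asymptotics of the generating function as $q \to 1^-$ and transfer them to coefficients with a Tauberian theorem.

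\textbf{Combinatorial reduction.} Let $\lambda$ have gap between parts at least $2$, so its parts are distinct.
\begin{itemize}
\item A $1$-hook is a removable corner. There is exactly one at the end of each row, since $\lambda_{i+1}<\lambda_i$. Hence the number of $1$-hooks of $\lambda$ is its number of parts $\ell(\lambda)$.
\item A $2$-hook has arm $+$ leg $=1$.
\begin{itemize}
\item Arm $0$ and leg $1$ would require $\lambda_{i+1}=\lambda_i$, which is impossible.
\item Arm $1$ and leg $0$ occurs at the cell $(i,\lambda_i-1)$ exactly when $\lambda_i\ge 2$ and $\lambda_{i+1}\le \lambda_i-2$. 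The gap condition guarantees the latter.
\end{itemize}
So the number of $2$-hooks is the number of parts $\ge 2$, which equals $\ell(\lambda)$ minus the indicator that $1$ is a part.
\end{itemize}
Writing $R(z;q)=\sum_{m\ge0} z^m q^{m^2}/(q;q)_m$, this gives
\[
\sum_{n\ge 0} r_{1,1}(n)q^n=\frac{\partial}{\partial z}R(z;q)\Big|_{z=1}=\sum_{m\ge1}\frac{m\,q^{m^2}}{(q;q)_m}.
\]
For $r_{1,2}$, the subtracted term is the number of gap-$2$ partitions of $n$ containing $1$. This count is at most the number of all gap-$2$ partitions of $n$, which by the first Rogers--Ramanujan identity and the standard asymptotic for $1/(q,q^4;q^5)_\infty$ is $O(n^{-3/4}e^{2\pi\sqrt{n/15}})$. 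That is smaller by a factor $n^{-1/2}$ than the claimed main term, so $r_{1,2}(n)\sim r_{1,1}(n)$ once the asymptotic for $r_{1,1}$ is established.

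\textbf{Analytic step.} Put $q=e^{-\varepsilon}$ with $\varepsilon\to0^+$. I will show
\[
\sum_{m\ge1}\frac{m\,q^{m^2}}{(q;q)_m}\sim \frac{\log\phi}{\varepsilon}\cdot\frac{1}{(q,q^4;q^5)_\infty}.
\]
The heuristic is that the sum concentrates near $m\approx x/\varepsilon$ at the saddle point. Using $\log (q;q)_m\approx -\frac1\varepsilon(\Li_2(1)-\Li_2(e^{-x}))$ together with the Euler--Maclaurin correction terms, the exponent is
\[
\frac{1}{\varepsilon}\bigl(-x^2+\Li_2(1)-\Li_2(e^{-x})\bigr).
\]
It is maximized where $1-u=u^2$ with $u=e^{-x}$, so $u=\phi^{-1}$ and $x=\log\phi$. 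The critical value is $\pi^2/15$, consistent with the known modular asymptotic of $1/(q,q^4;q^5)_\infty$.

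To make this rigorous, I will do a Gaussian (Laplace) approximation of the terms $q^{m^2}/(q;q)_m$ around $m_0=\log\phi/\varepsilon$, in a window of width $\varepsilon^{-1/2+\delta}$. The tails are bounded by the concavity of the exponent. This shows that the weighted sum equals $(m_0+O(\varepsilon^{-1/2+\delta}))$ times the unweighted sum, and the unweighted sum is exactly $1/(q,q^4;q^5)_\infty\sim \sqrt{\frac{2}{5-\sqrt5}}\,\cdots e^{\pi^2/(15\varepsilon)}$, with the precise constant read off from the modular transformation. \textbf{This uniform saddle-point control is the main technical obstacle.} In particular, I need error terms good enough to identify the leading constant, not just the exponent.

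\textbf{Tauberian step.} The sequence $r_{1,1}(n)$ is nondecreasing. Indeed, adding $1$ to the largest part maps gap-$2$ partitions of $n$ injectively into those of $n+1$ and preserves the number of parts. So Ingham's Tauberian theorem applies to the asymptotic $C\varepsilon^{\beta}e^{\pi^2/(15\varepsilon)}$. Its output has the shape $n^{-1/4}e^{2\pi\sqrt{n/15}}$ times a constant, and tracking the constants should give exactly $\frac{3^{1/4}\phi^{1/2}\log\phi}{2\pi}$. Combined with the reduction for $r_{1,2}$ above, this proves Theorem~\ref{thm:r1_asymp}.
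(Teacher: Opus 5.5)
\paragraph{The gap: the complex cone estimate.}
The gap is in the Tauberian step. Theorem~\ref{Tauberian} (Ingham's theorem in the corrected form of Bringmann--Jennings-Shaffer--Mahlburg) has two hypotheses. The first is the real-axis asymptotic $S_{1,1}(e^{-t})\sim\alpha t^{-1}e^{\pi^2/(15t)}$, which your argument supplies. The second is the bound $S_{1,1}(e^{-z})\ll|z|^{-1}e^{\pi^2/(15|z|)}$ for $z=\varepsilon(1+iy)$, uniformly in $|y|\le\Delta$, and you never address it.

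This hypothesis cannot be dropped, even for monotone coefficients. Let $c_n$ be the right-hand side of the theorem and set $b_n=c_n(1+\eta\sin\sqrt n)$. For small $\eta>0$ (after altering finitely many terms) $b_n$ is weakly increasing. Moreover $\sum b_ne^{-nt}\sim\sum c_ne^{-nt}$ as $t\to0^+$, because a saddle-point computation shows the oscillating part is smaller by a factor $e^{-c/t}$. Yet $b_n\not\sim c_n$.

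Positivity does not give the cone bound either. The trivial estimate $|S_{1,1}(e^{-z})|\le S_{1,1}(e^{-\varepsilon})$ exceeds $|z|^{-1}e^{\pi^2/(15|z|)}$ by about $\exp\bigl(\frac{\pi^2}{15\varepsilon}(1-(1+y^2)^{-1/2})\bigr)$, which is exponentially large for fixed $y\ne0$. Your first-moment comparison of $\sum m a_m$ with $m_0\sum a_m$, where $a_m=q^{m^2}/(q;q)_m$, uses $a_m>0$, so it does not carry over to complex $q$.

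What is missing is a genuinely complex estimate of the summands: for $q=e^{-\varepsilon(1+iy)}$ one must show that $|q^{m^2}/(q;q)_m|$ is exponentially smaller than $|q|^{m^2}/(|q|;|q|)_m$. The paper gets this near the peak in three steps:
\begin{itemize}
\item it applies Lemma~\ref{lem:Zagier_asym} with the complex parameter $w=\phi^{-1-iy}$ (Proposition~\ref{prop:wider});
\item it proves $\Re\bigl(\Lambda(y)/(1+iy)\bigr)<\pi^2/15$ for $y\ne0$, with quadratic decay at $y=0$ (Lemma~\ref{lem:s});
\item it combines these in Proposition~\ref{prop:S11_asymp} to treat $|y|\ll1$.
\end{itemize}
This is the bulk of the paper's analytic work and the real obstacle. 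The obstacle you flag, pinning down the leading constant, is already handled by your own ratio trick.

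\paragraph{What is correct, with one fix.}
Your window is too narrow. In $m$ the terms $a_m$ form a discrete Gaussian of width $\asymp\varepsilon^{-1/2}$ (the paper's factor $e^{-\frac{\sqrt5\phi}{2}u^2}$ with $u=(m-m_0)\sqrt\varepsilon$). So the window must have width $\varepsilon^{-1/2-\delta}$, not $\varepsilon^{-1/2+\delta}$, for the tails to be negligible.

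With that change your real-axis argument is correct. The second difference of $\log a_m$ is at most $2\log q<0$, which gives the tail bound. It is also simpler than the paper's Proposition~\ref{prop:S11_narrow}, since the constant comes from the Rogers--Ramanujan product rather than from an Euler--Maclaurin evaluation of a Gaussian sum.

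Likewise, your coefficientwise treatment of $r_{1,2}$ is cleaner than the paper's. Let $p_{\mathcal{R}_1}(n)$ be the number of gap-$2$ partitions of $n$. You use $0\le r_{1,1}(n)-r_{1,2}(n)\le p_{\mathcal{R}_1}(n)=O(n^{-3/4}e^{2\pi\sqrt{n/15}})$, whereas the paper needs the asymptotic of $S_{1,2}(q)$ and the monotonicity of $r_{1,2}(n)$.

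Without the cone estimate for $S_{1,1}$, however, the argument does not reach the coefficient asymptotic.
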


 \begin{theorem}\label{thm:r2_asymp}
	As $n \to \infty$,
	\[
		r_{2,1}(n) \sim  \frac{3^{\frac14}  \phi^{\frac12}}{5 \pi} n^{-\frac14}e^{2\pi \sqrt{\frac{n}{15}}} \qquad \text{and} \qquad r_{2,2}(n) \sim  \frac{3^{\frac54}  \phi^{\frac12}}{10 \pi} n^{-\frac14}e^{2\pi \sqrt{\frac{n}{15}}}. 
	\]
\end{theorem}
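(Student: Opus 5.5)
The approach is to write down the generating functions for $r_{2,1}(n)$ and $r_{2,2}(n)$ explicitly as the product $P(q):=1/(q,q^4;q^5)_\infty$ times a simple Lambert-type factor, and then extract coefficient asymptotics with Wright's circle method.

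\textbf{Step 1: combinatorial description of $1$- and $2$-hooks.} A $1$-hook is a corner of the Young diagram, so the number of $1$-hooks equals the number of distinct part sizes. For $2$-hooks, take a cell with arm $a$ and leg $l$ satisfying $a+l=1$. The case $a=0$, $l=1$ occurs exactly once for each part size $k$ of multiplicity at least $2$. The case $a=1$, $l=0$ occurs exactly once for each part size $k\ge 2$ whose next smaller distinct part is at most $k-2$ (or is absent).

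For parts congruent to $1,4 \pmod 5$, distinct part sizes always differ by at least $2$. So the second case occurs for every distinct part size $k\ge 4$, and never for $k=1$.

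A partition with parts in $S=\{k: k\equiv 1,4 \pmod 5\}$ that contains $k$ is one such partition with a copy of $k$ adjoined, and its generating function is $q^kP(q)$. Similarly, partitions containing $k$ at least twice have generating function $q^{2k}P(q)$. Hence
\begin{align*}
\sum_{n\ge0} r_{2,1}(n)q^n &= P(q)\,\frac{q+q^4}{1-q^5},\\
\sum_{n\ge0} r_{2,2}(n)q^n &= P(q)\left(\frac{q^4+q^6}{1-q^5}+\frac{q^2+q^8}{1-q^{10}}\right).
\end{align*}

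\textbf{Step 2: behaviour near $q=1$.} Put $q=e^{-z}$ with $\Re(z)>0$ and $z\to0$ in a cone $|\mathrm{Im}(z)|\le \Delta\,\Re(z)$.

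For $P$, I would use the modular transformation of the Rogers--Ramanujan product; equivalently, Euler--Maclaurin summation applied to $\sum_{m}\log(1-e^{-(5m+a)z})$ for $a=1,4$. This gives
\[
P(e^{-z}) = \lambda\, e^{\frac{\pi^2}{15z}}\bigl(1+O(|z|)\bigr),
\]
where the constant $\lambda$ comes from $\Gamma(1/5)\Gamma(4/5)$-type terms, i.e. from $\sin(\pi/5)$. Here $\lambda$ should be $\bigl(2\sin(\pi/5)\bigr)^{-1}$-related; it produces the $\phi^{1/2}$ in the statement.

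The rational factors satisfy $\frac{q+q^4}{1-q^5}\sim \frac{2}{5z}$ and $\frac{q^2+q^8}{1-q^{10}}\sim\frac{1}{5z}$. Therefore the two generating functions behave like $\frac{2}{5z}P$ and $\frac{3}{5z}P$ respectively. This explains the ratio $3/2$ between the two constants in Theorem~\ref{thm:r2_asymp}.

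\textbf{Step 3: circle method.} Write the coefficient as a Cauchy integral over $|q|=e^{-\varepsilon}$, with the saddle point $\varepsilon=\pi/\sqrt{15n}$.

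On the major arc, substitute the expansion from Step 2. The resulting integral is the standard one that evaluates to an $I$-Bessel function, with main term of order
\[
\frac{c}{z}\,e^{\pi^2/(15z)+nz}.
\]
Evaluating this gives the constant times $n^{-1/4}e^{2\pi\sqrt{n/15}}$. Equivalently, one may quote the general Wright-type proposition used by Bringmann--Craig--Males--Ono, which takes exactly this input.

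\textbf{Main obstacle.} The step I expect to be hardest is the minor arc bound, where $z$ lies outside the cone. There one must show that $|P(e^{-z})|\le \exp\bigl(\frac{\pi^2}{15\Re(z)}-\kappa/\Re(z)\bigr)$ for some $\kappa>0$. I would do this by bounding $\log|P(q)|=\Re\sum_{k\in S}\sum_{m\ge1} q^{km}/m$, isolating the $m=1$ term, and comparing $\bigl|\sum_{k\in S}q^k\bigr|$ with $\sum_{k\in S}|q|^k$ via the explicit formula $(q+q^4)/(1-q^5)$. This comparison yields a saving that is uniform away from $q=1$, but the case where $q$ is near other fifth roots of unity needs separate care. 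The polynomial prefactors only cost $O(1/\Re(z))$, which is harmless against the exponential saving.
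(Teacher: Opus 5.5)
Your proposal is correct. Steps 1 and 2 essentially coincide with the paper. The same reading of $1$- and $2$-hooks gives the generating functions of Propositions \ref{gen_S1} and \ref{gen_S2}. The paper then uses Katsurada's modular transformation to get $P(e^{-z})=\frac{\phi^{1/2}}{5^{1/4}}e^{\frac{\pi^2}{15z}-\frac{z}{60}}\bigl(1+O(e^{-\frac{4\pi^2}{5z}})\bigr)$. So your hedged constant is exactly $\lambda=(2\sin(\pi/5))^{-1}=\phi^{1/2}5^{-1/4}$, and the stated constants follow.

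Where you diverge is Step 3. Instead of Wright's circle method, the paper notes that $(1-q)\sum_{n} r_{2,j}(n)q^n$ equals $1/(q^4,q^6;q^5)_\infty$ times a rational function with nonnegative coefficients. Hence $r_{2,j}(n)$ is weakly increasing. It then applies Ingham's Tauberian theorem (Theorem \ref{Tauberian}), which needs only the asymptotic on the real axis and a bound in cones $|y|\le\Delta x$; both are immediate from your Step 2. So the step you flag as the main obstacle, the minor-arc estimate, is bypassed entirely at the cost of a one-line positivity check.

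Your route does go through. Near $y=\pm\frac{2\pi}{5},\pm\frac{4\pi}{5}$ you have $\bigl|\frac{q+q^4}{1-q^5}\bigr|\lesssim\frac{2|\cos(3y/2)|}{5x}$, roughly $\frac{0.618}{5x}$ or $\frac{1.618}{5x}$. Both are strictly below $\frac{2}{5x}$, so the saving near the other fifth roots of unity is uniform. What your method buys is independence from monotonicity, plus a full asymptotic expansion with power-saving error in $n$. Ingham's theorem gives only the leading term.
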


The generating functions for $r_{1,1}(n), r_{1,2}(n), r_{2,1}(n)$ and $r_{2,2}(n)$ are either a form of a Nahm sum or a product of an infinite product and a rational function. Here by a Nahm sum, we mean that a $q$-series $\sum_{n \geq 0} A_n (q)$ with $A_{n+1}(q)/A_{n}(q)$ is a rational function in $q$. Nahm sums have played an important role in partition theory, $q$-series, and modular forms \cite{BMRS, VZ, W, Z}. We employ Ingham's tauberian theorem (see Section \ref{sec2}) as these $t$-hook numbers are weakly increasing and this method only requires a detailed asymptotic behavior near $q=1$. Nonetheless, investigating the asymptotic behavior of Nahm sums requires several non-trivial estimates. Our idea of proofs is similar to that of Bringmann,  Man, Rolen, and Storzer \cite{BMRS}, but we simplify their proofs accordingly and give more necessary details.  More details on how we obtain the necessary asymptotics are given in Section \ref{sec4}.

%Our second example is the little G\"ollnitz identities whose gap conditions are a bit different from those of the G\"ollnitz--Gordon identities, namely parts differ by at least $2$ but no odd parts differing by exactly $2$ are allowed. The first little G\"ollnitz identity states the number of partitions of $n$ with parts differing by $2$ and no odd parts differing by exactly $2$ is equal to the number of partitions of $n$ into parts congruent to $1, 5$ or $6$ modulo $8$. 

Our second example is the little G\"ollnitz identities, which are analogues of the Rogers--Ramanujan identities for modulo 8  \cite{gollnitz}. 
%whose gap conditions are a bit different from those of the G\"ollnitz--Gordon identities, namely parts differ by at least $2$ but no odd parts differing by exactly $2$ are allowed. 
The first little G\"ollnitz identity states that the number of partitions of $n$ with parts differing by $2$ and no odd parts differing by exactly $2$ is equal to the number of partitions of $n$ into parts congruent to $1, 5$ or $6$ modulo $8$.

For a positive integer $t$, let $g_{1,t}(n)$ be the number of $t$-hooks in all partitions of $n$ with gap between parts being at least $2$ while no odd parts differing by exactly $2$ are allowed. We also let $g_{2,t}(n)$ be the number of $t$-hooks in all partitions of $n$ with parts congruent to $1,5, 6$ modulo $8$.  Our second result is the following inequalities. 

\begin{theorem}\label{thm:g_ineq}
	For sufficiently large $n$,
	\[
		g_{1,1}(n) > g_{2,1}(n) \quad \text{and}\quad  g_{1,2}(n) < g_{2,2}(n).
	\]
\end{theorem}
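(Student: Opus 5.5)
Theorem~\ref{thm:g_ineq} will follow the same route as Theorem~\ref{thm:r_ineq}. I will derive asymptotic formulas for $g_{1,1}(n)$, $g_{1,2}(n)$, $g_{2,1}(n)$ and $g_{2,2}(n)$, all of the form $C\, n^{-1/4} e^{\pi\sqrt{n/2}}$ with explicit constants $C$, and then compare the constants.

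The exponent is fixed by the product side $1/(q,q^5,q^6;q^8)_\infty$. Its logarithm behaves like $\frac{3}{8}\cdot\frac{\pi^2}{6z}=\frac{\pi^2}{16 z}$ as $q=e^{-z}\to 1$, so Ingham's theorem gives $e^{2\sqrt{\pi^2 n/16}}=e^{\pi\sqrt{n/2}}$.

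\textbf{Step 1: generating functions.} For a partition $\lambda$:
\begin{itemize}
\item the number of $1$-hooks equals the number of distinct part sizes;
\item the number of $2$-hooks can be read off from the multiplicities and gaps between consecutive distinct parts (a $2$-hook arises from a horizontal or vertical domino at a corner configuration).
\end{itemize}

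For $\mathcal{G}_2$ (parts $\equiv 1,5,6 \pmod 8$), both counts become the product $1/(q,q^5,q^6;q^8)_\infty$ times an explicit rational/Lambert-type factor:
\begin{itemize}
\item for $t=1$: $\sum_{m\equiv 1,5,6 \,(8)} q^m$;
\item for $t=2$: contributions from parts of multiplicity $\ge 2$, and from adjacent distinct parts $a>b$ with $a-b=1$ or related configurations.
\end{itemize}

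For $\mathcal{G}_1$, all parts are distinct with gaps $\ge 2$.
\begin{itemize}
\item Every part then gives a $1$-hook, so $g_{1,1}(n)$ is the total number of parts. Its generating function is $\frac{\partial}{\partial x}\big|_{x=1}$ of the two-variable little G\"ollnitz sum $\sum_n x^n q^{n^2+n}(-q^{-1};q^2)_n/(q^2;q^2)_n$, which is a Nahm-type sum.
\item For $2$-hooks in a gap-$\ge2$ partition, each part $\lambda_i$ contributes one $2$-hook. The exceptions are the smallest part when it equals $1$ (contributing zero) and a vertical domino, which cannot occur since parts are distinct. So $g_{1,2}(n)$ equals the number of parts minus the number of partitions containing the part $1$, up to a correction that I will pin down carefully.
\end{itemize}
This mirrors the relation $r_{1,1}\sim r_{1,2}$ in Theorem~\ref{thm:r1_asymp}, so I expect $g_{1,1}(n)\sim g_{1,2}(n)$.

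\textbf{Step 2: asymptotics near $q=1$.} For the product-side series I will use the modular transformation of $(q^a;q^8)_\infty$, or simply $\Log$ and Euler--Maclaurin, to get the behavior $\lambda z^{-\beta}e^{\pi^2/(16z)}$. The rational factors contribute powers of $z$: for instance $\sum_{m\equiv1,5,6} q^m \sim \frac{3}{8z}$. I also need the $2$-hook constant on the $\mathcal{G}_2$ side, which comes from the multiplicity contributions $\sum q^{2m}/(1-q^m)$-type terms and gives a different constant.

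For the Nahm-sum side, I will write $x$-derivative sums as $\sum_n n\,A_n(q)$. I will then locate the saddle point $n\approx \kappa/z$, where $\kappa$ solves the associated Nahm equation: here $1-w = w^2\cdot(\cdots)$, and the factor $(-q^{-1};q^2)_n$ changes the equation. Near the saddle I will apply a Gaussian approximation, exactly as in the Rogers--Ramanujan case following Bringmann--Man--Rolen--Storzer. The result is $\sum_n nA_n \sim \frac{\kappa}{z}\sum_n A_n$. Since $\sum_n A_n$ equals the product by the little G\"ollnitz identity, we get $G_{1,1}(q)\sim \frac{\kappa}{z}\cdot \frac{1}{(q,q^5,q^6;q^8)_\infty}$, with $\kappa$ playing the role of $\log\phi$ in Theorem~\ref{thm:r1_asymp}. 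I expect $\kappa$ to be a logarithm of an algebraic number such as $\log(1+\sqrt2)$, possibly rescaled.

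Justifying this rigorously is the main obstacle. It requires:
\begin{itemize}
\item uniform estimates of $\Log A_n(e^{-z})$ via Euler--Maclaurin, including the extra factor $(-q^{-1};q^2)_n$;
\item tail bounds away from the saddle;
\item control in a cone $|\arg z|<\pi/2-\delta$, as Ingham's theorem requires.
\end{itemize}
The coefficients are weakly increasing, which can be checked by an injection adding a part, so Ingham applies.

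\textbf{Step 3: comparison.} Ingham converts $G(e^{-z})\sim \lambda z^{-\beta}e^{\gamma/z}$ into coefficient asymptotics with the same exponent $n^{-1/4}$ for all four functions; each carries one extra factor of $z^{-1}$ relative to the product. The inequalities then reduce to $C_{1,1}>C_{2,1}$ and $C_{1,2}<C_{2,2}$. The first amounts to $\kappa>\frac38$, the analogue of $\log\phi>\frac25$. The second compares $\kappa$ with the larger $2$-hook constant on the congruence side, the analogue of $\log\phi<\frac{3}{5}\cdot\frac{\sqrt3}{2}\cdot$etc. Both are numerical checks once the constants are computed.
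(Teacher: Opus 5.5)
Your strategy is the same as the paper's: a saddle-point analysis of the Nahm sum on the $\mathcal G_1$ side, product asymptotics times a rational factor on the $\mathcal G_2$ side, Ingham's theorem, and a comparison of constants. Your side choices are sound. First, $g_{1,2}(n)$ is \emph{exactly} $g_{1,1}(n)$ minus the number of $\mathcal G_1$-partitions of $n$ containing $1$, with no further correction. By the second little G\"ollnitz identity their generating function is $\frac{1}{(q,q^5,q^6;q^8)_\infty}-\frac{1}{(q^2,q^3,q^7;q^8)_\infty}$, which is smaller by a factor of $z$. Second, $\sum_n nA_n\sim\frac{\kappa}{z}\sum_n A_n$ is a legitimate substitute for the paper's Gaussian integral, once the same concentration and cone estimates are in place.

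The gap is that the theorem \emph{is} the comparison of leading constants, and you never determine them. You leave $\kappa$ as ``$\log(1+\sqrt2)$, possibly rescaled'' and do not compute the $\mathcal G_2$ $2$-hook constant. The rescaling is decisive. Since $A_{n+1}/A_n=q^{2n+2}(1+q^{2n-1})/(1-q^{2n+2})\approx w(1+w)/(1-w)$ with $w=q^{2n}$, the peak is at $w^2+2w-1=0$, i.e.\ $w=\sqrt2-1$. Hence $\kappa=\frac12\log(1+\sqrt2)\approx0.441$ and $H_{1,1}(q)\sim\frac{\log(1+\sqrt2)}{2^{5/4}}z^{-1}e^{\pi^2/(16z)}$. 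With the unrescaled value $\log(1+\sqrt2)\approx0.881$, the second inequality would come out reversed.

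Your description of $2$-hooks on the $\mathcal G_2$ side is also backwards. A horizontal $2$-hook sits at the end of the last row of each part size $a$ whose next smaller part size $b$ (or $0$) satisfies $a-b\ge2$. A gap of exactly $1$ is what \emph{removes} this hook. In $\mathcal G_2$ that happens only for $a=1$, and for $a=8k+6$ when $8k+5$ is also present. Add one vertical $2$-hook per part size of multiplicity at least $2$, and differentiate the product. This gives the factor
\[
\frac{q^5+q^6+q^9}{1-q^8}+\frac{q^2+q^{10}-q^{11}+q^{12}}{1-q^{16}}\sim\frac{3}{8z}-\frac{1}{16z}+\frac{3}{16z}=\frac{1}{2z}.
\]
Taking gap-$1$ pairs as the source of hooks, as your text literally says, gives $\frac{1}{4z}$ instead, and the false conclusion $g_{1,2}(n)>g_{2,2}(n)$. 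With the correct constants ($\kappa$ against $\frac38$ and $\frac12$, each multiplied by $2^{-1/4}$), the theorem reduces to $\frac34<\log(1+\sqrt2)<1$.

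Two minor slips. First, $2\sqrt{\pi^2n/16}=\frac{\pi}{2}\sqrt n$, not $\pi\sqrt{n/2}$; this is harmless since the exponent is common to all four. Second, for monotonicity on $\mathcal G_1$, add $1$ to the largest part rather than ``add a part''.
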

They follow from the asymptotic formulas below. 
\begin{theorem}\label{thm:g1_asymp}
	As $n \to \infty$,
	\[
		g_{1,1}(n) \sim g_{1,2}(n) \sim \frac{\log(\sqrt{2}+1) }{2^{\frac54}\pi } n^{-\frac14} e^{\frac{\pi}{2} \sqrt{n}}.
	\]
\end{theorem}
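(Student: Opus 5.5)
The plan is to reduce both $g_{1,1}(n)$ and $g_{1,2}(n)$ to counting parts in little G\"ollnitz partitions, to get the asymptotics of the resulting Nahm-type generating function near $q=1$, and then to apply Ingham's Tauberian theorem from Section~\ref{sec2}.

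\textbf{Combinatorial reduction.} A hook of length $1$ sits at a corner of the Young diagram, so the number of $1$-hooks equals the number of distinct part sizes. For partitions with gaps $\ge 2$ this is just the number of parts. A $2$-hook has arm $1$ and leg $0$, or arm $0$ and leg $1$. The vertical case would require $\lambda_i=\lambda_{i+1}$, which is impossible here. The horizontal case at the end of row $i$ requires $\lambda_i-\lambda_{i+1}\ge 2$, with $\lambda_{k+1}=0$. Hence the number of $2$-hooks equals the number of parts minus $\mathbf 1[\text{the smallest part is }1]$. Therefore
\[
g_{1,1}(n)-g_{1,2}(n)\le \#\{\text{little G\"ollnitz partitions of }n-1\}\le \#\{\text{little G\"ollnitz partitions of }n\},
\]
where the second inequality holds by adding $1$ to the largest part. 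By the first little G\"ollnitz identity and Meinardus-type asymptotics for $1/(q,q^5,q^6;q^8)_\infty$, this count is $\asymp n^{-3/4}e^{\frac{\pi}{2}\sqrt n}$. That is $o(n^{-1/4}e^{\frac{\pi}{2}\sqrt n})$, so it suffices to prove the claimed asymptotic for $g_{1,1}(n)$.

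\textbf{Generating function and heuristic.} Inserting $z$ to mark the number of parts gives
\[
F(z;q)=\sum_{m\ge0} z^m\frac{q^{m^2+m}(-q^{-1};q^2)_m}{(q^2;q^2)_m},\qquad G(q):=\sum_{n\ge0} g_{1,1}(n)q^n=\partial_z F(z;q)\big|_{z=1}=\sum_{m\ge0} m\,A_m(q),
\]
where $A_m(q)$ is the $m$-th summand at $z=1$ and $F(1;q)=1/(q,q^5,q^6;q^8)_\infty=:P(q)$. Set $q=e^{-\varepsilon}$ and $w=q^{2m}$. The ratio $A_{m+1}/A_m$ is approximately $w(1+w)/(1-w)$, and it equals $1$ exactly when $w^2+2w-1=0$, that is $w=\sqrt2-1$. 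So the terms peak at
\[
m_0=\frac{\log(\sqrt2+1)}{2\varepsilon},
\]
and I expect
\[
G(e^{-\varepsilon})\sim \frac{\log(\sqrt2+1)}{2\varepsilon}\,P(e^{-\varepsilon}),\qquad \varepsilon\to0^+.
\]

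\textbf{Making the heuristic rigorous (main obstacle).} I will follow the approach of \cite{BMRS}, simplified as in the Rogers--Ramanujan case treated earlier in the paper. Write $m=m_0+x/\sqrt\varepsilon$. Using Euler--Maclaurin expansions of $\log(q^2;q^2)_m$ and $\log(-q^{-1};q^2)_m$ (dilogarithm main terms), show that $A_m/P$ is approximately a Gaussian in $x$ of total mass $1$; the mass must be $1$ because $\sum_m A_m=P$ exactly. Then $\sum_m (m-m_0)A_m=o(\varepsilon^{-1}P)$, since the Gaussian is centred and has width $\varepsilon^{-1/2}$. The tails $|m-m_0|>\varepsilon^{-1/2-\delta}$ must be bounded uniformly; I would do this by monotonicity of the term ratio on each side of $m_0$, giving geometric decay. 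These uniform tail estimates are the delicate step.

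\textbf{Asymptotics of $P$.} The modular (Gamma-function) asymptotics of $(q^a;q^8)_\infty$ give
\[
P(e^{-\varepsilon})\sim C\,e^{\pi^2/(16\varepsilon)}
\]
for an explicit constant $C$ coming from $\Gamma(1/8)\Gamma(5/8)\Gamma(6/8)$ and powers of $2\pi$ and $8$. The $\varepsilon$-exponents cancel because $\sum_a(\tfrac12-\tfrac a8)=0$.

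\textbf{Tauberian step.} Now $G(e^{-\varepsilon})\sim \lambda\,\varepsilon^{-1}e^{\pi^2/(16\varepsilon)}$ with $\lambda=C\log(\sqrt2+1)/2$. To apply Ingham's theorem we also need $g_{1,1}(n)$ weakly increasing. This follows from the injection that adds $1$ to the largest part. That map preserves the gap condition, since a gap that was $\ge2$ becomes $\ge3$, and it preserves the number of parts. Ingham's theorem then yields an asymptotic of the form $c\,n^{-1/4}e^{\frac{\pi}{2}\sqrt n}$. The final check is that the constants combine to $\log(\sqrt2+1)\,2^{-5/4}\pi^{-1}$; this should follow from the reflection formula $\Gamma(1/8)\Gamma(7/8)=\pi/\sin(\pi/8)$ together with the duplication formula.
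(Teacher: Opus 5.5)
Your real-axis work is sound, and parts of it are neater than the paper's. The two injections (delete the part $1$ when present; add $1$ to the largest part) give $0\le g_{1,1}(n)-g_{1,2}(n)\le \#\{\lambda\in\mathcal G_1:|\lambda|=n\}=O(n^{-3/4}e^{\frac{\pi}{2}\sqrt n})$, so $g_{1,2}$ needs no separate treatment of $H_{1,2}(q)$. On $q=e^{-\varepsilon}$, the identity $\sum_m A_m=P$ together with $A_m>0$ gives $G(e^{-\varepsilon})\sim\frac{\log(\sqrt2+1)}{2\varepsilon}P(e^{-\varepsilon})$ by concentration alone. Since $P(e^{-\varepsilon})\sim 2^{-1/4}e^{\pi^2/(16\varepsilon)}$ (use $\Gamma(\frac18)\Gamma(\frac58)=2^{3/4}\sqrt{\pi}\,\Gamma(\frac14)$ and $\Gamma(\frac14)\Gamma(\frac34)=\sqrt2\,\pi$), this reproduces the constant of Proposition~\ref{prop:H11_narrow}.

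The gap is in the Tauberian step. Besides monotonicity and the real-axis asymptotic, Theorem~\ref{Tauberian} requires the bound $G(e^{-z})\ll |z|^{-1}e^{\pi^2/(16|z|)}$ as $z=x+iy\to 0$ in every cone $|y|\le \Delta x$, and your proposal never addresses it. It does not come for free. Positivity only gives $|G(e^{-z})|\le G(e^{-x})\asymp x^{-1}e^{\pi^2/(16x)}$, which overshoots by the factor $\exp(\frac{\pi^2}{16}(\frac1x-\frac1{|z|}))$; this is unbounded once $y/x^{3/2}\to\infty$. Nor can the hypothesis be dropped. Multiplying the coefficients by $1+\frac12\sin\sqrt n$ keeps them positive and eventually increasing and leaves the real-axis asymptotic unchanged, because the oscillation averages out over the saddle-point window of length $\asymp n^{3/4}$. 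It nonetheless destroys the coefficient asymptotic.

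Your mass-one device does not survive off the axis either. Write $z=\varepsilon(1+iy)$ as in the paper, and take $y\neq 0$ small and fixed. The terms $A_m(e^{-z})$ near the peak then have modulus about $e^{(\pi^2/16+s(y))/\varepsilon}$, where $s(y)=(\frac12\log^2(\sqrt2+1)-\frac{\pi^2}{16})y^2+O(y^4)$ by Lemma~\ref{lem:H_s}. By contrast, $|P(e^{-z})|\approx e^{\pi^2/(16\varepsilon(1+y^2))}$. So the terms are exponentially larger than their sum, and $\sum_m A_m=P$ tells you nothing about $\sum_m (m-m_0)A_m(e^{-z})$.

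Controlling the Nahm sum for complex $z$ is what the paper's Section~\ref{sec:H_asymp} is devoted to:
\begin{itemize}
\item Proposition~\ref{prop:H_wider} expands $A_m(e^{-z})$ via Lemma~\ref{lem:Zagier_asym} with the complex parameter $w=(\sqrt2-1)^{1+iy}$.
\item Lemma~\ref{lem:H_s} shows $s(y)<0$ for $y\neq0$.
\item Proposition~\ref{prop:H11_asymp} combines this with the complex Euler--Maclaurin evaluation near $y=0$ and the tail bound of Proposition~\ref{prop:G_far}.
\end{itemize}
You need an off-axis argument of this kind before Theorem~\ref{Tauberian} can be invoked. As written, your proof supplies only the real-axis input.

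When you add that argument, note that the target is $e^{\pi^2/(16|z|)}$, not $e^{\pi^2/(16\,\Re z)}$; the latter is automatic from positivity. Since $\frac12\log^2(\sqrt2+1)>\frac{\pi^2}{32}$, the same expansion shows that $\sum_m |mA_m(e^{-z})|$ exceeds $|z|^{-1}e^{\pi^2/(16|z|)}$ by a factor $e^{(c+o(1))y^2/\varepsilon}$, where $c=\frac12\log^2(\sqrt2+1)-\frac{\pi^2}{32}>0$. Hence, once $y^2/\varepsilon\to\infty$, termwise absolute bounds cannot give the cone estimate. The cancellation among the complex terms has to be captured, for instance by evaluating the sum over $m$ through a contour shift to the complex saddle $m=\frac{\log(\sqrt2+1)}{2z}$.
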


\begin{theorem}\label{thm:g2_asymp}
	As $n \to \infty$,
	\[
		g_{2,1}(n) \sim \frac{3}{2^{\frac{13}{4}}\pi} n^{-\frac14} e^{\frac{\pi}{2} \sqrt{n}} \quad \text{and} \quad g_{2,2}(n) \sim  \frac{1}{2^{\frac{5}{4}}\pi} n^{-\frac14} e^{\frac{\pi}{2} \sqrt{n}} .
	\]
\end{theorem}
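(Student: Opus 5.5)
The plan is to write down explicit generating functions for $g_{2,1}(n)$ and $g_{2,2}(n)$, each as the infinite product
\[
G(q):=\frac{1}{(q,q^5,q^6;q^8)_\infty}=(-q,-q^2,-q^4;q^4)_\infty
\]
times a simple $q$-series. I then find the behaviour of both factors as $q=e^{-t}$, $t\to0^+$, and conclude with Ingham's tauberian theorem from Section \ref{sec2}.

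\textbf{Combinatorial step.} Let $S=\{k\ge1:\ k\equiv1,5,6 \pmod 8\}$ and let $m_k$ denote the multiplicity of the part $k$.

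For $t=1$: a $1$-hook is exactly a corner of the Young diagram. Corners are in bijection with the distinct part sizes. Marking one occurrence of a part $k\in S$ gives
\[
\sum_{n\ge0} g_{2,1}(n)q^n = G(q)\sum_{k\in S} q^k = G(q)\,\frac{q+q^5+q^6}{1-q^8}.
\]

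For $t=2$: a $2$-hook of $\lambda$ is one of two kinds.
\begin{itemize}
\item A vertical domino at the end of a block of equal parts $k$ with $m_k\ge2$.
\item A horizontal domino at the end of the last row of the block of $k$, where $k\ge2$ and $m_{k-1}=0$.
\end{itemize}
Forcing $m_k\ge 2$ contributes a factor $q^{2k}$. Forcing $m_k\ge1$ and $m_{k-1}=0$ contributes $q^k(1-q^{k-1})$ when $k-1\in S$, and $q^k$ otherwise. In $S$, the only $k$ with $k-1\in S$ are those with $k\equiv 6\pmod 8$. Hence
\[
\sum_{n\ge0} g_{2,2}(n)q^n = G(q)\Bigl(\frac{q^2+q^{10}+q^{12}}{1-q^{16}}+\frac{q^5+q^9}{1-q^8}+\sum_{j\ge0}q^{8j+6}\bigl(1-q^{8j+5}\bigr)\Bigr).
\]
The term $k=1$ is excluded from the horizontal count, which is why the second sum starts at $q^5+q^9$.

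\textbf{Asymptotics near $q=1$.} With $q=e^{-t}$, the rational factors satisfy
\[
\sum_{k\in S}q^k\sim \frac{3}{8t}.
\]
For the $t=2$ factor, the three pieces behave like $\frac{3}{16t}$, $\frac{2}{8t}$ and $\frac{1}{8t}-\frac{1}{16t}$, which sum to $\frac{1}{2t}$.

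For the product, I use the factorization $(-q,-q^2,-q^4;q^4)_\infty$ or work directly with $\prod_{a\in\{1,5,6\}}(q^a;q^8)_\infty^{-1}$. Either the modular transformation of the Dedekind eta function, or the standard estimate
\[
\log (q^a;q^N)_\infty^{-1} = \frac{\pi^2}{6Nt}+\Bigl(\tfrac a N-\tfrac12\Bigr)\log(Nt)+\log\frac{\Gamma(a/N)}{\sqrt{2\pi}}+o(1)
\]
(Euler--Maclaurin or Mellin), should give
\[
G(e^{-t})\sim C\,t^{1/2}e^{\pi^2/(16t)}
\]
with an explicit constant $C$. Using the $(-q^a;q^4)_\infty$ form is cleaner, since each factor has the simple asymptotic $2^{1/2-a/4}e^{\pi^2/(48t)}$. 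This gives $C$ without Gamma-function bookkeeping. The exponent $\pi^2/(16t)$ is consistent with the main term $e^{\frac{\pi}{2}\sqrt n}$. As a check, the two rational factors have ratio $(1/2)/(3/8)=4/3$, which matches the ratio of the constants in the theorem.

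\textbf{Tauberian step.} Both coefficient sequences are weakly increasing in $n$: adding a part $1$ maps partitions of $n$ injectively into partitions of $n+1$. I will confirm that this injection does not decrease the number of $1$- or $2$-hooks, or else pass to $(1-q)$ times the series. Ingham's theorem says that $F(e^{-t})\sim\lambda t^{\alpha}e^{\gamma/t}$ implies
\[
a(n)\sim\frac{\lambda\gamma^{\alpha/2+1/4}}{2\sqrt\pi\, n^{\alpha/2+3/4}}\,e^{2\sqrt{\gamma n}}.
\]
I apply it with $\gamma=\pi^2/16$, $\alpha=-1/2$, and $\lambda=\frac38C$ for $g_{2,1}$ and $\lambda=\frac12 C$ for $g_{2,2}$. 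This yields the stated $n^{-1/4}e^{\frac{\pi}{2}\sqrt n}$ asymptotics.

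\textbf{Main obstacle.} I expect the main obstacle to be the hypotheses of Ingham's theorem in the form used in the paper. If the version requires the asymptotic in a cone $|\arg z|<\Delta$ rather than only on the real axis, this needs a uniform estimate there. For the product this follows from modularity. For the rational factors it is immediate, because they are explicit rational functions, plus the single sum $\sum_j q^{8j+6}(1-q^{8j+5})$, which is also rational. Pinning down the constant $C$ exactly is routine but error-prone. I would cross-check it against the Ballantine--Burson--Craig--Folsom--Wen style computation for distinct parts.
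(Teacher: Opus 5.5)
Your overall route is the paper's: write $H_{2,1}$ and $H_{2,2}$ as $G(q)=1/(q,q^5,q^6;q^8)_\infty$ times an explicit rational factor, use the asymptotics $\frac{3}{8z}$ and $\frac{1}{2z}$ for those factors, and apply Ingham. Several of your steps check out:
\begin{itemize}
\item Your marking argument reproduces Proposition~\ref{gen_H2} exactly. Indeed $\sum_{j\ge0}q^{8j+6}(1-q^{8j+5})=\frac{q^6}{1-q^8}-\frac{q^{11}}{1-q^{16}}$, which is where the paper's $-q^{11}$ comes from.
\item The injection ``adjoin a part $1$'' does prove monotonicity. Since $2\notin S$, no horizontal domino is lost, and $m_1$ only grows.
\item Treating $G$ through $(-q,-q^2,-q^4;q^4)_\infty$ is more direct than the paper. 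The paper obtains the same asymptotic by rerunning the Nahm-sum analysis of Proposition~\ref{prop:H11_asymp} on the little G\"ollnitz sum.
\end{itemize}

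The real problem is the claim $G(e^{-t})\sim C\,t^{1/2}e^{\pi^2/(16t)}$. It is false, and it makes your final step wrong as written. Your own factorwise estimate $(-q^a;q^4)_\infty\sim 2^{\frac12-\frac a4}e^{\pi^2/(48t)}$ for $a=1,2,4$ gives $G(e^{-t})\sim 2^{-1/4}e^{\pi^2/(16t)}$, with \emph{no} power of $t$. Equivalently, in your Gamma-function formula the exponents $\frac a8-\frac12$ for $a=1,5,6$ sum to $0$. The paper uses the same asymptotic $2^{-1/4}e^{\pi^2/(16z)}$.

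With your choice $\alpha=-\frac12$, Ingham's formula produces $n^{-(\alpha/2+3/4)}=n^{-1/2}$, not the $n^{-1/4}$ you assert. Also, because you never fix $C$, the constants are not actually verified. Your ratio check $4/3$ cannot detect this, since it is insensitive to both $C$ and the power of $t$.

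The correct input is $\alpha=-1$, with $\lambda=\frac38\cdot 2^{-1/4}=3\cdot2^{-13/4}$ for $g_{2,1}$ and $\lambda=\frac12\cdot2^{-1/4}=2^{-5/4}$ for $g_{2,2}$. Since $\gamma^{\alpha/2+1/4}=(\pi^2/16)^{-1/4}=2/\sqrt{\pi}$, Ingham then gives $\frac{\lambda}{\pi}\,n^{-1/4}e^{\frac{\pi}{2}\sqrt n}$, which is exactly the statement.

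The cone hypothesis is unproblematic, as you expected. The Euler--Maclaurin estimate for each factor $(-q^a;q^4)_\infty$ holds uniformly for $|\arg z|\le\theta<\frac{\pi}{2}$, and $|e^{\pi^2/(16z)}|\le e^{\pi^2/(16|z|)}$.
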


The corresponding generating functions are also either Nahm sums or a product of an infinite product and a rational function. As the generating function is more complicated, there are additional technical difficulties. However, the main idea of the proofs remains the same as before.

The rest of this paper is organized as follows. In Section~\ref{sec2}, we recall some background and necessary results from the literature. In Sections~\ref{sec3} and \ref{sec4}, we give generating functions for the first Rogers--Ramanujan partitions and their asymptotics, respectively.  The first little G\"ollnitz identity is studied in Sections~\ref{sec5} and \ref{sec6}. We then offer some  comments in the final section.

%%%%%%%%%%%%%%%%%%%%%%%%%%%%%%%%%%%%%%%%%%%%%
%          Section.   %
%%%%%%%%%%%%%%%%%%%%%%%%%%%%%%%%%%%%%%%%%%%%%
\section{Preliminaries}\label{sec2}

%We now borrow some notations from Andrews' encyclopedia \cite{andrews_book}. 
For complex variables $a$, $a_1,a_2,\ldots,a_k$, $q$ and $n \in \mathbb{Z}_{\ge 0} \cup \{\infty\}$, the conventional $q$-Pochhammer symbol is defined as
\begin{align*}
(a;q)_n := \prod\limits_{i=0}^{n-1}(1-aq^i)\quad\text{and}\quad (a_1,a_2,\ldots,a_k;q)_n := \prod\limits_{i=1}^{k}(a_i;q)_n.
\end{align*}
Here and throughout the paper, we assume $|q|<1$.

We need to derive an asymptotic formula for $1/(q;q)_n$ and this can be achieved using two lemmas %Lemmas 2.1 and 2.2 
in \cite[Lemmas 2.1, 2.2]{BMRS}. In particular, Lemma 2.2 in \cite{BMRS} is a complex variable extension of Zagier's previous result \cite[Lemma 2.1]{GZ} for $\zeta=1$. 

Throughout this paper, we set $q=e^{-z}$ where $z = \varepsilon (1+iy)$ with $\varepsilon >0$ and $y \in \mathbb{R}$.

\begin{lemma}[{\cite[Lemma 2.2]{BMRS}}]\label{lem:Zagier_asym}
	Let $z, w \in \mathbb{C}$ with ${\rm Re}(z) >0$, $|w|<1$, and $\nu \in \mathbb{C}$ with $\nu z = o(1)$. Then
	\[
		\Log \left( (we^{-\nu z} q;q)_\infty \right) = - \Li_2 (w) \frac{1}{z} - \left( \nu + \frac{1}{2} \right) \Log(1-w) - \frac{\nu^2 z}{2} \frac{w}{1-w} + \psi_w(\nu, z),
	\]
	where $\psi_w (\nu, z)$, for $R \in \mathbb{N}$, has an asymptotic expansion as $z\to 0$ with $\Re(z)>0$
	\begin{equation}\label{eq:psi}
		\psi_w(\nu, z) = -\sum_{r=2}^{R-1} \left( B_r(-\nu)-\delta_{r,2} \nu^2 \right) \Li_{2-r}(w) \frac{z^{r-1}}{r!} + O\left( z^{R-1} \right)
	\end{equation}
	with $B_r(x)$ the Bernoulli polynomial and $\delta_{i,j}$ the Kronecker delta symbol. In particular, for every $n \in \mathbb{N}_0$, the coefficient of $\nu^n$ is $O\left(z^{\frac{2n}{3}} \right)$.
\end{lemma}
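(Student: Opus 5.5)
The plan is to expand the logarithm as a series and exchange summations. Write
\[
\Log\left((we^{-\nu z}q;q)_\infty\right)=\sum_{m\ge0}\Log\left(1-we^{-(m+1+\nu)z}\right)=-\sum_{k\ge1}\frac{w^k}{k}\sum_{m\ge0}e^{-k(m+1+\nu)z}.
\]
This is legitimate because $|w|<1$ and $\Re(z)>0$; since $\nu z=o(1)$, the quantity $|we^{-\nu z}|$ stays below $1$ for small $z$. The inner geometric sum equals $e^{-k\nu z}e^{-kz}/(1-e^{-kz})$.

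Next we use the Bernoulli polynomial generating function $te^{xt}/(e^t-1)=\sum_r B_r(x)t^r/r!$ with $t=kz$ and $x=-\nu$:
\[
\frac{e^{-k\nu z}}{e^{kz}-1}=\sum_{r\ge0}B_r(-\nu)\frac{(kz)^{r-1}}{r!}.
\]
Summing against $-w^k/k$ term by term gives $-\sum_r B_r(-\nu)\Li_{2-r}(w)z^{r-1}/r!$. We then read off the terms:
\begin{itemize}
\item $r=0$ gives $-\Li_2(w)/z$.
\item $r=1$ gives $-B_1(-\nu)\Li_1(w)=-(-\nu-\tfrac12)(-\Log(1-w))=-(\nu+\tfrac12)\Log(1-w)$.
\item $r=2$: since $B_2(-\nu)=\nu^2+\nu+\tfrac16$, the $\nu^2$ part gives $-\nu^2 z\Li_0(w)/2=-\tfrac{\nu^2z}{2}\tfrac{w}{1-w}$. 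The remainder $-(B_2(-\nu)-\nu^2)\Li_0(w)z/2$ is the $r=2$ term of $\psi_w$, which accounts for the Kronecker delta.
\end{itemize}
This matches the claimed formula.

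The main obstacle is rigor: the Bernoulli series converges only for $|kz|<2\pi$, so the interchange is not valid for all $k$. I would handle it by the Euler--Maclaurin / Mellin approach, as in Zagier's lemma. Set
\[
f(x)=\frac{e^{-\nu x}}{e^{x}-1}-\frac1x+\Big(\nu+\tfrac12\Big)-\cdots,
\]
the truncation of the Laurent expansion at order $R$. Then rewrite $\sum_k \frac{w^k}{k}g(kz)$ with $g(t)=e^{-\nu t}/(e^t-1)$, splitting $g$ into its Taylor polynomial up to degree $R-2$ plus a remainder $h(t)=O(t^{R-1})$ near $0$. The polynomial part yields the main terms exactly, since $\sum_k w^k k^{r-1}$ converges for $|w|<1$. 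For the remainder, bound $|h(kz)|\le C|kz|^{R-1}$ for $|kz|$ small and $|h(kz)|\le C'(|kz|^{R-1}+\ldots)$ otherwise. This uses $\Re(z)>0$ and $\nu z=o(1)$, so that $e^{-\nu kz}$ grows at most like $e^{o(1)k}$, which is absorbed by the geometric decay $|w|^k$. The result is
\[
\sum_k |w|^k k^{R-2}|z|^{R-1}=O(z^{R-1}).
\]
The uniformity in $\nu$ is the delicate point. I would first try restricting to a cone $|\arg z|<\pi/2-\delta$, where $e^{kz}-1$ has no zeros away from $0$. If necessary I would fall back to the more general direction $\Re(z)>0$ via the identity $1/(e^t-1)$ for $\Re t>0$, controlling the remainder by $|w|^k e^{|\nu z|k}$.

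Finally, for the claim that the coefficient of $\nu^n$ is $O(z^{2n/3})$: $\nu$ enters only through the combinations $\nu z$ and $\nu^2 z$, and $B_r(-\nu)$ has degree $r$ in $\nu$ with coefficient $z^{r-1}$. So the coefficient of $\nu^n$ in the $r$-th term ($r\ge n$, and $r\ge3$ when $n=2$ after the delta subtraction) is $O(z^{r-1})$. The worst cases are $n=1$ with $r=2$, giving $z^1\le z^{2/3}$; $n=2$ with $r=3$, giving $z^{2}$; and in general $z^{n-1}$ for $n\ge3$, which is $O(z^{2n/3})$ since $n-1\ge 2n/3$ once $n\ge3$. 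This is a straightforward check once the expansion is justified.
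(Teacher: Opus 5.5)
\textbf{Verdict: there is a genuine gap.} The formal derivation is correct. This covers the Lambert series $-\sum_{k\ge1}\frac{w^k}{k}\frac{e^{-k\nu z}}{e^{kz}-1}$, the Bernoulli generating function, and the $r=0,1,2$ bookkeeping including the $\delta_{r,2}\nu^2$ correction. For \emph{fixed} $\nu$ and $z$ in a cone, your Taylor-remainder argument does prove the displayed expansion. The paper itself gives no proof; it imports the lemma from \cite{BMRS}.

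\textbf{First gap: the large-$\nu$ content.} What is missing is the part the paper actually uses, with $\nu=u/\sqrt{z}$ as large as $\varepsilon^{-2/3+\delta}$.

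\emph{Why the remainder is not uniform in $\nu$.} The remainder $h(t)=\sum_{r\ge R}B_r(-\nu)t^{r-1}/r!$ has coefficients of size $|\nu|^r$. So at best $|h(kz)|\ll(1+|\nu|)^R|kz|^{R-1}$, and only for $|k\nu z|\lesssim 1$. This is sharp: for $\nu\to\infty$ with $\nu z\to0$, the true remainder is dominated by the $r=R$ term, of size $|\nu|^R|z|^{R-1}$. The hypothesis $\nu z=o(1)$ does not rescue uniformity. Hence the $O(z^{R-1})$ is only a locally uniform, fixed-$\nu$ statement.

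\emph{Why the last sentence does not follow as written.} You cannot get the $\nu^n$-coefficient bounds by reading them off the truncated sum, because the error term's $\nu$-dependence is uncontrolled.

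\emph{The fix: use analyticity in $\nu$.} The variable $\nu$ enters only through $e^{-k\nu z}$, and the double series converges absolutely for $|\nu z|<\log(1/|w|)$. So the exact Taylor coefficient is
\[
[\nu^n]\,\Log\big((we^{-\nu z}q;q)_\infty\big)=-\frac{(-z)^n}{n!}\sum_{k\ge1}\frac{w^k k^{n-1}}{e^{kz}-1}.
\]
This is a $\nu$-free sum, so your argument applies with $\nu=0$ and gives
\[
-\frac{(-1)^n}{n!}\Li_{2-n}(w)z^{n-1}+\frac{(-1)^n}{2\,n!}\Li_{1-n}(w)z^{n}+O(z^{n+1}).
\]
After removing the three explicit terms, this leaves $O(z)$, $O(z)$, $O(z^2)$ for $n=0,1,2$, and $O(z^{n-1})$ for $n\ge3$. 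Hence every coefficient is $O(z^{2n/3})$. Alternatively, Cauchy's estimate on $|\nu|=1$, applied to your locally uniform fixed-$\nu$ expansion, gives the same bounds.

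The exact formula has a bonus. In a cone, $|e^{kz}-1|\gg k|z|$, so the implied constants grow at most geometrically in $n$. The paper tacitly needs this when it sums the $\nu^n$-terms over all $n$.

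\textbf{Second gap: the cone restriction is necessary.} Your fallback to the whole half-plane $\Re(z)>0$ cannot work, because the expansion is false there. Take $\nu=0$, $w\in(0,1)$ and $z=\varepsilon+2\pi i/N$. The terms $k\in N\mathbb{N}$ of the Lambert series contribute about $-\Li_2(w^N)/(N^2\varepsilon)$. This is unbounded if, for example, $\varepsilon=e^{-N^2}$, while the right-hand side is $O(N)$. So a restriction such as $|\arg z|\le\frac{\pi}{2}-\delta$ (or $\Re(z)\ge|z|^A$) must be built into the statement. This costs nothing here, since the paper only applies the lemma with $z=\varepsilon(1+iy)$, $y\ll 1$.
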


\begin{lemma}[{\cite[Lemma 2.1]{BMRS}}]\label{lem:eta_asym}
Suppose that $y \ll \varepsilon^{-\frac{1}{2} + \delta}$ for some $\delta>0$. Then we have that as $z \to 0$, 
\[
	- \Log \left( (q;q)_\infty \right) = \frac{\pi^{2}}{6z} + \frac{1}{2} \Log\left( \frac{z}{2\pi} \right) - \frac{z}{24} + O(z^L)
\]
for any $L \in \mathbb{N}$. 
\end{lemma}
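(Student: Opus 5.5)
The plan is to derive the formula from the modular transformation of the Dedekind eta function, $\eta(\tau)=e^{\pi i\tau/12}\prod_{n\ge1}(1-e^{2\pi i n\tau})$, which satisfies $\eta(-1/\tau)=\sqrt{-i\tau}\,\eta(\tau)$ for $\Im(\tau)>0$ (principal branch of the square root).

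First I set $q=e^{-z}=e^{2\pi i\tau}$ with $\tau=\frac{iz}{2\pi}$, so that $\Re(z)>0$ corresponds to $\Im(\tau)>0$. Then
\[
(q;q)_\infty=e^{z/24}\eta(\tau), \qquad -i\tau=\frac{z}{2\pi}, \qquad -\frac{1}{\tau}=\frac{2\pi i}{z}.
\]
Writing $\widetilde q:=e^{2\pi i(-1/\tau)}=e^{-4\pi^2/z}$, the definition of $\eta$ gives $\eta(-1/\tau)=e^{-\pi^2/(6z)}(\widetilde q;\widetilde q)_\infty$. Combining this with the transformation law yields the exact identity
\[
-\Log\left((q;q)_\infty\right)=\frac{\pi^2}{6z}+\frac12\Log\left(\frac{z}{2\pi}\right)-\frac{z}{24}-\Log\left((\widetilde q;\widetilde q)_\infty\right).
\]

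For the branches, I would check this identity for real $z>0$, where every term is real. Both sides are analytic in the right half-plane: the left side via the convergent series $\sum_{n\ge1}\Li_1(q^n)$, and $\Log(z/2\pi)$ because $z/2\pi$ avoids the branch cut there. The identity therefore extends to all of $\Re(z)>0$ by analytic continuation, with all logarithms principal.

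It remains to show that $\Log((\widetilde q;\widetilde q)_\infty)=O(z^L)$, and this is the only place the hypothesis on $y$ enters. We have $|\widetilde q|=\exp(-4\pi^2\,\Re(1/z))$ and
\[
\Re\left(\frac1z\right)=\frac{1}{\varepsilon(1+y^2)}.
\]
The assumption $y\ll\varepsilon^{-\frac12+\delta}$ gives $\varepsilon(1+y^2)\ll\varepsilon+\varepsilon^{2\delta}\ll\varepsilon^{2\delta}$ as $\varepsilon\to0$. Hence $\Re(1/z)\gg\varepsilon^{-2\delta}$, and so $|\widetilde q|\le\exp(-c\,\varepsilon^{-2\delta})\to0$. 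For $|\widetilde q|\le\frac12$, the series $-\Log((\widetilde q;\widetilde q)_\infty)=\sum_{n\ge1}\Li_1(\widetilde q^{\,n})$ is $O(|\widetilde q|)$. Finally, $|z|\ge\varepsilon$, so $\exp(-c\,\varepsilon^{-2\delta})=O(\varepsilon^L)=O(|z|^L)$ for every $L\in\mathbb N$.

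I do not expect a real obstacle here. The points needing care are the branch bookkeeping and the observation that the constraint on $y$ is exactly what keeps the dual nome $\widetilde q$ exponentially small. Without this constraint, $z$ could approach $0$ nearly tangentially to the imaginary axis and $\widetilde q$ would no longer be small.
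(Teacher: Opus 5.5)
Your proof is correct and is the standard argument: the paper gives no proof of its own but quotes \cite[Lemma 2.1]{BMRS}, which rests on the same $\eta$-transformation identity $(q;q)_\infty=\sqrt{2\pi/z}\,e^{-\frac{\pi^2}{6z}+\frac{z}{24}}(\widetilde q;\widetilde q)_\infty$ with $\widetilde q=e^{-4\pi^2/z}$, combined with $\Re(1/z)=\frac{1}{\varepsilon(1+y^2)}\to\infty$ under the hypothesis on $y$. One small slip: the bound $\varepsilon+\varepsilon^{2\delta}\ll\varepsilon^{2\delta}$ needs $\delta\le\frac12$, but you may assume this without loss of generality, since the hypothesis for a given $\delta$ implies it for every smaller $\delta$.
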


Next, we recall the Euler-Maclaurin summation formula.
\begin{proposition}[{\cite[Proposition 2.3]{BMRS}}]\label{EM}
	Let $a, z \in \mathbb{C}$. If $f$ is holomorphic and has rapid decay in $\{tz+a: t \in \mathbb{R}_0^{+}\}$. Then we have, for all $R \in \mathbb{N}$,
	\begin{align*}
		\sum_{n \geq 0} f(nz+a) = \frac{1}{z} \int_{a}^{a+z\infty} f(x) dx + \frac{f(a)}{2} - \sum_{r=1}^R \frac{B_{2r} z^{2r-1}}{(2r)!} f^{(2r-1)(a)} + O(1)z^{2R} \int_{a}^{a+z\infty} \left| f^{(2R+1)}(x)\right| dx,
	\end{align*}
    where $B_r$ is the Bernoulli number.
\end{proposition}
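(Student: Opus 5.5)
The plan is to reduce the statement to the classical real-variable Euler--Maclaurin formula by restricting $f$ to the ray $\{tz+a : t\ge 0\}$. Define $g(t):=f(tz+a)$ for $t\in\mathbb{R}_0^{+}$. Since $f$ is holomorphic, $g$ is smooth and, by the chain rule for holomorphic functions,
\[
g^{(k)}(t)=z^k f^{(k)}(tz+a)\qquad\text{for all } k\ge 0 .
\]
So every quantity in the claimed formula can be rewritten in terms of $g$ on the half-line.

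First I apply the classical Euler--Maclaurin formula on $[0,N]$ with the periodic Bernoulli function $\widetilde B_{2R+1}(t)=B_{2R+1}(t-\lfloor t\rfloor)$:
\[
\sum_{n=0}^{N} g(n)=\int_0^N g(t)\,dt+\frac{g(0)+g(N)}{2}+\sum_{r=1}^{R}\frac{B_{2r}}{(2r)!}\left(g^{(2r-1)}(N)-g^{(2r-1)}(0)\right)+\frac{1}{(2R+1)!}\int_0^N \widetilde B_{2R+1}(t)\,g^{(2R+1)}(t)\,dt .
\]
Next I let $N\to\infty$. This requires $g(N)\to 0$ and $g^{(2r-1)}(N)\to0$, together with absolute convergence of the sum and of the integrals. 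I read "rapid decay on the ray" as holding in a small neighbourhood of the ray, or equivalently for $f$ and all of its derivatives along the ray. If only $f$ is assumed to decay, Cauchy's estimates on small discs centred at points of the ray transfer the rapid decay to $f^{(k)}$. I expect this to be the only genuinely delicate point, since everything else is bookkeeping.

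Finally I translate back to $f$. The substitution $x=tz+a$ gives
\[
\int_0^\infty g(t)\,dt=\frac1z\int_a^{a+z\infty}f(x)\,dx .
\]
The constant term is $g(0)/2=f(a)/2$. The derivative terms become $-\sum_{r=1}^{R}\frac{B_{2r}}{(2r)!}z^{2r-1}f^{(2r-1)}(a)$, which matches the stated signs. For the remainder, $\widetilde B_{2R+1}$ is bounded, so the remainder is at most
\[
C_R\,|z|^{2R+1}\int_0^\infty \bigl|f^{(2R+1)}(tz+a)\bigr|\,dt = C_R\,|z|^{2R}\int_a^{a+z\infty}\bigl|f^{(2R+1)}(x)\bigr|\,|dx| ,
\]
using $|dx|=|z|\,dt$. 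Writing $|z|^{2R}=O(1)\,z^{2R}$ (the unimodular factor is absorbed into the $O(1)$) gives exactly the error term in the statement.
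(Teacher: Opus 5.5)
The paper does not prove this proposition; it quotes it from \cite[Proposition 2.3]{BMRS}. Your argument is correct and is the standard derivation: apply the classical Euler--Maclaurin formula to $g(t)=f(tz+a)$, let $N\to\infty$, and undo the substitution $x=tz+a$. The signs, the factor $1/z$, and the size $|z|^{2R+1}\int_0^\infty|f^{(2R+1)}(tz+a)|\,dt$ of the remainder all match the statement.

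The one point to tighten is your remark that the two readings of the hypothesis are ``equivalent''. They are not. Rapid decay of $f$ on the ray alone does not control its derivatives: $e^{-x}\sin(e^{x^2})$ on $[0,\infty)$ decays rapidly but has unbounded derivative. So the Cauchy-estimate step needs decay on discs of fixed radius about the ray, which is your neighbourhood reading. Alternatively, you can assume directly that all derivatives decay along the ray; either assumption makes the argument go through.
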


We also recall the Tauberian theorem of Ingham.
\begin{theorem}[{\cite[Theorem 1.1]{BJM}}]\label{Tauberian}
	Suppose that $B(q)=\sum_{n \geq 0} b_n q^n$ is a power series with non-negative real coefficients and radius of convergence at least one. If $\alpha$, $\beta$, and $\gamma$ are real numbers with $\gamma > 0$ such that
	\[
		B\left(e^{-t}\right) \sim \alpha t^{\beta} e^{\frac{\gamma}{t}} \quad \text{as } t \to 0^+, \qquad B\left(e^{-z}\right) \ll |z|^{\beta} e^{\frac{\gamma}{|z|}} \quad \text{as } z \to 0,
	\]	
	with $z=x+iy$, where $x, y \in \mathbb{R}$ with $x>0$ and in each region of the form $y \leq \Delta x$ for $\Delta>0$, then
	\[
		\sum_{n=0}^N b_n \sim \frac{\alpha \gamma^{\frac{\beta}{2}-\frac14}}{2 \sqrt{\pi} N^{\frac{\beta}{2}+\frac14}} e^{2\sqrt{\gamma N}} \qquad \text{as } N \to \infty.
	\]
	Furthermore, if $\{b_n\}$ is weakly increasing, then
	\[
		b_n \sim \frac{\alpha \gamma^{\frac{\beta}{2}+\frac14}}{2 \sqrt{\pi} n^{\frac{\beta}{2}+\frac34}} e^{2\sqrt{\gamma n}} \qquad \text{as } n \to \infty.
	\] 
\end{theorem}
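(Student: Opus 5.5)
The plan is to follow Ingham's original Tauberian strategy. The first step is to reduce to a smoothed counting function whose Laplace transform is controlled by the hypotheses. Set $A(N)=\sum_{n\le N} b_n$. Then
\[
\frac{B(e^{-z})}{1-e^{-z}}=\sum_{N\ge0}A(N)e^{-Nz}, \qquad \frac{B(e^{-t})}{1-e^{-t}}\sim \alpha t^{\beta-1}e^{\gamma/t} \text{ as } t\to0^+ .
\]
Rather than invert this directly, I would apply Cauchy's formula to a smoothed sum such as $A_1(N)=\int_0^N A(u)\,du$, or equivalently to the generating function multiplied by $z^{-1}$. This extra power of $z$ makes the contour integral absolutely convergent along vertical lines. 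Positivity of the $b_n$ then lets one pass from $A_1$ back to $A$ by a standard differencing argument at the end.

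The second step is the inversion itself. Write
\[
A_1(N)\approx\frac{1}{2\pi i}\int_{x-i\pi}^{x+i\pi}\frac{B(e^{-z})}{(1-e^{-z})^2}e^{Nz}\,dz,
\]
choosing $x=\sqrt{\gamma/N}$, which is the saddle point of $\gamma/z+Nz$. Split the contour into three ranges.
\begin{itemize}
\item The major arc $|y|\le \Delta x$. Here the asymptotic $B(e^{-z})\sim\alpha z^{\beta}e^{\gamma/z}$ is used. This requires upgrading the real-axis asymptotic to the cone, so I would take the hypothesis to hold uniformly in $|y|\le\Delta x$, as in \cite{BJM}. A saddle-point (Laplace-method) evaluation of $\int \alpha z^{\beta-2}e^{\gamma/z+Nz}\,dz$ then gives the leading term via the standard $I$-Bessel asymptotic.
\item The intermediate range, still inside a larger cone. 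Here the bound $B(e^{-z})\ll |z|^\beta e^{\gamma/|z|}$ applies. Since $\Re(1/z)=x/|z|^2<1/|z|$, the factor $e^{\gamma/|z|}$ sits against an oscillation that can be integrated by parts, and one needs to gain a factor $o(1)$ relative to the main term.
\item The outer range $|y|\ge\Delta x$, outside every fixed cone. Here I would use the crude bound $|B(e^{-z})|\le B(e^{-x})$, which follows from nonnegativity of the coefficients, together with the growth of $|1-e^{-z}|^{-2}$. This should show the contribution is small once $\Delta$ is large.
\end{itemize}
Collecting terms should yield
\[
\sum_{n\le N}b_n\sim \frac{\alpha\gamma^{\beta/2-1/4}}{2\sqrt\pi\,N^{\beta/2+1/4}}e^{2\sqrt{\gamma N}} .
\]

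The third step derives the pointwise asymptotic under monotonicity. For $h=h(n)$ with $h\to\infty$ and $h=o(\sqrt n)$, monotonicity of $\{b_n\}$ gives
\[
h\,b_{n}\le A(n+h)-A(n)\le h\,b_{n+h}.
\]
The partial-sum asymptotic, with a relative error $o(h/\sqrt n)$, gives
\[
A(n+h)-A(n)\sim h\sqrt{\gamma/n}\,A(n),
\]
because $\frac{d}{dN}e^{2\sqrt{\gamma N}}=\sqrt{\gamma/N}\,e^{2\sqrt{\gamma N}}$. Comparing the analogous inequality shifted by $h$ squeezes $b_n\sim\sqrt{\gamma/n}\,A(n)$, which is exactly the stated formula.

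The main obstacle is this last step. It requires the partial-sum asymptotic with an explicit relative error $o(h/\sqrt n)$ rather than merely $o(1)$. Plain $A(n)\sim\dots$ is not enough, since $e^{2\sqrt{\gamma N}}$ varies by a factor $1+O(h/\sqrt N)$ over windows of length $h$. To secure this, I would instead apply the inversion of the second step directly to $B$ itself (not to $A_1$), using the $k$-fold smoothing $A_k$ and the monotone differencing inequality iteratively. I would control the error via a quantitative version of the minor-arc estimate, in the manner of Ingham's original argument.
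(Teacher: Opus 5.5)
The paper gives no proof of this statement; it quotes it from \cite[Theorem 1.1]{BJM}, where it rests on Ingham's Tauberian argument. Judged on its own, your plan has a genuine gap in the outer range, and the gap cannot be repaired by tuning parameters.

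Put $x=\sqrt{\gamma/N}$. The saddle point of $\gamma/z+Nz$ has width $\asymp x^{3/2}$ in $y$, so the main term of $A_1(N)$ has size $\asymp x^{\beta-1/2}e^{2\sqrt{\gamma N}}$. On $\Delta x\le |y|\le\pi$ you use the trivial bound $|B(e^{-z})|\le B(e^{-x})\sim\alpha x^{\beta}e^{\gamma/x}$ together with $|1-e^{-z}|^{-2}\asymp (x^2+y^2)^{-1}$. This gives only $\ll \Delta^{-1}x^{\beta-1}e^{2\sqrt{\gamma N}}$, which is $\Delta^{-1}x^{-1/2}\asymp \Delta^{-1}N^{1/4}$ times the main term. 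That ratio is unbounded for every fixed $\Delta$.

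Extra smoothing does not rescue this. With $(1-e^{-z})^{-k-1}$ the bound is still $\asymp \Delta^{-k}x^{-1/2}$ times the main term. Moving $\Re z$ on that arc does not help either, since $\gamma/x'+Nx'$ is minimal at $x'=x$. The hypotheses say nothing about $B(e^{-z})$ outside fixed cones, so a direct Cauchy inversion over the whole circle cannot be closed.

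The missing idea is to use positivity in a genuinely Tauberian way, so that only a cone ever enters.
\begin{itemize}
\item Work with $B(q)/(1-q)=\sum_n A(n)q^n$. It satisfies the same hypotheses with $\beta-1$ in place of $\beta$.
\item Integrate $B(e^{-z})(1-e^{-z})^{-1}e^{Nz}$, with $z=x+iy$, against the Fej\'er weight $(1-|y|/(\Delta x))_+$, which is supported in the cone.
\item On the coefficient side this gives $\sum_n A(n)e^{-(n-N)x}K_\Delta(n-N)$, where $K_\Delta\ge 0$ has width $\asymp 1/(\Delta x)$.
\item Since $A$ is nondecreasing and $A(N+h)/A(N)\approx e^{hx}$, compare $A(N)$ with such sums centred slightly left and right of $N$. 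This pins $A(N)$ down up to a factor $1+o(1)$ as $\Delta\to\infty$.
\end{itemize}

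Inside the cone your analysis needs two corrections.
\begin{itemize}
\item You should not assume the asymptotic uniformly on $|y|\le\Delta x$; that is stronger than the stated hypothesis. You only need it for $|y|\le Kx^{3/2}$, and there it follows from what is given. The cone bound gives $|B(e^{-z})/(z^{\beta}e^{\gamma/z})|\ll e^{\gamma y^2/(2x^3)}$. Hence the functions $w\mapsto B(e^{-z})/(\alpha z^{\beta}e^{\gamma/z})$, with $z=x_0+x_0^{3/2}w$, are locally bounded and tend to $1$ for real $w$. Vitali's theorem then gives locally uniform convergence.
\item On $Kx^{3/2}\le|y|\le\Delta x$ no oscillation or integration by parts is needed, and you have no derivative bounds for it anyway. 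The $O$-bound alone suffices: $\gamma/|z|\le\gamma/x-c\gamma y^2/x^3$ for $|y|\le x$, and $\gamma/|z|\le\gamma/(\sqrt2\,x)$ beyond that.
\end{itemize}

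Finally, the obstacle you flag in the last step is not real. Take $h=\lfloor\theta\sqrt n\rfloor$ with $\theta>0$ fixed. The plain asymptotic gives $A(n\pm h)\sim e^{\pm\theta\sqrt\gamma}A(n)$. Then $A(n)-A(n-h)\le h\,b_n\le A(n+h)-A(n)$ yields
\[
\frac{1-e^{-\theta\sqrt\gamma}}{\theta}\le\liminf \frac{\sqrt n\,b_n}{A(n)}\le\limsup \frac{\sqrt n\,b_n}{A(n)}\le \frac{e^{\theta\sqrt\gamma}-1}{\theta}.
\]
Letting $\theta\to 0$ gives $b_n\sim\sqrt{\gamma/n}\,A(n)$ without any quantitative error term. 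The same device handles your passage from $A_1$ back to $A$.
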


%%%%%%%%%%%%%%%%%%%%%%%%%%%%%%%%%%%%%%%%%%%%%
%          Section.   %
%%%%%%%%%%%%%%%%%%%%%%%%%%%%%%%%%%%%%%%%%%%%%
\section{$t$-hooks in two partition sets from the first Rogers--Ramanujan identity}\label{sec3}

Let us start by recalling the first Rogers--Ramanujan identity:
\[
	\sum_{n\ge 0} \frac{ q^{n^2}}{(q;q)_n} =\frac{1}{(q,q^4;q^{5})_{\infty}}.
\]
%which implies that the number of partitions of $n$ into parts differing by $2$ equals the number of partitions of $n$ into parts congruent to $\pm 1$ mod $5$.

For convenience, let
%in the set $\mathcal{R}_1$ equals the number of partitions of $n$ in the set $\mathcal{R}_2$, where
\[
	\mathcal{R}_1:=\{\lambda : \lambda_{i}-\lambda_{i+1} \ge 2 \} \quad \text{and} \quad
	\mathcal{R}_2:=\{\lambda : \lambda_{i}\equiv 1,4 \pmod{5}\}.
\]
For $j=1,2$, recall from Introduction that
\begin{align*}
	r_{j,t}(n)&:=\sum\limits_{\substack{\lambda \in \mathcal{R}_j \\ |\lambda|=n}} \widetilde{r}_{j,t} (\lambda),
\end{align*}
where $\widetilde{r}_{j, t} (\lambda)$ is the number of $t$-hooks in $\lambda \in \mathcal{R}_j$.

To study $r_{j,t}(n)$, we first need to find the bivariate generating function for  $\mathcal{R}_j$:
\[
	R_{j,t}(x,q):=\sum_{n\ge 0} x^{r_{j,t}(n)} q^{n}.
\]
We then obtain the generating function of $r_{j,t}(n)$ by differentiating $R_{j,t}(x,q)$ with respect to $x$ at $1$. Namely, 
\[
	S_{j,t}(q):=\frac{\partial } {\partial x} R_{j,t}(x,q)\bigg|_{x=1}=\sum_{n\ge 0}  r_{j,t}(n) q^n. 
\]

%%%%%%%%%%%%%%%%%%%%%%%%%%%%%%%%%%%%%%%%%%%%%
\subsection{$t=1$ case} \label{sec3.1}
We first observe from the definition of $t$-hooks that $1$-hooks are corner cells in the Young diagram, so the number of $1$-hooks equals the number of different parts. Thus,
\begin{equation*}
\widetilde{r}_{1,1} (\lambda) =\ell(\lambda) \quad \text{ and } \quad 
\widetilde{r}_{2,1} (\lambda) =d (\lambda),
\end{equation*}   
where $\ell(\lambda)$ and $d(\lambda)$ denote the number of parts and the number of different parts, respectively. 
\iffalse
\begin{align*}
	\widetilde{r}_{1,1} (\lambda) &=\ell(\lambda):=\text{the number of parts of $\lambda$},\\
	\widetilde{r}_{2,1} (\lambda) &=d (\lambda):=\text{the number of different parts of $\lambda$}.
\end{align*}
\fi
The generating function for partitions in $\mathcal{R}_1$ is 
\begin{equation*}
    \sum_{n\ge 0}\frac{q^{n^2}}{(q;q)_n},
\end{equation*}
and the index $n$ in the summand counts the number of parts, from which it follows that
\[
	R_{1,1}(x,q)=\sum_{n\ge 0} x^{r_{1,1}(n)} q^{n}  =\sum_{\lambda\in \mathcal{R}_1} x^{\ell(\lambda)} q^{|\lambda|} =\sum_{n\ge 0} \frac{ x^n q^{n^2}}{(q;q)_n}.
\]
Moreover, we see   
\begin{align*}
	R_{2,1}(x,q)=\sum_{n\ge 0} x^{r_{2,1}(n)} q^{n} 
	&=\sum_{\lambda\in \mathcal{R}_2} x^{d(\lambda)} q^{|\lambda|}  \\
	&=\prod_{n\equiv 1,4 \!\!\!\pmod{5}}  (1+xq^{n}+xq^{2n}+xq^{3n}+\cdots) \\
	&= \prod_{n\equiv 1,4 \!\!\!\pmod{5}}  \left(1+\frac{xq^{n}}{1-q^n}\right)\\
	& = \prod_{n\equiv 1,4 \!\!\!\pmod{5}} \frac{1-(1-x)q^n}{1-q^n}. 
\end{align*}

We are now ready to give the generating function $S_{j,1}(q)$ for $j=1,2$. 
\begin{proposition} \label{gen_S1}
We have
\begin{align*}
	S_{1,1}(q)&=\sum_{n\ge 0} r_{1,1}(n) q^n = \sum_{n\ge 0} \frac{ n q^{n^2} }{(q;q)_n},\\
	S_{2,1}(q)&=\sum_{n\ge 0} r_{2,1}(n) q^n = \frac{1}{(q, q^4;q^5)_{\infty}} \frac{q+q^4}{1-q^{5}}.
\end{align*}
\end{proposition}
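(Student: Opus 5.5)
The plan is to differentiate the bivariate generating functions $R_{1,1}(x,q)$ and $R_{2,1}(x,q)$, already computed above, with respect to $x$ and then set $x=1$. By definition, $S_{j,1}(q)=\frac{\partial}{\partial x}R_{j,1}(x,q)\big|_{x=1}$. Since $\widetilde{r}_{1,1}(\lambda)=\ell(\lambda)$ and $\widetilde{r}_{2,1}(\lambda)=d(\lambda)$, this derivative is exactly $\sum_{\lambda\in\mathcal{R}_j}\widetilde{r}_{j,1}(\lambda)q^{|\lambda|}=\sum_{n\ge 0} r_{j,1}(n)q^n$. Termwise differentiation is justified for $|q|<1$ and $x$ in a neighborhood of $1$, because the sum and the product converge absolutely and locally uniformly there.

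For $S_{1,1}$, I would differentiate $\sum_{n\ge0}x^nq^{n^2}/(q;q)_n$ term by term. This gives $\sum_{n\ge 0} n x^{n-1}q^{n^2}/(q;q)_n$, and setting $x=1$ yields the stated formula.

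For $S_{2,1}$, I would use logarithmic differentiation of
\[
R_{2,1}(x,q)=\prod_{n\equiv 1,4 \!\!\!\pmod{5}}\frac{1-(1-x)q^n}{1-q^n}.
\]
The logarithmic derivative in $x$ of each factor is $q^n/(1-(1-x)q^n)$, which equals $q^n$ at $x=1$. The product itself at $x=1$ equals $1/(q,q^4;q^5)_\infty$. Hence
\[
S_{2,1}(q)=\frac{1}{(q,q^4;q^5)_\infty}\sum_{n\equiv 1,4 \!\!\!\pmod{5}} q^n .
\]
Summing the two geometric series $\sum_{k\ge0}q^{5k+1}+\sum_{k\ge0}q^{5k+4}$ gives $(q+q^4)/(1-q^5)$, which completes the formula.

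No step here is a real obstacle. The only point needing care is the interchange of differentiation with the infinite sum and product. This follows from locally uniform convergence in $x$ near $1$, since $\sum|1-x||q|^n<\infty$ for fixed $|q|<1$.
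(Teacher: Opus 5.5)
Your proposal is correct and matches the paper's proof: for $S_{1,1}$ you differentiate $\sum_{n\ge0} x^n q^{n^2}/(q;q)_n$ term by term at $x=1$, and for $S_{2,1}$ you logarithmically differentiate $\prod (1-(1-x)q^n)/(1-q^n)$ and then sum the geometric series. Your remark that locally uniform convergence near $x=1$ justifies differentiating the infinite sum and product termwise adds rigor the paper leaves implicit.
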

\begin{proof}
For each $S_{j,1}(q)$, $j=1,2$, we take the derivative of $R_{j,1}(x,q)$ with respect to $x$ at $x=1$. Then,
$S_{1,1}(q)$ is straightforward. For $S_{2,1}(q)$,% and $G_{3,1}(x,q)$, we use Logarithmic differentiation. 
\begin{align*}
	\frac{\partial } {\partial x} R_{2,1}(x,q)&= \frac{\partial}{\partial x} \frac{((1-x)q, (1-x)q^4;q^5)_{\infty} }{(q,q^4;q^5)_{\infty}} \\
	&= \frac{((1-x)q, (1-x)q^4;q^5)_{\infty}}{(q,q^4;q^5)_{\infty}} \sum_{n\ge 1 \atop n\equiv 1,4 \!\!\!\pmod{5}} \frac{\partial}{\partial x} \Log (1-(1-x)q^n)\\
	&= \frac{((1-x)q, (1-x)q^4;q^5)_{\infty}}{(q,q^4;q^5)_{\infty}} \sum_{n\ge 1 \atop n\equiv 1,4 \!\!\!\pmod{5}} \frac{ q^n}{1-(1-x)q^n},
\end{align*}
from which it follows that
$$
S_{2,1}(q)=\frac{1}{(q,q^4;q^5)_{\infty}} \sum_{n\ge 1 \atop n \equiv 1,4 \!\!\!\pmod{5}} q^n= \frac{1}{(q,q^4;q^5)_{\infty}} \frac{q+q^4}{1-q^5}, 
$$
as desired.
\end{proof}

%Similarly, applying Logarithmic differentiation to \eqref{R31}, we can get
%\begin{align}
%\sum_{n\ge 0} r_{3,1}(n) q^n  &=\frac{q}{1-q}  \sum_{n\ge 1} \frac{q^{n^2}}{(q;q)_{n-1}} \notag \\
%&= \frac{q}{1-q} \left( \sum_{n\ge 1} \frac{q^{n^2}}{(q;q)_n} -\sum_{n\ge 1} \frac{q^{n^2+n}}{(q;q)_n}\right) \notag \\ %= \frac{1}{(q^2, q^3;q^5)_{\infty}}.
%&= \frac{q}{1-q} \left( \sum_{n\ge 0} \frac{q^{n^2}}{(q;q)_n} -\sum_{n\ge 0} \frac{q^{n^2+n}}{(q;q)_n}\right) \notag \\ 
%&=\frac{q}{1-q} \left( \frac{1}{(q,q^4;q^5)_{\infty}}  -\frac{1}{(q^2,q^3;q^5)_{\infty}}\right). \label{dR31}
%\end{align}

%Here, the second inequality is trivial. 

%%%%%%%%%%%%%%%%%%%%%%%%%%%%%%%%%%%%%%%%%%%%%
\subsection{$t=2$ case}\label{sec3.2}
In this section, we will study the number of $2$-hooks. From the definition of $2$-hooks, it easily follows that the number of $2$-hooks equals the number of parts $\lambda_i$ with $\lambda_i-\lambda_{i+1}>1$ plus the number of parts appearing at least twice. Here, we assume that $\lambda_{\ell(\lambda)+1}=0$. Thus, %$\widetilde{r}_{1,2}(\lambda)$ equals the number of parts minus the number of parts equal to $1$, and $\widetilde{r}_{2,2}(\lambda)$ 
\begin{equation*}
\widetilde{r}_{1,2}(\lambda)=\ell_{>1}(\lambda) \quad \text{ and } \quad \widetilde{r}_{2,2}(\lambda)=d_{>1}(\lambda) + m_{>1} (\lambda),
\end{equation*}    
where $\ell_{>1}(\lambda), d_{>1}(\lambda)$ and $m_{>1}(\lambda)$ denote the number of parts greater than $1$, the number of different parts greater than $1$, and the number of parts with multiplicity greater than $1$, respectively. 
\iffalse
\begin{align*}
	\widetilde{r}_{1,2}(\lambda)&=\ell(\lambda) -\text{the number of parts equal to $1$},\\
	\widetilde{r}_{2,2} (\lambda)&=d_1 (\lambda)+ d'_2(\lambda)\\
	:&=\text{ (the number of different parts $>1$)} +\text{ (the number of parts with multiplicity $>1$)}.%,\\
	%\widetilde{r}_{3,2} (\lambda)&= \# \{ i \,|\, \lambda_i -\lambda_{i+1}\ge 2, \lambda_{\ell+1}=0 \} +\#\{ j\, |\, \lambda'_{j}-\lambda'_{j+1} \ge 2\}. 
\end{align*}
%Here $\lambda'$ denotes the conjugate of $\lambda$. 
\fi

First, for partitions in $\mathcal{R}_1$, we can divide them into two groups: partitions with or without a part of size $1$. Then
\[
	R_{1,2}(x,q)=\sum_{n\ge 0} x^{r_{1,2}(n)} q^{n} = \sum_{n\ge 1} \frac{ x^{n-1} q^{n^2}}{(q;q)_{n-1}} +\sum_{n\ge 0} \frac{ x^n q^{n^2+n}}{(q;q)_n},
\]
where the first sum generates partitions with a part of size $1$ and the second sum generates partitions with no part of size $1$. 

Also, for $\mathcal{R}_2$, applying the same analysis seen in Section~\ref{sec3.1}, we can obtain
\[
	R_{2,2}(x,q)=\sum_{n\ge 0} x^{r_{2,2}(n)} q^{n} = \left(1+q+ \frac{xq^2}{1-q} \right)  \prod_{n\equiv 1,4 \!\!\!\pmod{5} \atop n>1 } \left( 1+ xq^n + \frac{x^2 q^{2n}}{1-q^n} \right).
\]

Similarly to Proposition \ref{gen_S1}, we can derive the generating functions of $r_{1,2}(n)$ and $r_{2,2}(n)$ from the above generating functions. We omit the details.

\begin{proposition}\label{gen_S2}
We have
\begin{align*}
	S_{1,2}(q)=\sum_{n\ge 0} r_{1,2}(n) q^n 
	%& =\sum_{n\ge 1 } \frac{ n q^{n^2+n}}{(q;q)_n} + \sum_{n\ge 1} \frac{ (n-1) q^{n^2}}{(q;q)_{n-1}}\\
	& =\sum_{n\ge 1} \frac{nq^{n^2}}{(q;q)_n} - \sum_{n\ge 1} \frac{ q^{n^2}}{(q;q)_{n-1}}, \\
	S_{2,2}(q)=\sum_{n\ge 0} r_{2,2}(n) q^n  
	&=\frac{1}{(q,q^4;q^5)_{\infty}} \left( \frac{q^4+q^6}{1-q^5}+ \frac{q^2+q^{8}}{1-q^{10}} \right).
\end{align*}
\end{proposition}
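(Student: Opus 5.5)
Both formulas come from differentiating the bivariate generating functions $R_{1,2}(x,q)$ and $R_{2,2}(x,q)$ of Section~\ref{sec3.2} with respect to $x$ and setting $x=1$, as in Proposition~\ref{gen_S1}. For $S_{1,2}$ the only extra ingredient is a one-line telescoping identity. For $S_{2,2}$ it is logarithmic differentiation of the product followed by collecting geometric series in residue classes modulo $5$.

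For $S_{1,2}(q)$, differentiating termwise gives
\[
S_{1,2}(q)=\sum_{n\ge 1}\frac{(n-1)q^{n^2}}{(q;q)_{n-1}}+\sum_{n\ge 0}\frac{nq^{n^2+n}}{(q;q)_n}.
\]
Since $(1-q^n)/(q;q)_n=1/(q;q)_{n-1}$ for $n\ge 1$, we have
\[
\frac{q^{n^2+n}}{(q;q)_n}=\frac{q^{n^2}}{(q;q)_n}-\frac{q^{n^2}}{(q;q)_{n-1}}.
\]
Substituting this into the second sum (the $n=0$ term vanishes), the terms $\pm\, n\,q^{n^2}/(q;q)_{n-1}$ cancel against the first sum. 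What remains is $\sum_{n\ge1} nq^{n^2}/(q;q)_n-\sum_{n\ge1}q^{n^2}/(q;q)_{n-1}$, which is the claimed formula.

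For $S_{2,2}(q)$, first evaluate at $x=1$. The first factor is $1+q+q^2/(1-q)=1/(1-q)$. Each remaining factor is $1+q^n+q^{2n}/(1-q^n)=1/(1-q^n)$. Hence $R_{2,2}(1,q)=1/(q,q^4;q^5)_\infty$. By logarithmic differentiation, $S_{2,2}(q)$ equals $1/(q,q^4;q^5)_\infty$ times the sum over all factors of (derivative at $x=1$)/(value at $x=1$):
\begin{itemize}
\item The first factor contributes $\frac{q^2}{1-q}\cdot(1-q)=q^2$.
\item The factor indexed by $n$ contributes $(1-q^n)\bigl(q^n+\frac{2q^{2n}}{1-q^n}\bigr)=q^n+q^{2n}$.
\end{itemize}

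Summing over $n\equiv 1,4\pmod 5$ with $n>1$, i.e.\ $n=4,6,9,11,\dots$:
\begin{itemize}
\item the terms $q^n$ give $(q^4+q^6)/(1-q^5)$;
\item the terms $q^{2n}$ give $(q^8+q^{12})/(1-q^{10})$.
\end{itemize}
Finally, we combine
\[
q^2+\frac{q^8+q^{12}}{1-q^{10}}=\frac{q^2-q^{12}+q^8+q^{12}}{1-q^{10}}=\frac{q^2+q^8}{1-q^{10}},
\]
which yields the stated expression.

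The only step needing care is this last bookkeeping: absorbing the isolated $q^2$ from the special factor at $n=1$ into the modulus-$10$ geometric series. The rest is routine. Convergence for $|q|<1$ justifies termwise differentiation and logarithmic differentiation of the infinite product, exactly as in Proposition~\ref{gen_S1}.
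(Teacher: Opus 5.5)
Your proposal is correct and takes the same approach as the paper, which says only that the formulas follow by differentiating $R_{1,2}(x,q)$ and $R_{2,2}(x,q)$ at $x=1$ as in Proposition~\ref{gen_S1} and omits the details. Your telescoping identity $q^{n^2+n}/(q;q)_n=q^{n^2}/(q;q)_n-q^{n^2}/(q;q)_{n-1}$, the logarithmic-derivative contributions $q^2$ and $q^n+q^{2n}$, and the absorption of the isolated $q^2$ into the modulus-$10$ series all check out and supply exactly the omitted bookkeeping.
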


%%%%%%%%%%%%%%%%%%%%%%%%%%%%%%%%%%%%%%%%%%%%%
%          Section.     %
%%%%%%%%%%%%%%%%%%%%%%%%%%%%%%%%%%%%%%%%%%%%%
\section{Asymptotics for hook numbers arising from the first Rogers--Ramanujan identity}\label{sec4}
In this section, we derive the asymptotic formulas in Theorems \ref{thm:r1_asymp} and \ref{thm:r2_asymp}.
As the proof of Theorem \ref{thm:r1_asymp} consists of several steps, we first outline the key steps and main ideas of the proof.

Let us recall the generating function of $r_{1,1}(n)$ from Proposition \ref{gen_S1}:
\[
	S_{1,1}(q)=\sum_{n\ge 0} r_{1,1}(n) q^n = \sum_{n\ge 0} \frac{ n q^{n^2} }{(q;q)_n}.
\]
We note that the summand $ \frac{ n q^{n^2} }{(q;q)_n}$ is essentially unimodal and thus the contribution around the peak will dominate the asymptotic of $S_{1,1}(q)$.
The summand is near the peak when
\[
	\frac{ (n+1)q^{(n+1)^2}}{(q;q)_{n+1}} \big/ \frac{n q^{n^2}}{(q;q)_{n}} \approx 1, 
\]
i.e., $q^n \approx \phi^{-1}$, where $\phi$ is the golden ratio. 
Thus, the main contribution to $S_{1,1}(q)$ comes from the summand when $n$ is in
\[
	\mathcal{N}_{\varepsilon}:= \left\{ n : \left| n - \frac{\log (\phi)}{\varepsilon} \right| \le \varepsilon^{-\frac{3}{5}} \right\},
\]
and we decompose
\begin{equation} \label{eqn:decom}
	S_{1,1}(q) = \mathcal{S}_{1,1}(q) + \mathcal{E}_{1,1} (q) :=
 \sum_{n \in \mathcal{N}_{\varepsilon}} \frac{nq^{n^2}}{(q;q)_n} + \sum_{n \in \mathbb{N} \setminus \mathcal{N}_{\varepsilon}}  \frac{nq^{n^2}}{(q;q)_n}. 
\end{equation}
\begin{remark}
In the definition of $\mathcal{N}_\varepsilon$, we choose the exponent of $\varepsilon$ for computational simplicity. 
We can choose $\lambda \in (-5/8,-1/2)$ and consider 
\[
	\mathcal{N}_{\varepsilon, \lambda}:= \left\{ n : \left| n - \frac{\log (\phi)}{\varepsilon} \right| \le \varepsilon^\lambda \right\}. 
\]
\end{remark}

To apply Ingham's Tauberian theorem, we need to estimate $S_{1,1}(q)$ as $z=\varepsilon(1+iy) \to 0$ in the bounded cone $|y|\ll 1$. In Section~\ref{sec:S_narrow}, we first obtain the asymptotic behavior of the summand near the peak with a narrow range of $y$. While this range of $y$ is not wide enough to employ the Tauberian theorem, this asymptotic formula is convenient to apply the saddle point method to derive
 \begin{equation} \label{S11_asym}
	\mathcal{S}_{1,1}(q) \sim \frac{\sqrt{\phi} }{5^{1/4}}\log(\phi)  z^{-1} e^{\frac{\pi^2}{15 z}} 
\end{equation}
as $z \to 0$. In Section \ref{sec:S_far}, we will show that the contribution from $\mathcal{E}_{1,1}(q)$ is negligible as expected. In Section~\ref{sec:S_asymp}, we show that the above asymptotic formula in \eqref{S11_asym} can be extended to a wider range $y \ll 1$.  Finally, we give the proofs of Theorems \ref{thm:r1_asymp} and \ref{thm:r2_asymp} in Sections \ref{sec:r1_asymp} and \ref{sec:r2_asymp}.

\subsection{Asymptotics for $\mathcal{S}_{1,1}(q)$ in a narrow range of $y$}\label{sec:S_narrow}

We first prove narrow range estimates near the peak.

\begin{proposition}\label{prop:narrow}
	Let $n \in \mathcal{N}_{\varepsilon}$ and $u \in \mathbb{C}$ be such that
	\begin{equation}\label{eq:n_to_u}
		n = \frac{\log (\phi)}{z} + \frac{u}{\sqrt{z}}.
	\end{equation}
	If $y \ll \varepsilon^{\frac13+\delta}$ for some $\delta>0$, then we have, uniformly in $u$,
	\[
		\frac{n q^{n^2}}{(q;q)_n}  =  \frac{\phi \log(\phi) }{\sqrt{2\pi z}}\exp \left( \frac{\pi^2}{15z} - \frac{\sqrt{5}\phi}{2}  u^2   \right) + O\left( \varepsilon^{3\delta_1}\right) \frac{1}{\sqrt{z}} \exp \left( \frac{\pi^2}{15z} - \frac{\sqrt{5}\phi}{2}  u^2   \right),
	\]
	where $\delta_1=\min\{\delta, \frac{1}{15}\}$. 
\end{proposition}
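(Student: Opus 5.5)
We write $\frac{1}{(q;q)_n} = \frac{(q^{n+1};q)_\infty}{(q;q)_\infty}$ and treat the two Pochhammer factors separately. Lemma \ref{lem:eta_asym} applies to $(q;q)_\infty$, since $y\ll \varepsilon^{1/3+\delta}$ lies well inside its hypothesis. It gives
\[
\frac{1}{(q;q)_\infty}=\sqrt{\frac{z}{2\pi}}\,e^{\frac{\pi^2}{6z}}\bigl(1+O(z)\bigr).
\]
For the other factor we use Lemma \ref{lem:Zagier_asym}. By \eqref{eq:n_to_u}, $q^{n}=e^{-nz}=\phi^{-1}e^{-u\sqrt z}$. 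Hence $(q^{n+1};q)_\infty=(we^{-\nu z}q;q)_\infty$ with $w=\phi^{-1}$ and $\nu=u/\sqrt z$. For $n\in\mathcal N_\varepsilon$ we have $|u|\ll \varepsilon^{-3/5}|\sqrt z|\ll\varepsilon^{-1/10}$, so $\nu z=u\sqrt z\ll \varepsilon^{2/5}=o(1)$ and the lemma applies. Substituting, we get
\[
\Log (q^{n+1};q)_\infty = -\frac{\Li_2(\phi^{-1})}{z}-\Bigl(\frac{u}{\sqrt z}+\frac12\Bigr)\Log(1-\phi^{-1})-\frac{u^2}{2}\frac{\phi^{-1}}{1-\phi^{-1}}+\psi_w(\nu,z).
\]
We then use $1-\phi^{-1}=\phi^{-2}$, $\frac{\phi^{-1}}{1-\phi^{-1}}=\phi$, and $\Li_2(\phi^{-1})=\frac{\pi^2}{10}-\log^2\phi$.

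Next we expand $n q^{n^2}=n\exp(-zn^2)$. We have
\[
zn^2=\frac{\log^2\phi}{z}+\frac{2u\log\phi}{\sqrt z}+u^2,
\]
and $n=\frac{\log\phi}{z}\bigl(1+O(u\sqrt z/\log\phi)\bigr)$. Adding all the exponents, the $\frac{\log^2\phi}{z}$ terms cancel against $-\Li_2$ and leave $\frac{\pi^2}{6z}-\frac{\pi^2}{10z}=\frac{\pi^2}{15z}$. The linear terms are $-\frac{2u\log\phi}{\sqrt z}$ from $q^{n^2}$ and $+\frac{2u\log\phi}{\sqrt z}$ from $-\nu\Log(\phi^{-2})$, so they cancel; this is the saddle-point condition and confirms the choice of peak. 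The quadratic terms are $-u^2-\frac{\phi}{2}u^2=-\frac{2+\phi}{2}u^2$. Since $\phi^2=\phi+1$ and $\sqrt5=2\phi-1$, we have $\sqrt5\phi=2\phi^2-\phi=2+\phi$, which is the claimed $-\frac{\sqrt5\phi}{2}u^2$. The constant $-\frac12\Log(\phi^{-2})=\log\phi$ gives a factor $\phi$. Collecting the prefactors $\frac{\log\phi}{z}\cdot\phi\cdot\sqrt{z/(2\pi)}$ gives the main term $\frac{\phi\log\phi}{\sqrt{2\pi z}}$.

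The main obstacle is making the error terms uniformly $O(\varepsilon^{3\delta_1})$ relative to the main term. There are three sources.
\begin{itemize}
\item The factor $n=\frac{\log\phi}{z}(1+O(u\sqrt z))$ contributes a relative error $u\sqrt z\ll\varepsilon^{2/5}$.
\item The remainder $\psi_w(\nu,z)$ has its $\nu^k$-coefficient of size $O(z^{2k/3})$. With $\nu=u/\sqrt z$, the cubic term $\nu^3 z^2$ is of order $u^3\sqrt z\ll\varepsilon^{1/2-3/10}=\varepsilon^{1/5}=\varepsilon^{3/15}$. Higher terms are smaller: the $\nu^k$ term is $\ll u^k z^{k/6}$, which improves with $k$ since $|u|\ll\varepsilon^{-1/10}$. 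This is where $\delta_1\le\frac1{15}$ comes from, and we will truncate the expansion \eqref{eq:psi} at a fixed $R$ and bound these terms carefully.
\item The Pochhammer expansions are taken at complex $z$, and the error is measured against $|\exp(\cdots)|$. Uniformity in the cone $y\ll\varepsilon^{1/3+\delta}$ is needed so that $\arg z$ is small and terms like $u\sqrt z$ stay controlled in modulus.
\end{itemize}
I would track how $\delta$ enters through the replacement of $\varepsilon$ by $|z|$ in these bounds, expecting an error $O(\varepsilon^{3\delta})$. Together these give the stated error with $\delta_1=\min\{\delta,\frac1{15}\}$.
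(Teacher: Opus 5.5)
Your overall route is the paper's: the same splitting $1/(q;q)_n=(q^{n+1};q)_\infty/(q;q)_\infty$, Lemma~\ref{lem:Zagier_asym} with $w=\phi^{-1}$ and $\nu=u/\sqrt z$, and Lemma~\ref{lem:eta_asym}. Your main-term algebra is correct: the $\pi^2/15$, the cancelling linear terms, $2+\phi=\sqrt5\phi$, and the prefactor $\phi\log\phi/\sqrt{2\pi z}$. The gap is the bound on $u$.

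You assert $|u|\ll\varepsilon^{-3/5}|\sqrt z|$. But membership in $\mathcal N_\varepsilon$ only controls $n-\frac{\log\phi}{\varepsilon}$, while $u$ is centred at $\frac{\log\phi}{z}$:
\[
\frac{u}{\sqrt z}=n-\frac{\log\phi}{z}=\mu+\frac{iy}{1+iy}\cdot\frac{\log\phi}{\varepsilon},\qquad |\mu|\le\varepsilon^{-\frac35}.
\]
Under $y\ll\varepsilon^{\frac13+\delta}$ the second term can be as large as $\varepsilon^{\delta-\frac23}$. It exceeds $\varepsilon^{-\frac35}$ once $|y|\gg\varepsilon^{\frac25}$, which is possible exactly when $\delta<\frac1{15}$. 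So the correct uniform bound is $|u|\ll\varepsilon^{-\frac16+\delta_1}$. Your bound $|u|\ll\varepsilon^{-\frac1{10}}$ holds only for $y\ll\varepsilon^{\frac25}$, the range later used in Proposition~\ref{prop:S11_narrow}.

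This shift is precisely where $\delta$ enters. The mechanism you propose, replacing $\varepsilon$ by $|z|$, cannot produce it, since $|z|=\varepsilon\sqrt{1+y^2}$ differs from $\varepsilon$ only by a factor $1+O(y^2)$. As written, your bounds would give a $\delta$-independent relative error $O(\varepsilon^{1/5})$, which cannot be right in that range. By \eqref{eq:psi}, the $\nu^3$-coefficient of $\psi_{\phi^{-1}}(\nu,z)$ is $\frac16\Li_{-1}(\phi^{-1})z^2+O(z^3)\neq0$. So a term $c\,u^3\sqrt z$ with $c\neq 0$ is genuinely present, and it has size $\varepsilon^{3\delta}$ when $|u|\asymp\varepsilon^{\delta-\frac16}$.

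With the corrected bound your plan goes through:
\begin{itemize}
\item $\nu z=u\sqrt z\ll\varepsilon^{\frac13+\delta_1}=o(1)$, so Lemma~\ref{lem:Zagier_asym} applies, and the relative correction from the factor $n$ is $O(\varepsilon^{\frac13+\delta_1})$.
\item For $k\ge3$, the $\nu^k$-part of $\psi_{\phi^{-1}}$ is $\ll u^kz^{k/6}\ll\varepsilon^{k\delta_1}$.
\item For $k=1,2$, do not use the generic bound $O(z^{2k/3})$, which would only give $\varepsilon^{\delta_1}$ and $\varepsilon^{2\delta_1}$. Instead use the explicit $r=2,3$ terms of \eqref{eq:psi}, whose $\nu$- and $\nu^2$-coefficients are $O(z)$ and $O(z^2)$. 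These give $u\sqrt z$ and $u^2z$, both much smaller.
\end{itemize}
Altogether this yields the stated $O(\varepsilon^{3\delta_1})$ with $\delta_1=\min\{\delta,\frac1{15}\}$.
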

\begin{proof}
We see that
\[
	q^n = \exp \left( -n  z \right) = \exp \left( - \log(\phi ) - u \sqrt{z}  \right) = \phi^{-1} e^{- u \sqrt{z}}.
\]
Let $\nu := \frac{u}{\sqrt{z}}$. As $n \in \mathcal{N}_{\varepsilon}$, we observe that
\begin{equation}\label{eq:nu}
	\nu = n - \frac{\log (\phi)}{z} = \left( \frac{\log (\phi)}{\varepsilon} + \mu \right) - \frac{\log (\phi)}{\varepsilon(1+i y)} = \frac{iy}{1+iy} \frac{\log(\phi)}{\varepsilon} + \mu \ll \varepsilon^{\delta-\frac23} + \varepsilon^{-\frac35} 
\end{equation}
for some $\mu$ with $|\mu|<\epsilon^{-3/5}$.
Thus, $\nu z = o(1)$ as desired. Then we apply Lemma \ref{lem:Zagier_asym} with $w = \phi^{-1}$ and Lemma \ref{lem:eta_asym} to deduce that 
\begin{align*}
	& \Log\left(\frac{1}{(q;q)_n}\right)=\Log \left(\frac{(q^{n+1};q)_\infty}{(q;q)_\infty} \right)\\
	&= - \Li_2 (\phi^{-1}) \frac{1}{z} - \left( \nu + \frac{1}{2} \right) \log(1-\phi^{-1}) - \frac{\nu^2 z}{2} \frac{\phi^{-1}}{1-\phi^{-1}} + \psi_{\phi^{-1}} (\nu, z) +\frac{\pi^{2}}{6z} + \frac{1}{2} \Log\left(\frac{z}{2\pi} \right) - \frac{z}{24} + \mathcal{E}\\
	&= \left(\frac{\pi^2}{6} - \Li_2 (\phi^{-1})\right) \frac{1}{z} +\left( \frac{2u}{\sqrt{z}} + 1 \right) \log(\phi) 
	 + \frac{1}{2} \Log\left(\frac{z}{2\pi} \right) -\frac{\phi u^2}{2} - \frac{z}{24} + \xi \left(\frac{u}{\sqrt{z}}, z \right), 
\end{align*}
where $\xi(\nu,z):=\psi_{\phi^{-1}}(\nu,z) + \mathcal{E}$. Here, $\xi(\nu,z)$ has the same asymptotic expansion as $\psi_{\phi^{-1}}(\nu,z)$ since $\mathcal{E}\ll z^L$ for arbitrary $L \in \mathbb{N}$, i.e. for $R \in \mathbb{N}$, as $z\to 0$ with $\Re(z)>0$
\begin{equation}\label{eq:xi_n}
	\xi(\nu, z) = -\sum_{r=2}^{R-1} \left( B_r(-\nu)-\delta_{r,2} \nu^2 \right) \Li_{2-r}(\phi^{-1}) \frac{z^{r-1}}{r!} + O\left( z^{R-1} \right).
\end{equation}
We note from Lemma \ref{lem:Zagier_asym} that the coefficient of $\nu^n$ in \eqref{eq:xi_n} is $O\left(z^{2n/3} \right)$ for every $n \geq 0$. For $q^{n^2}$, we have
\[
	-n^2 z = - \left( \frac{\log (\phi)}{z} + \frac{u}{\sqrt{z}} \right)^2 z = - \frac{\log^2 (\phi)}{z}  - \frac{2\log (\phi)}{\sqrt{z}}u  - u^2.
\]
Hence, we obtain that
\begin{align*}
	\frac{n q^{n^2}}{(q;q)_n} 
	&=\left( \frac{\log (\phi)}{z} + \frac{u}{\sqrt{z}}  \right) \exp\left(  - \frac{\log^2 (\phi)}{z}  - \frac{2\log (\phi)}{\sqrt{z}}u  - u^2 \right) \\
	&\quad \times\phi \sqrt{\frac{z}{2\pi}}  \exp \left( \frac{\frac{\pi^2}{15}+\log^2(\phi)}{z} + \frac{2\log(\phi)}{\sqrt{z}}u -  \frac{\phi}{2} u^2  \right) \exp\left(-\frac{z}{24} + \xi \left(\frac{u}{\sqrt{z}}, z\right) \right)\\
	%&=  \phi \left( \frac{\log(\phi) }{z} +  \frac{u}{\sqrt{z}} \right)  \sqrt{\frac{z}{2\pi}}  \exp \left( \frac{\pi^2}{15z} - \frac{2+\phi}{2}  u^2   \right) \exp\left(-\frac{z}{24} + \xi \left( \frac{u}{\sqrt{z}}, z\right) \right)\\
	&=  \left( \frac{\phi \log(\phi) }{\sqrt{2\pi z}} + \frac{\phi}{\sqrt{2\pi}} u\right) \exp \left( \frac{\pi^2}{15z} - \frac{\sqrt{5}\phi}{2}  u^2   \right) \exp\left(-\frac{z}{24} + \xi \left( \frac{u}{\sqrt{z}}, z\right) \right),
\end{align*}
where we use
\[
	\frac{\pi^2}{6} - \Li_2 (\phi^{-1}) = \frac{\pi^2}{15} + \log^2(\phi).
\]

%Next, consider 
%\[
%	\sum_{r\geq 0} C_r(u) z^{\frac{r}{2}}:= \exp\left(-\frac{z}{24} + \xi \left( \frac{u}{\sqrt{z}}, z\right) \right).
%\]
As $y \ll \varepsilon^{\frac13+\delta}$, we have $z \asymp \varepsilon$. Since $n \in \mathcal{N}_{\varepsilon}$, we have $\frac{u}{\sqrt{z}} \ll \varepsilon^{-\frac35}$, and it follows from \eqref{eq:nu} that 
\begin{equation}\label{eq:u_bound}
	u \ll \varepsilon^{-\frac16+\delta_1}.
\end{equation}
Thus, \eqref{eq:xi_n} gives
\[
	-\frac{z}{24} + \xi \left( \frac{u}{\sqrt{z}}, z\right) =  O(z) + O\left(u\sqrt{z}\right) + \max_{n \geq 3} O\left(u^n z^{n/6}\right) =O\left(\varepsilon^{3\delta_1}\right),
\]
which holds uniformly in $u$ since the constant in the error term is independent of $u$.

Therefore, we arrive at
\begin{align*}
	\frac{n q^{n^2}}{(q;q)_n} &=  \left( \frac{\phi \log(\phi) }{\sqrt{2\pi z}} + \frac{\phi}{\sqrt{2\pi}} u\right)  \left(1+ O\left( \varepsilon^{3\delta_1}\right)\right)\exp \left( \frac{\pi^2}{15z} - \frac{\sqrt{5}\phi}{2}  u^2   \right)\\
	&=\frac{\phi \log(\phi) }{\sqrt{2\pi z}}\exp \left( \frac{\pi^2}{15z} - \frac{\sqrt{5}\phi}{2}  u^2   \right) + O\left( \varepsilon^{3\delta_1}\right) \frac{1}{\sqrt{z}} \exp \left( \frac{\pi^2}{15z} - \frac{\sqrt{5}\phi}{2}  u^2   \right). \qedhere
\end{align*}
\end{proof}

We first derive the asymptotic of $S_{1,1}(q)$ for $y$ small.

\begin{proposition}\label{prop:S11_narrow}
	If $y \ll \varepsilon^{\frac{2}{5}+\delta}$ for some $\delta>0$, then we have, as $z \to 0$, 
	\[
		\mathcal{S}_{1,1}(q) = \sum_{n \in \mathcal{N}_{\varepsilon}} \frac{nq^{n^2}}{(q;q)_n} = \frac{\sqrt{\phi} }{5^{1/4}}\log(\phi)  z^{-1} e^{\frac{\pi^2}{15 z}} + O\left(\varepsilon^{-\frac{9}{10}} e^{\frac{\pi^2}{15\varepsilon}}\right).
	\]
\end{proposition}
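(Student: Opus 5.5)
Since $y \ll \varepsilon^{\frac25+\delta}$ implies $y \ll \varepsilon^{\frac13+\delta}$, Proposition \ref{prop:narrow} holds for every $n\in\mathcal{N}_\varepsilon$, uniformly in $u$. Summing it over $n\in\mathcal{N}_\varepsilon$ gives
\[
\mathcal{S}_{1,1}(q) = \frac{\phi\log(\phi)}{\sqrt{2\pi z}}\, e^{\frac{\pi^2}{15z}} \sum_{n\in\mathcal{N}_\varepsilon} e^{-\frac{\sqrt5\phi}{2}u_n^2} + O\left(\varepsilon^{3\delta_1}\right) \frac{1}{\sqrt{z}}\, e^{\frac{\pi^2}{15z}}\sum_{n\in\mathcal{N}_\varepsilon} \left|e^{-\frac{\sqrt5\phi}{2}u_n^2}\right|,
\]
where $u_n = \sqrt{z}\,\bigl(n-\frac{\log(\phi)}{z}\bigr)$. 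The task is therefore to evaluate the Gaussian sum $\sum_{n\in\mathcal{N}_\varepsilon} e^{-c u_n^2}$, with $c=\frac{\sqrt5\phi}{2}$, to leading order.

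The sum is a Riemann sum with step $\sqrt z$ in the variable $u$. I would apply Euler--Maclaurin (Proposition \ref{EM}) with $f(x)=e^{-cx^2}$, or simply compare the sum with the integral using the bound $f'\ll 1$. This gives
\[
\sum_{n\in\mathcal{N}_\varepsilon} e^{-cu_n^2} = \frac{1}{\sqrt z}\int e^{-cu^2}\,du + O(1),
\]
where the integral runs along the line through the $u_n$, which is parallel to $\sqrt z\,\mathbb{R}$. The endpoints lie at $|u|\asymp \varepsilon^{-\frac35}|\sqrt z|\asymp \varepsilon^{-\frac1{10}}\to\infty$, so the tails beyond them are $O(e^{-c'\varepsilon^{-1/5}})$. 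This tail bound needs $\Re(cu^2)>0$ along the line, i.e. $\Re(z)>0$ with $\arg z$ small, which holds here since $y\to 0$. I then rotate the contour back to $\mathbb{R}$, which is justified because the integrand decays in the relevant sector. The result is $\sqrt{\pi/c}=\sqrt{2\pi}/(5^{1/4}\sqrt\phi)$.

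Multiplying, the main term becomes
\[
\frac{\phi\log\phi}{\sqrt{2\pi z}}\cdot\frac{1}{\sqrt z}\cdot\frac{\sqrt{2\pi}}{5^{1/4}\sqrt\phi}\,e^{\frac{\pi^2}{15z}} = \frac{\sqrt\phi}{5^{1/4}}\log(\phi)\,z^{-1}e^{\frac{\pi^2}{15z}},
\]
as claimed. The $O(1)$ error from Euler--Maclaurin contributes $\varepsilon^{-1/2}|e^{\pi^2/(15z)}|$. The second sum is similarly $O(\varepsilon^{-1/2})$, so it contributes $O\bigl(\varepsilon^{3\delta_1-1}|e^{\pi^2/(15z)}|\bigr)$.

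The main obstacle is converting $|e^{\pi^2/(15z)}|$ into $e^{\pi^2/(15\varepsilon)}$ and checking that the exponent matches $-\frac{9}{10}$. We have $\Re(1/z)=\frac{1}{\varepsilon(1+y^2)}\le \frac1\varepsilon$, so $|e^{\pi^2/(15z)}|\le e^{\pi^2/(15\varepsilon)}$, and this step is harmless. The error $\varepsilon^{3\delta_1-1}$ is at most $\varepsilon^{-9/10}$ only when $\delta_1\ge \frac1{30}$, and that is exactly where the stronger hypothesis $y\ll\varepsilon^{2/5+\delta}$ is needed. I would revisit the estimate \eqref{eq:u_bound}: with $y\ll\varepsilon^{2/5+\delta}$, equation \eqref{eq:nu} gives $\nu\ll\varepsilon^{-3/5}$, hence $u\ll\varepsilon^{-1/10}$. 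Then $u\sqrt z\ll\varepsilon^{2/5}$ and $u^nz^{n/6}\ll\varepsilon^{n/15}$, so the error in the exponential is $O(\varepsilon^{1/5})$. Since $\varepsilon^{1/5}\le\varepsilon^{1/10}$, this yields the relative error $\varepsilon^{1/10}$ and hence the stated $O\bigl(\varepsilon^{-9/10}e^{\pi^2/(15\varepsilon)}\bigr)$. A secondary point is the uniformity of the Euler--Maclaurin error along the complex line. Because $\arg z\to 0$, the function $f$ and its derivatives are bounded by Gaussian envelopes on that line, and this is enough.
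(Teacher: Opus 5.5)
Your route is the paper's (Proposition~\ref{prop:narrow} with $\delta_1=\frac1{15}$, Euler--Maclaurin, shift of the integral to $\mathbb{R}$), and your main term is correct. The error analysis, however, has a genuine gap.

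The points $u_n$ lie on the line $-\frac{\log(\phi)}{\sqrt z}+\sqrt z\,\mathbb{R}$, which does not pass through $0$. Writing $n=\frac{\log(\phi)}{\varepsilon}+\mu$ with $\mu\in\mathbb{R}$, a direct computation gives
\[
\Re\left(\frac{\sqrt5\phi}{2}u_n^2\right)=\frac{\sqrt5\phi}{2}\left(\varepsilon\mu^2-\frac{y^2\log^2(\phi)}{\varepsilon(1+y^2)}\right).
\]
So for $|\mu|\le 1$ the factor $|e^{-\frac{\sqrt5\phi}{2}u_n^2}|$ has size $\exp\bigl(\frac{\sqrt5\phi\log^2(\phi)}{2}\cdot\frac{y^2}{\varepsilon(1+y^2)}\bigr)$. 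This is unbounded when $\varepsilon^{\frac12}\ll y\ll\varepsilon^{\frac25+\delta}$. That range is allowed by the hypothesis, and it is actually needed: the proof of Proposition~\ref{prop:S11_asymp} uses its overlap with $\varepsilon^{\frac12-\delta}\ll y$.

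In that range three of your claims fail. The function $f$ is not bounded by a Gaussian envelope on the line, the Euler--Maclaurin remainder is not $O(1)$, and $\sum_n|e^{-\frac{\sqrt5\phi}{2}u_n^2}|$ is not $O(\varepsilon^{-\frac12})$. The step you call harmless, $|e^{\frac{\pi^2}{15z}}|\le e^{\frac{\pi^2}{15\varepsilon}}$, discards exactly the decay $-\frac{\pi^2}{15}\cdot\frac{y^2}{\varepsilon(1+y^2)}$ that must absorb this growth.

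Your justification of the tails (``$\Re(cu^2)>0$ along the line since $\arg z$ is small'') is also wrong. The tails are negligible because for $|\mu|\ge\varepsilon^{-\frac35}$ one has $\varepsilon\mu^2\ge\varepsilon^{-\frac15}$, which dominates $\frac{y^2}{\varepsilon}\ll\varepsilon^{-\frac15+2\delta}$. This is why the hypothesis has exponent $\frac25$ (compare Lemma~\ref{lem:comp_U}); the size of $\delta_1$ alone would only require $y\ll\varepsilon^{\frac{11}{30}}$.

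The missing ingredient is to bound the two exponentials jointly, which is the content of the paper's Lemma~\ref{lem:exp}. From the identity above,
\[
\Re\left(\frac{\pi^2}{15z}-\frac{\sqrt5\phi}{2}u_n^2\right)=\frac{\pi^2}{15\varepsilon}-\left(\frac{\pi^2}{15}-\frac{\sqrt5\phi}{2}\log^2(\phi)\right)\frac{y^2}{\varepsilon(1+y^2)}-\frac{\sqrt5\phi}{2}\varepsilon\mu^2\le\frac{\pi^2}{15\varepsilon},
\]
because $\frac{\sqrt5\phi}{2}\log^2(\phi)\approx 0.419<\frac{\pi^2}{15}\approx 0.658$. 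This bound holds at every point of the line, not only at the $u_n$.

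With it, your $O(\varepsilon^{\frac15})$ error sum is controlled. The paper multiplies the pointwise bound by the $\ll\varepsilon^{-\frac35}$ terms to get $\varepsilon^{\frac15-\frac35-\frac12}=\varepsilon^{-\frac9{10}}$; keeping the factor $e^{-\frac{\sqrt5\phi}{2}\varepsilon\mu^2}$ would even give $\varepsilon^{-\frac45}$. Combined with $|u|\ll\varepsilon^{-\frac1{10}}$ on $\mathcal{U}_\varepsilon$, the same joint bound also controls the Euler--Maclaurin remainder.
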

To prove Proposition \ref{prop:S11_narrow}, we need the following lemmas, which are analogues of  \cite[Lemmas 5.2 and 5.3]{BMRS}. Since the proofs follow from straightforward modifications, we omit the proofs here. Let $\mathcal{U}_{\varepsilon}$ be the bijective image of $\mathcal{N}_{\varepsilon}$ under the map
\[
	 n \mapsto u = - \frac{\log (\phi)}{\sqrt{z}} + n\sqrt{z}
\]
which is given in \eqref{eq:n_to_u}.

\begin{lemma}\label{lem:comp_U} 
	If $y \ll \varepsilon^{\frac{2}{5}+\delta}$ for some $\delta>0$ and $P(u)$ is a polynomial in $u$, then, as $z \to 0$, 
	\[
		 \int_{\left(-\frac{\log (\phi)}{\sqrt{z}} + \mathbb{R}\sqrt{z}\right) \setminus \mathcal{U}_{\varepsilon}} \left|P(u) e^{-\frac{\sqrt{5}\phi}{2}u^2} \right| = O\left(u^L\right),
	\]
	for all $L \in \mathbb{N}$.
\end{lemma}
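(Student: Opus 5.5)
The plan is to reduce the integral to a Gaussian tail estimate on the line $\ell_z:=-\frac{\log(\phi)}{\sqrt{z}}+\mathbb{R}\sqrt{z}$. (Here the error term $O(u^L)$ is read, as in \cite[Lemma 5.2]{BMRS}, as $O(\varepsilon^L)$, i.e.\ decay faster than any power of $\varepsilon$.)

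\emph{Step 1 (locate the excluded set).} Parametrize $\ell_z$ by $u=\sqrt{z}\,(t-\frac{\log(\phi)}{z})$ with $t\in\mathbb{R}$, so that $t$ plays the role of $n$. Then $u\notin\mathcal{U}_\varepsilon$ means $|t-\frac{\log(\phi)}{\varepsilon}|>\varepsilon^{-3/5}$. Write
\[
t-\frac{\log(\phi)}{z}=\Bigl(t-\frac{\log(\phi)}{\varepsilon}\Bigr)+\frac{iy}{1+iy}\frac{\log(\phi)}{\varepsilon},
\]
exactly as in \eqref{eq:nu}. Because $y\ll\varepsilon^{\frac25+\delta}$, the second term is $O(\varepsilon^{-\frac35+\delta})$. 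This is smaller by a factor $\varepsilon^{\delta}$ than the excluded radius $\varepsilon^{-3/5}$, and this is precisely why the lemma needs the hypothesis on $y$. Hence on the excluded set
\[
|u|\asymp\sqrt{\varepsilon}\,\Bigl|t-\frac{\log(\phi)}{\varepsilon}\Bigr|\ge c\,\varepsilon^{-1/10}.
\]

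\emph{Step 2 (control the Gaussian on the tilted line).} We need a lower bound for $\Re(u^2)$. Write $u=\sqrt{z}\,s$ with $s=t-\frac{\log(\phi)}{z}$. Then $u^2=zs^2$, with $\arg z=\arctan y=O(\varepsilon^{2/5})$, and $s$ is real up to an imaginary part $O(\varepsilon^{-3/5+\delta})$, which is negligible relative to $|s|\ge \varepsilon^{-3/5}$. Consequently $\arg(u^2)=O(\varepsilon^{\delta})$, so $\Re(u^2)\ge \frac12|u|^2$ for small $\varepsilon$. Since $\phi>0$, this gives
\[
\bigl|P(u)e^{-\frac{\sqrt5\phi}{2}u^2}\bigr|\le |P(u)|\,e^{-c'|u|^2}.
\]

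\emph{Step 3 (integrate the tail).} Integrate over $|u|\ge c\varepsilon^{-1/10}$ along the line, using $|du|=|\sqrt{z}|\,dt$. A standard Gaussian tail bound gives
\[
\ll \int_{c\varepsilon^{-1/10}}^\infty (1+r)^{\deg P}e^{-c'r^2}\,dr\ll e^{-c''\varepsilon^{-1/5}},
\]
which is $O(\varepsilon^L)$ for every $L\in\mathbb{N}$.

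The main obstacle is Step 2: the line $\ell_z$ is tilted and shifted off the real axis, so the sign of $\Re(u^2)$ must be checked. Making sure the imaginary shift coming from $y$ stays dominated by $\varepsilon^{-3/5}$, via the bound on $y$, is the crux; everything else is routine.
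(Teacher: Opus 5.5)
Your proof is correct and follows essentially the intended route; the paper omits the proof as a straightforward adaptation of \cite[Lemma 5.2]{BMRS}. The hypothesis $y\ll\varepsilon^{\frac25+\delta}$ keeps the shift $\frac{iy}{1+iy}\frac{\log(\phi)}{\varepsilon}$ below the cutoff $\varepsilon^{-\frac35}$, so off $\mathcal{U}_\varepsilon$ you get $\Re(u^2)\ge\frac12|u|^2$ and $|u|\gg\varepsilon^{-\frac1{10}}$, and the Gaussian tail then gives $O(\varepsilon^L)$ for every $L$. Your reading of $O(u^L)$ as $O(\varepsilon^L)$, and of $\mathcal{U}_\varepsilon$ as the corresponding segment of the line, is exactly how the lemma is used in the proof of Proposition~\ref{prop:S11_narrow}.
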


\begin{lemma}\label{lem:exp}
	If $y \ll \varepsilon^{\frac{2}{5}+\delta}$ for some $\delta>0$, and $u \in \mathcal{U}_{\varepsilon}$, then we have, as $z \to 0$, 
	\[	
		\left| e^{\frac{\pi^2}{15z} - \frac{\sqrt{5}\phi}{2}  u^2}\right| \leq e^{\frac{\pi^2}{15\varepsilon}}.
	\]
\end{lemma}
%\begin{proof}
%We let $\sqrt{z}:=\varepsilon_0(1+iy+0)$. Then we have $\varepsilon = \varepsilon_0^2(1-y_0^2)$ and $y=\frac{2y_0}{1-y_0}$. As $\varepsilon >0$, we have $1-y_0^2<1$, thus, $|y_0|<1$ and $y \geq 2y_0$. As $y \ll \varepsilon^{1+\lambda+\delta}$, we have $1-y_0^2 = 1+ O(\varepsilon^{2+2\lambda+2\delta})$, and $\varepsilon_0 = \varepsilon^{\frac12}\left( 1+ O(\varepsilon^{2+2\lambda+2\delta})\right)$.
%
%Next, we bound $e^{\Re\left(- \frac{\sqrt{5}\phi}{2}  u^2\right)}$. So, we compute
%\[
%	\Re\left(- \frac{\sqrt{5}\phi}{2}  u^2\right) = \frac{\sqrt{5}\phi}{2} \left( \Im(u)^2 - \Re(u)^2\right).
%\]
%\end{proof}

We now give the proof of Proposition \ref{prop:S11_narrow}.
\begin{proof}[Proof of Proposition \ref{prop:S11_narrow}]
By Proposition \ref{prop:narrow} with $\delta_1=\frac{1}{15}$,
\begin{align*}
	\sum_{n \in \mathcal{N}_{\varepsilon}} \frac{nq^{n^2}}{(q;q)_n} =& \frac{\phi \log(\phi) }{\sqrt{2\pi z}} e^{\frac{\pi^2}{15z} } \sum_{u \in \mathcal{U}_{\varepsilon}}e^{- \frac{\sqrt{5}\phi}{2}  u^2} + O\left( \varepsilon^{\frac{1}{5}}\right) \frac{1}{\sqrt{z}} \sum_{u \in \mathcal{U}_{\varepsilon}} e^{\frac{\pi^2}{15z} - \frac{\sqrt{5}\phi}{2}  u^2}.
\end{align*}
As the summation over $\mathcal{U}_{\varepsilon}$ contains $\ll \varepsilon^{-\frac35}$ terms, it follows from Lemma \ref{lem:exp} that the error term is bounded as
\[
	\varepsilon^{\frac15} \frac{1}{\sqrt{z}} \sum_{u \in \mathcal{U}_{\varepsilon}} e^{\frac{\pi^2}{15z} - \frac{\sqrt{5}\phi}{2}  u^2} \ll \varepsilon^{\frac15-\frac35-\frac12} e^{\frac{\pi^2}{15\varepsilon}}= \varepsilon^{-\frac{9}{10}} e^{\frac{\pi^2}{15\varepsilon}}.
\]

Applying Proposition \ref{EM} with $a=-\frac{\log (\phi)}{\sqrt{z}}$, $z\to \sqrt z$, and $R=1$,
\begin{align*}
	 \sum_{u\in \mathcal{U}_{\varepsilon}} e^{-\frac{\sqrt{5}\phi}{2}u^2} =& \frac{1}{\sqrt z} \int_{-\frac{\log (\phi)}{\sqrt{z}} + \mathbb{R}\sqrt{z}} e^{-\frac{\sqrt{5} \phi}{2}u^2} du + \left(\frac12 - \frac{\sqrt{5}\phi \log(\phi)}{12}\right) e^{-\frac{\sqrt{5} \phi \log^2(\phi)}{2z}} \\
	 &+ O(1)z \int_{-\frac{\log (\phi)}{\sqrt{z}} + \mathbb{R}\sqrt{z}} \left| \left(15\phi^2 u - 5\sqrt{5} \phi^3 u^3\right) e^{-\frac{\sqrt{5}\phi}{2}u^2} \right| du.
\end{align*}
As the integrand in the first integral is holomorphic and has rapid decay, we can shift the path and have
\[
	\int_{-\frac{\log (\phi)}{\sqrt{z}} + \mathbb{R}\sqrt{z}} e^{-\frac{\sqrt{5} \phi}{2}u^2} du  =  \int_{\mathbb{R}}  e^{-\frac{\sqrt{5} \phi}{2}u^2} du =\frac{ \sqrt{2\pi}}{5^{\frac14} \phi^{\frac12}}. 
\]
For the second integral, we use Lemma \ref{lem:comp_U} to obtain that for all $L \in \mathbb{N}$,
\[
	\int_{-\frac{\log (\phi)}{\sqrt{z}} + \mathbb{R}\sqrt{z}} \left| \left(15\phi^2 u - 5\sqrt{5} \phi^3 u^3\right) e^{-\frac{\sqrt{5}\phi}{2}u^2} \right| du = \int_{\mathcal{U}_{\varepsilon}} \left| \left(15\phi^2 u - 5\sqrt{5} \phi^3 u^3\right) e^{-\frac{\sqrt{5}\phi}{2}u^2} \right| du + O\left(\varepsilon^L\right).
\]
As $u\in \mathcal{U}_{\varepsilon}$, from \eqref{eq:u_bound}, we have $|u|\ll \varepsilon^{-\frac{1}{10}}$, and the truncated segment $\mathcal{U}_{\varepsilon}$ has length $\ll \varepsilon^{-\frac{1}{10}}$. Thus, by Lemma \ref{lem:exp},
\[
	e^{\frac{\pi^2}{15z} } \int_{\mathcal{U}_{\varepsilon}}  \left| \left(15\phi^2 u - 5\sqrt{5} \phi^3 u^3\right) e^{-\frac{\sqrt{5}\phi}{2}u^2} \right| du \ll \varepsilon^{-\frac25}e^{\frac{\pi^2}{15\varepsilon}}.
\]
%Also, 
%\[
%	\int_{-\frac{\log (\phi)}{\sqrt{z}} + \mathbb{R}\sqrt{z}}u  e^{-\frac{\sqrt{5} \phi}{2}u^2} du  =  \int_{\mathbb{R}} u e^{-\frac{\sqrt{5} \phi}{2}u^2} du = 0. 
%\]

Therefore, if $y \ll \varepsilon^{\frac{2}{5}+\delta}$ for some $\delta>0$, we have that as $z \to 0$,
\begin{align*}
	\sum_{n \in \mathcal{N}_{\varepsilon}} \frac{nq^{n^2}}{(q;q)_n} &= \frac{\phi^{\frac12} }{5^{\frac14}}\log(\phi)  z^{-1} e^{\frac{\pi^2}{15 z}}  + O\left(\frac{1}{\sqrt{z}} e^{\frac{\pi^2}{15z}-\frac{\sqrt{5}\phi \log^2(\phi)}{2z}} \right)\\
	&\quad + O\left(\sqrt{z} \varepsilon^{-\frac25} e^{\frac{\pi^2}{15\varepsilon}}\right) + O\left(\varepsilon^{L+\frac12} e^{\frac{\pi^2}{15z}}\right) + O\left(\varepsilon^{-\frac{9}{10}} e^{\frac{\pi^2}{15\varepsilon}}\right)\\
	&= \frac{\phi^{\frac12} }{5^{\frac14}}\log(\phi)  z^{-1} e^{\frac{\pi^2}{15 z}} + O\left(\varepsilon^{-\frac{9}{10}} e^{\frac{\pi^2}{15\varepsilon}}\right). \qedhere
\end{align*}
\end{proof}

%%%%%%%%%%%%%%%%%%%%%%%%%%%%%%%%%%%%%%%%%%%%%
\subsection{Error estimate far from the peak}\label{sec:S_far}

In this subsection, we give an error estimate for the sum of summands far from the peak.
\begin{proposition}\label{prop:far}
	For every $L \in \mathbb{N}$, as $\Re(z) \to 0$ in the right half-plane, then
	\[
		\left| \mathcal{E}_{1,1}(q) \right| = \left|\sum_{n \in \mathbb{N} \setminus \mathcal{N}_{\varepsilon}}  \frac{nq^{n^2}}{(q;q)_n} \right| \ll \varepsilon^{L-1} e^{\frac{\pi^2}{15\varepsilon}}.
	\]
\end{proposition}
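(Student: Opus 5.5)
The plan is to bound each term trivially by absolute values and compare with the real-variable summand at $q_0:=e^{-\varepsilon}$. Since $|q^{n^2}| = e^{-\varepsilon n^2}$ and
\[
|(q;q)_n| \ge \prod_{k=1}^n (1-e^{-k\varepsilon}) = (q_0;q_0)_n,
\]
we get
\[
\left|\frac{nq^{n^2}}{(q;q)_n}\right| \le a_n:=\frac{n e^{-\varepsilon n^2}}{(q_0;q_0)_n}.
\]
The whole question is then reduced to the positive sequence $a_n$. It suffices to show $\sum_{n\notin\mathcal{N}_\varepsilon} a_n \ll \varepsilon^{L-1}e^{\pi^2/(15\varepsilon)}$. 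This also explains why the bound holds uniformly in $y$.

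First I would establish that $a_n$ is unimodal with its peak near $n_0:=\log(\phi)/\varepsilon$. The ratio is
\[
\frac{a_{n+1}}{a_n}=\frac{n+1}{n}\cdot\frac{e^{-\varepsilon(2n+1)}}{1-e^{-\varepsilon(n+1)}}.
\]
Writing $x=e^{-\varepsilon n}$, this ratio is essentially $x^2/(1-x)$. That quantity exceeds $1$ for $x>\phi^{-1}$ and is less than $1$ for $x<\phi^{-1}$. So $a_n$ increases for $n< n_0-O(1)$ and decreases for $n> n_0+O(1)$. Consequently, for $n$ outside $\mathcal{N}_\varepsilon$ we have $a_n\le \max(a_{n_-},a_{n_+})$, where $n_\pm$ are the endpoints $n_0\pm\varepsilon^{-3/5}$. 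The exception is the tail $n\gg 1/\varepsilon$, which is handled separately.

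Next I would evaluate $a_{n_\pm}$. Apply Proposition \ref{prop:narrow} with $y=0$ (so $z=\varepsilon$) and $u=\pm\varepsilon^{-3/5}\sqrt{\varepsilon}=\pm\varepsilon^{-1/10}$. Strictly, Proposition \ref{prop:narrow} is stated for $n\in\mathcal{N}_\varepsilon$, and the endpoints are on the boundary, so it applies. This gives
\[
a_{n_\pm}\ll \varepsilon^{-1/2}\exp\!\left(\frac{\pi^2}{15\varepsilon}-\frac{\sqrt5\phi}{2}\varepsilon^{-1/5}\right).
\]
The factor $e^{-c\varepsilon^{-1/5}}$ beats any power of $\varepsilon$. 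Then I would count terms. Summing over $n\le C/\varepsilon$ outside $\mathcal{N}_\varepsilon$ gives at most $O(\varepsilon^{-1})$ terms, each bounded by the above, so this part is $\ll \varepsilon^{L-1}e^{\pi^2/(15\varepsilon)}$ for any $L$.

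For the tail $n>C/\varepsilon$ with $C$ large, I would bound $1/(q_0;q_0)_n\le 1/(q_0;q_0)_\infty\ll e^{\pi^2/(6\varepsilon)}$ using Lemma \ref{lem:eta_asym}. Then
\[
\sum_{n>C/\varepsilon} n e^{-\varepsilon n^2}\ll \varepsilon^{-1}e^{-C^2/\varepsilon},
\]
which is negligible once $C^2>\pi^2/6$.

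The main obstacle is making the unimodality rigorous uniformly. The $(n+1)/n$ factor and the $+1$ in the exponent shift the crossover only by $O(1)$, which is harmless against the window width $\varepsilon^{-3/5}$, but small $n$ needs a separate check. For $n\le 1/\varepsilon^{1/2}$, say, I would instead use the crude bound $a_n\le n/(q_0;q_0)_n \le n\,(n!\,\varepsilon^n)^{-1}\cdot O(1)^n$. This is $e^{O(\varepsilon^{-1/2}\log(1/\varepsilon))}$, which is far below $e^{\pi^2/(15\varepsilon)}\varepsilon^{L}$.
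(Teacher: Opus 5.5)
Your proof is correct, but after the common first step (reducing to $z=\varepsilon$ via $|(q;q)_n|\ge(|q|;|q|)_n$) it takes a different route from the paper.

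\textbf{The paper's route.} It uses no monotonicity. Writing $n=\log(\phi)/\varepsilon+\mu$, it expands $\log(q^{n+1};q)_\infty$ as a double series and inserts $e^x>1+x$ and $\frac{1}{e^x-1}>\frac1x-\frac12$. Combined with Lemma \ref{lem:eta_asym}, this gives for every $n$ the pointwise Gaussian bound $\log\frac{q^{n^2}}{(q;q)_n}<\frac{\pi^2}{15\varepsilon}+\frac12\log\frac{\phi^2\varepsilon}{2\pi}-\frac{\varepsilon}{24}-\frac{\phi}{2}\mu\varepsilon-\mu^2\varepsilon+O(\varepsilon^L)$. It then sums this over the blocks $\varepsilon^{1-r}<|\mu|\le\varepsilon^{-r}$.

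That route is self-contained: it needs only the asymptotic of $(q;q)_\infty$, not the finer expansion of Lemma \ref{lem:Zagier_asym}, and it handles the infinite tail in the same stroke. The termwise inequality does need some care, though. Multiplying the two lower bounds is only legitimate where both are nonnegative, which fails for $\mu>0$ and $k\varepsilon>2$; the effect is exponentially small.

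\textbf{Your route.} You rely on the structure of the sequence instead. It helps to write the ratio as $\frac{a_{n+1}}{a_n}=\frac{n+1}{n}\,e^{\varepsilon}\,g\bigl(e^{-\varepsilon(n+1)}\bigr)$ with $g(x)=x^2/(1-x)$. This shows at once that the ratio exceeds $1$ for all $1\le n<n_0-1$, so your separate small-$n$ check is superfluous; the factor $\frac{n+1}{n}$ only helps. Since $\log g(\phi^{-1}e^{-s})\le -2s$, it also shows the ratio is below $1$ for $n\ge n_0+1$.

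Everything outside the window is then dominated by the values at the extreme integers $m_\pm$ of $\mathcal{N}_\varepsilon$. Use these rather than the non-integral points $n_0\pm\varepsilon^{-3/5}$. At $m_\pm$, Proposition \ref{prop:narrow} with $y=0$ and $u^2\ge\frac12\varepsilon^{-1/5}$ gives the saving $e^{-c\varepsilon^{-1/5}}$.

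The cost of your route is its dependence on Proposition \ref{prop:narrow}, and hence on Lemma \ref{lem:Zagier_asym}, plus a separate crude tail bound. You handle both correctly.

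Your edge-value step is also what a ratio argument genuinely requires. In the paper's treatment of the left tail in Proposition \ref{prop:G_far}, the ratio at the window edge tends to $1$. So it is an explicit edge estimate like yours, not a fixed $\rho<1$, that produces the $e^{-c\varepsilon^{-1/5}}$ saving there.
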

\begin{proof}
Since
\[
	\left| \frac{nq^{n^2}}{(q;q)_n} \right| \leq \frac{n|q|^{n^2}}{(|q|;|q|)_n}, 
\]
it is enough to consider the case when $z$ is real.
Thus, we suppose that $z=\varepsilon$ is real, i.e., $y=0$. Fix $n \in \mathbb{N} \setminus \mathcal{N}_{\varepsilon}$, and let $\mu \in \mathbb{R}$ be such that $n=\frac{\log(\phi)}{\varepsilon}+\mu$. Then we observe that $q^n=\frac{1}{\phi}e^{-\mu \varepsilon}$, and
\[
	\log\left((q^{n+1};q)_\infty \right) = \sum_{m \geq 1} \log\left(1-q^{n+m}\right) = -\sum_{m \geq 1}\sum_{k \geq 1}\frac{q^{k(n+m)}}{k} 
	%= -\sum_{k \geq 1} \frac{1}{k} \frac{q^{nk}}{q^{-k}-1} 
	= -\sum_{k \geq 1} \frac{1}{k \cdot \phi^k}\frac{e^{-k\mu\varepsilon}}{e^{k\varepsilon}-1}.
\]
As $e^x> 1+x$ for all $x \neq 0$ and $\frac{1}{e^x-1}>\frac{1}{x}-\frac{1}{2}$ for $x>0$, we have
\[
	\frac{e^{-k\mu\varepsilon}}{e^{k\varepsilon}-1} > (1-k\mu\varepsilon)\left(\frac{1}{k\varepsilon} -\frac12 \right) = \frac{1}{k\varepsilon} - \left( \mu +\frac12 \right) + \frac{k\mu \varepsilon}{2},
\]
which implies
\[
	\log\left((q^{n+1};q)_\infty \right)  < -\frac{\Li_2(\phi^{-1})}{\varepsilon} + \log(\phi)(2\mu+1) -\frac{\phi}{2} \mu\varepsilon.
\]
Thus, together with Lemma \ref{lem:eta_asym}, we obtain
\[
	\log \left(\frac{1}{(q;q)_n}\right)=\log\left(\frac{(q^{n+1};q)_\infty}{(q;q)_\infty} \right) <  \left(\frac{\pi^2}{6}-\Li_2(\phi^{-1})\right)\frac{1}{\varepsilon}  + \frac{1}{2} \log\left(\frac{\phi^2 \varepsilon}{2\pi} \right) - \frac{\varepsilon}{24} + 2\log(\phi) \mu -\frac{\phi}{2}\mu \varepsilon + O(\varepsilon^L). 
\]
Using
\[
	-n^2 \varepsilon = - \left( \frac{\log (\phi)}{\varepsilon} + \mu \right)^2 \varepsilon 
	= - \frac{\log^2 (\phi)}{\varepsilon}  - 2\log (\phi) \mu  - \mu^2 \varepsilon,
\]
we find 
\[
	\log\left(\frac{q^{n^2}}{(q;q)_n} \right) < \frac{\pi^2}{15\varepsilon} - \frac{1}{2} \log\left(\frac{2\pi}{\phi^2 \varepsilon} \right)  - \frac{\varepsilon}{24}  -\frac{\phi}{2} \mu\varepsilon - \mu^2 \varepsilon + O(\varepsilon^L). 
\]
If $\delta>0$, $\ell< -\frac12-\delta$, and $|\mu|>\varepsilon^\ell$, then there exists $\varepsilon_0>0$, independent of $\ell$, such that, for $\varepsilon < \varepsilon_0$,
\[
	- \frac{1}{2} \log\left(\frac{2\pi}{\phi^2 \varepsilon} \right)  - \frac{\varepsilon}{24}  -\frac{\phi}{2} \mu\varepsilon - \mu^2 \varepsilon + O(\varepsilon^L) < -\frac12 \varepsilon^{2\ell+1}.
\]
Hence, for $\varepsilon < \varepsilon_0$,
\begin{equation}\label{eq:summand_bound}
	\frac{q^{n^2}}{(q;q)_n} < e^{\frac{\pi^2}{15\varepsilon} -\frac{\varepsilon^{2\ell+1}}{2}}. 
\end{equation}

Now, we consider the sum
\[
	 \sum_{n \in \mathbb{N} \setminus \mathcal{N}_{\varepsilon}}  \frac{\varepsilon nq^{n^2}}{(q;q)_n} = \sum_{r\geq 1}\sum_{\substack{n \in \mathbb{N} \setminus \mathcal{N}_{\varepsilon}\\ \varepsilon^{-r+1}<|\mu|\leq \varepsilon^{-r}}} \frac{\varepsilon n q^{n^2}}{(q;q)_n} := \sum_{r\geq 1} A(r).
\]
For $A(1)$, the sum is over $n \in \mathbb{N}$ with $\varepsilon^{-\frac35}<|\mu|\leq \varepsilon^{-1}$. Also, $\varepsilon n=\log(\phi)+\mu \varepsilon \ll 1$. As $A(1)$ has $O(\varepsilon^{-1})$ terms, it follows from \eqref{eq:summand_bound} that
\[
	A(1) \ll \frac{1}{\varepsilon}e^{\frac{\pi^2}{15\varepsilon} -\frac{\varepsilon^{-\frac15}}{2}}.
\]
Next, we consider the summand of $A(r)$ for $r\geq 2$. As $|\mu|>\varepsilon^{-r+1}$, \eqref{eq:summand_bound} gives
\[
	\frac{q^{n^2}}{(q;q)_n} < e^{\frac{\pi^2}{15\varepsilon} -\frac{\varepsilon^{3-2r}}{2}}.
\]
As $|\mu|\leq \varepsilon^{-r}$,  $\varepsilon n=\log(\phi)+\mu \varepsilon \ll \varepsilon^{1-r}$. Since $A(r)$ contains $O(\varepsilon^{-r})$ terms, we arrive at
\[
	A(r) \ll \frac{1}{\varepsilon^{2r-1}}e^{\frac{\pi^2}{15\varepsilon} -\frac{\varepsilon^{3-2r}}{2}}.
\]

Therefore, we conclude that for all $L \in \mathbb{N}$, as $\varepsilon \to 0$,
\[
	\sum_{n \in \mathbb{N} \setminus \mathcal{N}_{\varepsilon}}  \frac{\varepsilon nq^{n^2}}{(q;q)_n} = \sum_{r\geq 1} A(r) \ll e^{\frac{\pi^2}{15\varepsilon}} \left( \frac{e^{ -\frac{\varepsilon^{-\frac15}}{2}}}{\varepsilon}+ \sum_{r\geq 2} \frac{e^{-\frac{\varepsilon^{3-2r}}{2}}}{\varepsilon^{2r-1}} \right) \ll \varepsilon^L e^{\frac{\pi^2}{15\varepsilon}}. \qedhere
\]
\end{proof}

%%%%%%%%%%%%%%%%%%%%%%%%%%%%%%%%%%%%%%%%%%%%%
\subsection{Asymptotic of $S_{1,1}(q)$}\label{sec:S_asymp}

Now we show that the asymptotic of $S_{1,1}(q)$ in Proposition \ref{prop:S11_narrow} is valid for $|y|\ll 1$. To this end, we first derive the asymptotic formula for the summand which is valid in a necessary range. 
\begin{proposition}\label{prop:wider}
	Let $n \in \mathcal{N}_{\varepsilon}$ and $v \in \mathbb{C}$ be such that
	\[
		n = \frac{\log(\phi)}{\varepsilon} + \frac{v}{\sqrt{z}}. 
	\]
	If $y \ll 1$, then we have, uniformly in $v$,
	\begin{multline*}
		\frac{n q^{n^2}}{(q;q)_n} = \frac{\log(w^{-1})}{\sqrt{2\pi z(1-w)}} \exp\left(\frac{\Lambda (y)}{z} + \Log\left(\frac{w^2}{1-w}\right)\frac{v}{\sqrt{z}} - \frac{2-w}{2(1-w)}  v^2   \right) \\
		+ O\left(\varepsilon^{\frac{1}{5}}\right) \frac{1}{\sqrt{z}} \exp\left(\frac{\Lambda (y)}{z} + \Log\left(\frac{w^2}{1-w}\right)\frac{v}{\sqrt{z}} - \frac{2-w}{2(1-w)}  v^2   \right),
	\end{multline*}
	where $w := \phi^{-1-iy}$ and
	\begin{equation}\label{eq:Lamma}
		\Lambda(y) := \frac{\pi^2}{6} - \log^2(\phi) (1+iy)^2 - \Li_2 ( \phi^{-(1+iy)} ).
	\end{equation}
\end{proposition}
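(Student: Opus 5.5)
The plan mirrors the proof of Proposition \ref{prop:narrow}, but now centered at $\log(\phi)/\varepsilon$ rather than $\log(\phi)/z$. Set $\nu := v/\sqrt{z}$. Then
\[
q^n = e^{-nz} = e^{-\log(\phi)(1+iy)}e^{-\nu z} = w e^{-\nu z}, \qquad w=\phi^{-1-iy}.
\]
Since $|w| = \phi^{-1} < 1$, the point $w$ stays in a fixed compact subset of the unit disc for $y \ll 1$. Because $n \in \mathcal{N}_\varepsilon$, we have $\nu = \mu$ with $|\mu| \le \varepsilon^{-3/5}$, so $\nu z \ll \varepsilon^{2/5} = o(1)$. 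This holds now with no restriction on $y$ beyond $y\ll 1$, since $|z|\asymp\varepsilon$ there. So Lemma \ref{lem:Zagier_asym} applies with this $w$, and Lemma \ref{lem:eta_asym} applies as well, because $y \ll 1 \ll \varepsilon^{-1/2+\delta}$. Writing $1/(q;q)_n = (q^{n+1};q)_\infty/(q;q)_\infty = (we^{-\nu z}q;q)_\infty/(q;q)_\infty$, I will obtain
\[
\Log\frac{1}{(q;q)_n} = \frac{\pi^2/6 - \Li_2(w)}{z} - \left(\nu+\tfrac12\right)\Log(1-w) - \frac{\nu^2 z}{2}\frac{w}{1-w} + \frac12\Log\frac{z}{2\pi} + \xi(\nu,z),
\]
where $\xi$ collects $\psi_w$, $-z/24$ and the $O(z^L)$ term.

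Next I expand $q^{n^2}$. Write $n = \log(\phi)(1+iy)/z + \nu$, since $1/\varepsilon = (1+iy)/z$. Then
\[
-n^2 z = -\frac{\log^2(\phi)(1+iy)^2}{z} - 2\log(\phi)(1+iy)\nu - \nu^2 z,
\]
and $-\log(\phi)(1+iy) = \Log w$, assuming the principal branch is consistent for $y\ll 1$ small enough to avoid crossing the cut. Collecting terms:
\begin{itemize}
\item the $1/z$ terms give $\Lambda(y)/z$;
\item the terms linear in $\nu$ give $(2\Log w - \Log(1-w))\nu = \Log\bigl(w^2/(1-w)\bigr)\, v/\sqrt{z}$;
\item the quadratic terms give $-\nu^2 z\bigl(1 + \frac{w}{2(1-w)}\bigr) = -\frac{2-w}{2(1-w)}v^2$;
\item the constants give $(1-w)^{-1/2}\sqrt{z/(2\pi)}$.
\end{itemize}

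For the prefactor, $n = \frac{\log(w^{-1})}{z} + \frac{v}{\sqrt{z}} = \frac{\log(w^{-1})}{z}\bigl(1 + O(\sqrt{z}\,v)\bigr)$. Multiplying by $\sqrt{z/(2\pi(1-w))}$ yields the leading coefficient $\log(w^{-1})/\sqrt{2\pi z(1-w)}$, with a relative error $O(v\sqrt{z}) = O(\nu z) = O(\varepsilon^{2/5})$.

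The remaining, and main, step is to bound $\xi(\nu,z) = O(\varepsilon^{1/5})$ uniformly in $v$. I will use the expansion \eqref{eq:psi}. Its $r$-th term is a polynomial in $\nu$ of degree $r$ (the $\nu^2$ term cancels at $r=2$) times $z^{r-1}$. The coefficients $\Li_{2-r}(w)$ are uniformly bounded because $w$ lies in a compact subset of the disc.
\begin{itemize}
\item For $r=2$: the term is $O(\nu z) = O(\varepsilon^{2/5})$.
\item For $r=3$: it is $O(\nu^3 z^2) = O(\varepsilon^{-9/5+2}) = O(\varepsilon^{1/5})$.
\item For $r \ge 4$: $\nu^r z^{r-1} \ll \varepsilon^{r-1-3r/5} = \varepsilon^{2r/5-1}$, which is even smaller.
\item The tail: $O(z^{R-1})$ is negligible for large fixed $R$.
\end{itemize}
This is where the choice of the exponent $-3/5$ in $\mathcal{N}_\varepsilon$ is decisive, and it gives exactly the $\varepsilon^{1/5}$ of the statement. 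The delicate point is uniformity in $w$ (equivalently in $y$): the constants in Lemma \ref{lem:Zagier_asym} must not depend on $w$. I would justify this by noting that the lemma's proof, via Euler--Maclaurin, gives error constants depending continuously on $w$ in compact subsets of $|w|<1$. Then $e^{\xi} = 1 + O(\varepsilon^{1/5})$, and combining this with the prefactor estimate gives the claimed main term plus an error $O(\varepsilon^{1/5})\, z^{-1/2}\exp(\cdots)$.
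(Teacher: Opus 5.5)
Your proposal is correct and follows the paper's proof essentially step for step: Lemmas \ref{lem:Zagier_asym} and \ref{lem:eta_asym} with $w=\phi^{-1-iy}$, the expansion of $-n^2z$ using $n=\log(\phi)(1+iy)/z+\nu$, collecting terms, and bounding $\xi$ through \eqref{eq:psi} with $\nu\ll\varepsilon^{-3/5}$ (equivalently $v\ll\varepsilon^{-1/10}$); you are more explicit than the paper about the $r=2$, $r=3$ and $r\geq 4$ terms, the $O(\varepsilon^{2/5})$ prefactor error coming from the $v$ term, and uniformity in $w$. Your branch caveat is harmless: for larger bounded $y$, read $\Log\bigl(w^2/(1-w)\bigr)$ as $-2\log(\phi)(1+iy)-\Log(1-w)$, which changes nothing later because only its real part is used in Proposition \ref{prop:S11_asymp}.
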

\begin{proof}
We first observe that
\[
	q^n = \exp \left( -n  z \right) = \exp \left( - \log(\phi) (1+i y) - v \sqrt{z}  \right) = \phi^{-(1+iy)}e^{-v \sqrt{z}}.
\]
We have that $\nu := \frac{v}{\sqrt{z}} \ll \varepsilon^{-\frac{3}{5}}$ by the definition of $\mathcal{N}_{\varepsilon}$.
As $y \ll 1$, we see that $|z| = \varepsilon \sqrt{1 + y^2} \ll \varepsilon$.
Thus, $\nu z = o(1)$.
Since $|w|=|\phi^{-1-iy}| <1$, by Lemmas \ref{lem:Zagier_asym} and \ref{lem:eta_asym},
\begin{align*}
	&\Log \left(\frac{1}{(q;q)_n}\right)=\Log \left(\frac{(q^{n+1};q)_\infty}{(q;q)_\infty} \right) \\
	&= - \Li_2 (w) \frac{1}{z} - \left( \nu + \frac{1}{2} \right) \Log(1-w) - \frac{\nu^2 z}{2} \frac{w}{1-w} + \psi_w (\nu, z) +\frac{\pi^{2}}{6z} + \frac{1}{2} \Log\left(\frac{z}{2\pi} \right) - \frac{z}{24} + \mathcal{E}\\
	&=\left( \frac{\pi^2}{6} - \Li_2 (w) \right) \frac{1}{z} - \left( \frac{v}{\sqrt{z}} + \frac{1}{2} \right) \Log(1-w) 
	 + \frac{1}{2} \Log\left(\frac{z}{2\pi} \right) - \frac{w}{2(1-w)}v^2 - \frac{z}{24} + \xi_y \left(\frac{v}{\sqrt{z}}, z\right), 
\end{align*}
where $\xi_y(\nu, z):= \psi_w (\nu, z)+\mathcal{E}$ has the same asymptotic expansion as $\psi_w(\nu, z)$ since $\mathcal{E}\ll z^L$ for arbitrary $L \in \mathbb{N}$, i.e. for $R \in \mathbb{N}$, as $z\to 0$ with $\Re(z)>0$
\[
	\xi_y(\nu, z) = -\sum_{r=2}^{R-1} \left( B_r(-\nu)-\delta_{r,2} \nu^2 \right) \Li_{2-r}(\phi^{-1-iy}) \frac{z^{r-1}}{r!} + O\left( z^{R-1} \right),
\]
where the coefficient of $\nu^n$ in \eqref{eq:xi_n} is $O\left(z^{2n/3} \right)$ for every $n \geq 0$. For $q^{n^2}$,
\begin{align*}
	-n^2 z = - \left( \frac{\log (\phi)}{\varepsilon} + \frac{v}{\sqrt{z}} \right)^2 z = - \frac{\log^2 (\phi) (1+ iy)^2}{z} - 2  \log(\phi) (1+iy) \frac{v}{\sqrt{z}} - v^2. 	
\end{align*}
In sum, we arrive at 
\begin{align*}
	\frac{n q^{n^2}}{(q;q)_n} 
	&= \left( \frac{\log (\phi)}{\varepsilon} + \frac{v}{\sqrt{z}} \right) \exp\left( - \frac{\log^2 (\phi) (1+ iy)^2}{z} - 2  \log(\phi) (1+iy) \frac{v}{\sqrt{z}} - v^2 \right)\\
	&\quad \times \sqrt{\frac{z}{2\pi(1-w)}} \exp\left( \frac{ \frac{\pi^2}{6} - \Li_2 (w)}{z} - \Log(1-w)\frac{v}{\sqrt{z}}   - \frac{w}{2(1-w)}v^2 - \frac{z}{24} + \xi_y \left(\frac{v}{\sqrt{z}}, z\right) \right)\\
%	&=\frac{1}{\sqrt{2\pi(1-w)}}  \left( \frac{\log(\phi) (1+iy)}{\sqrt{z}} +  v \right)  \exp \left( \frac{\Lambda (y)}{z} + \Log\left(\frac{w^2}{1-w}\right)\frac{v}{\sqrt{z}} - \frac{2-w}{2(1-w)}  v^2   \right)\\
%	&\quad \times \exp\left(-\frac{z}{24} + \xi_y \left(\frac{v}{\sqrt{z}}, z\right) \right).\\
	&= \frac{1}{\sqrt{2\pi(1-\phi^{-1-iy})}}  \left( \frac{\log(\phi) (1+iy)}{\sqrt{z}} +  v \right) \\
	&\quad \times \exp\left(\frac{\Lambda (y)}{z} + \Log\left(\frac{w^2}{1-w}\right)\frac{v}{\sqrt{z}} - \frac{2-w}{2(1-w)}  v^2   \right) \exp\left(-\frac{z}{24} + \xi_y \left(\frac{v}{\sqrt{z}}, z\right) \right).
\end{align*}

As $y \ll 1$, we have $z \ll \varepsilon$. From $n \in \mathcal{N}_{\varepsilon}$, we have $v \ll \varepsilon^{-\frac{1}{10}}$. Thus, from the expansion of $\xi_y (\nu, z)$, we obtain
\[
	-\frac{z}{24} + \xi_y \left( \frac{v}{\sqrt{z}}, z\right) =  O\left(\varepsilon^{\frac{1}{5}}\right),
\]
which holds uniformly in $v$ since the constant in the error term is independent of $v$. Therefore,
\[
	\exp\left(-\frac{z}{24} + \xi_y \left( \frac{v}{\sqrt{z}}, z\right)\right) =  1+O\left(\varepsilon^{\frac{1}{5}}\right)
\] 
yields the desired asymptotic.
\end{proof}

In the following lemma, we show that the contribution of $\Lambda(y)$ is dominated by $\frac{\pi^2}{15}$, where $\Lambda(y)$ is defined in \eqref{eq:Lamma}.

\begin{lemma}\label{lem:s}
	Let $s(y):=\Re\left(\frac{\Lambda(y)}{1+iy} - \frac{\pi^2}{15}\right)$. Then we have the following.
	\begin{enumerate}
		\item $s(y) \leq 0$ for all $y \in \mathbb{R}$, and the equality holds if and only if $y=0$.
		\item As $y \to 0$,
		\[
			s(y) = - \left(\frac{\pi^2}{15} + \left( 3- \frac{\phi}{2}\right)\log^2(\phi) \right)y^2 + O\left(y^4\right).
		\]
	\end{enumerate}
\end{lemma}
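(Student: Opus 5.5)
The plan is to make $s(y)$ completely explicit as a real Fourier-type series, using the series for $\Li_2$ on $|w|<1$. Throughout, put $L:=\log(\phi)$ and $w=\phi^{-1-iy}$, so that $w^k=\phi^{-k}e^{-ikLy}$. Then
\[
	\frac{\Lambda(y)}{1+iy}=\frac{\pi^2}{6(1+iy)}-L^2(1+iy)-\frac{1}{1+iy}\sum_{k\ge1}\frac{\phi^{-k}e^{-ikLy}}{k^2}.
\]
Taking real parts uses $\Re\bigl(e^{-it}(1-iy)\bigr)=\cos t-y\sin t$. After multiplying by $1+y^2>0$ this gives
\[
	(1+y^2)s(y)=\frac{\pi^2}{6}-L^2(1+y^2)-\frac{\pi^2}{15}(1+y^2)-\sum_{k\ge1}\frac{\cos(kLy)-y\sin(kLy)}{k^2\phi^k}.
\]
Next I would invoke the identity already used in the proof of Proposition \ref{prop:narrow}, namely $\frac{\pi^2}{6}-\Li_2(\phi^{-1})=\frac{\pi^2}{15}+L^2$, to cancel the constant terms. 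This leaves
\[
	(1+y^2)s(y)=-\Bigl(L^2+\frac{\pi^2}{15}\Bigr)y^2+\sum_{k\ge1}\frac{1-\cos(kLy)+y\sin(kLy)}{k^2\phi^k}.
\]

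For part (1), I would bound the series termwise using $1-\cos t\le t^2/2$, with equality only at $t=0$, and $y\sin(kLy)\le kL y^2$. The series is then at most
\[
	y^2\Bigl(\frac{L^2}{2}\sum_{k\ge1}\phi^{-k}+L\sum_{k\ge1}\frac{\phi^{-k}}{k}\Bigr)=y^2\Bigl(\frac{\phi L^2}{2}+2L^2\Bigr),
\]
because $\sum_{k\ge1}\phi^{-k}=1/(\phi-1)=\phi$ and $-\log(1-\phi^{-1})=-\log(\phi^{-2})=2L$. It follows that $(1+y^2)s(y)\le -y^2\bigl(\frac{\pi^2}{15}-(1+\frac{\phi}{2})L^2\bigr)$. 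Numerically $(1+\phi/2)L^2\approx 0.42<\pi^2/15\approx0.66$, so the right side is strictly negative for $y\ne0$. Since $s(0)=0$ by the cancellation above, part (1) follows, with equality exactly at $y=0$.

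For part (2), I would Taylor expand each summand: $1-\cos(kLy)=\frac{k^2L^2y^2}{2}+O(k^4y^4)$ and $y\sin(kLy)=kLy^2+O(k^3y^4)$. The error terms are summable against $\phi^{-k}k^{-2}$, so they contribute $O(y^4)$ uniformly. Dividing by $1+y^2=1+O(y^2)$ does not affect the $y^2$ coefficient. The coefficient of $y^2$ is then $-\frac{\pi^2}{15}-L^2+\frac{\phi}{2}L^2+2L^2$, which is to be simplified and matched with the stated constant. The delicate point, and the step I expect to be the main obstacle, is bookkeeping of signs and conjugations: in $\Re\bigl(\Li_2(w)/(1+iy)\bigr)$, in the expansion of $L^2(1+iy)$, and in the branch of $\Log$ implicit in $\phi^{-iy}$. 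I would re-verify each of these carefully against the definition \eqref{eq:Lamma} when matching constants. The inequality in part (1) has comfortable numerical slack, so it should be robust to this bookkeeping.
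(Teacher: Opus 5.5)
Your argument is correct, and for part (1) it takes a different and cleaner route than the paper. The paper works with $f(y)=(1+y^2)s(y)$ for $y>0$ and bounds the $n\ge 2$ terms only by $|\cos|,|\sin|\le 1$, keeping second-order information just for $n=1$. That yields $f(y)<0$ only for $y\ge 1$. The range $0<y\le 1$ is then handled separately, by showing $f''<0$ there and using $f(0)=f'(0)=0$. You apply $1-\cos t\le t^2/2$ and $|\sin t|\le |t|$ to every term, which gives in one step, for all real $y$,
\[
	s(y)\le -c\,\frac{y^2}{1+y^2},\qquad c:=\frac{\pi^2}{15}-\Bigl(1+\frac{\phi}{2}\Bigr)\log^2(\phi)\approx 0.239.
\]
This needs no case split, no derivatives, and no appeal to evenness. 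This quantitative form is exactly what Proposition \ref{prop:S11_asymp} uses, namely $-s(y)\gg y^2$ for $|y|\ll 1$. The constant $c$ is sharp as $y\to 0$.

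For part (2), your method is the paper's (Taylor expansion at $y=0$), and your coefficient is right: it simplifies to $-c$. It does \emph{not} agree with the stated $-\frac{\pi^2}{15}-\bigl(3-\frac{\phi}{2}\bigr)\log^2(\phi)$. Do not try to force agreement by re-examining signs, because the stated constant is wrong.

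As an independent check, expand $\Li_2(\phi^{-1}e^{-t})=\Li_2(\phi^{-1})-2\log(\phi)\,t+\frac{\phi}{2}t^2+O(t^3)$ at $t=i\log(\phi)y$. This gives $\Lambda(y)=\frac{\pi^2}{15}+\bigl(1+\frac{\phi}{2}\bigr)\log^2(\phi)\,y^2+O(y^3)$; the linear terms cancel, as they must at the saddle point. Dividing by $1+iy$ and taking real parts reproduces $-c$.

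Numerically, direct evaluation gives $s(0.1)\approx -2.379\times 10^{-3}$. Your coefficient predicts $-c\cdot 10^{-2}\approx -2.391\times 10^{-3}$, whereas the stated formula predicts about $-1.17\times 10^{-2}$. The value $3-\frac{\phi}{2}$ is what one gets by flipping the sign of the $2\log^2(\phi)$ contribution from the $y\sin(k\log(\phi)y)$ terms.

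So part (2) holds with $-c$ in place of the stated coefficient. Since only the sign and the order $y^2$ are used later, nothing downstream breaks.

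The branch issue you flag does not arise. We have $\phi^{-iy}=e^{-iy\log(\phi)}$ with $\phi>0$, and $|w|=\phi^{-1}<1$, so the $\Li_2$ series converges absolutely and the termwise manipulations are justified.
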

\begin{proof}
(1) Recall that $\frac{\pi^2}{6} - \Li_2 (\phi^{-1}) = \frac{\pi^2}{15} + \log^2(\phi)$ and
\[
	\Lambda(y) = \Li_2 (1) - \log^2(\phi) (1+iy)^2 - \Li_2 ( \phi^{-(1+iy)} ).
\]
Since
\[
	\Li_2 (\phi^{-1-iy}) = \sum_{n \ge 1} \frac{ (1/\phi)^{(1+iy)n} }{n^2} = \sum_{n \ge 1} \frac{(1/\phi)^{i y n}}{\phi^n n^2} = \sum_{n \ge 1} \frac{\exp \left( -  \log(\phi) i y n \right)}{\phi^n n^2},
\]
we find 
\[
	s(y) = - \frac{\pi^2}{6} \frac{y^2}{1+y^2} - \frac{1}{1+y^2} \sum_{n \ge 1} \frac{ \cos \left( \log(\phi) y n \right)}{\phi^n n^2} + \frac{y}{1+y^2} \sum_{n \ge 1} \frac{ \sin \left( \log(\phi) y n \right)}{\phi^n n^2}  + \Li_2 (\phi^{-1}).
\]
As $s(-y)=s(y)$ and $s(0)=0$, it is enough to show that $f(y):= (1+y^2)s(y) \leq 0$ for $y > 0$.

First, using the fact that $|\sin(x)|\leq 1$, $|\cos(x)|\leq 1$, $\sin(x) \leq x$, and $\cos(x) \geq 1-\frac{x^2}{2}$  for all $x \in \mathbb{R}$, we observe
\begin{align*}
	f(y)%&= \frac{\pi^2}{6} -  \sum_{n \ge 1} \frac{ \cos \left( \log(\phi) y n\right)}{\phi^n n^2} +y \sum_{n \ge 1} \frac{ \sin \left( \log(\phi) y n \right)}{\phi^n n^2}  -(1+y^2) \left( \frac{\pi^2}{15}+ \log^2 (\phi)  \right)\\
	%&= \frac{\pi^2}{6} -  \sum_{n \ge 1} \frac{ \cos \left( \log(\phi) y n\right)}{\phi^n n^2} +y \sum_{n \ge 1} \frac{ \sin \left( \log(\phi) y n \right)}{\phi^n n^2}  -(1+y^2) \left( \frac{\pi^2}{6}- \Li_2 (\phi^{-1})  \right)\\
	%&= -  \sum_{n \ge 1} \frac{ \cos \left( \log(\phi) y n\right)}{\phi^n n^2} + y \sum_{n \ge 1} \frac{ \sin \left( \log(\phi) y n \right)}{\phi^n n^2} -\frac{\pi^2}{6} y^2 + \Li_2 (\phi^{-1})(1+y^2) \\
	&= -\frac{\cos(\log(\phi)y)}{\phi} -  \sum_{n \ge 2} \frac{ \cos \left( \log(\phi) y n\right)}{\phi^n n^2} +y \frac{\sin(\log(\phi)y)}{\phi}+ y \sum_{n \ge 2} \frac{ \sin \left( \log(\phi) y n \right)}{\phi^n n^2}\\
	&\quad -\frac{\pi^2}{6} y^2 + \Li_2 (\phi^{-1})(1+y^2) \\
	&\leq  -\frac1{\phi}\left(1-\frac{\log^2(\phi)}{2} y^2\right) +  \sum_{n \ge 2} \frac{1}{\phi^n n^2} +\frac{\log(\phi)}{\phi}y^2 + y \sum_{n \ge 2} \frac{1}{\phi^n n^2}  -\frac{\pi^2}{6} y^2 + \Li_2 (\phi^{-1})(1+y^2) \\
	%&= -\frac{\cos(\log(\phi)y)}{\phi} + \Li_2 (\phi^{-1}) +y \frac{\sin(\log(\phi)y)}{\phi}+ (1+y) \left( \Li_2(\phi^{-1}) - \frac1{\phi}  \right) -y^2 \left( \frac{\pi^2}{6}- \Li_2 (\phi^{-1})  \right)\\
	%&=  -\frac1{\phi}\left(1-\frac{\log^2(\phi)}{2} y^2\right)  +\frac{\log(\phi)}{\phi}y^2+ (1+y) \left( \Li_2(\phi^{-1}) - \frac1{\phi}  \right) -\frac{\pi^2}{6} y^2 + \Li_2 (\phi^{-1})(1+y^2) \\
	&= (2+y) \left( \Li_2(\phi^{-1}) - \frac1{\phi}  \right) -y^2 \left( \frac{\pi^2}{6}- \Li_2 (\phi^{-1}) - \frac{\log^2(\phi)}{2\phi} -\frac{\log(\phi)}{\phi} \right),
\end{align*}
which is negative for  $y \geq 1$.
We also have
\begin{align*}
	f'(y) =& \log(\phi) \sum_{n \ge 1} \frac{ \sin \left( \log(\phi) y n\right)}{\phi^n n} +  \sum_{n \ge 1} \frac{ \sin \left( \log(\phi) y n \right)}{\phi^n n^2} + y \log(\phi) \sum_{n \ge 1} \frac{ \cos \left( \log(\phi) y n \right)}{\phi^n n}\\
	& -2 y \left( \frac{\pi^2}{6}- \Li_2 (\phi^{-1})  \right),
\end{align*}
and 
\begin{align*}
	f''(y) &= \log^2(\phi) \sum_{n \ge 1} \frac{ \cos \left( \log(\phi) y n\right)}{\phi^n} +2\log(\phi)\sum_{n \ge 1} \frac{ \cos \left( \log(\phi) y n \right)}{\phi^n n} - y \log^2(\phi) \sum_{n \ge 1} \frac{ \sin \left( \log(\phi) y n \right)}{\phi^n} \\
	&\quad -2 \left( \frac{\pi^2}{6}- \Li_2 (\phi^{-1})  \right)\\
	& \leq  \log^2(\phi) \sum_{n \ge 1} \frac{1}{\phi^n} + 2\log(\phi) \sum_{n \ge 1} \frac{1}{\phi^n n} + y \log^2(\phi)\sum_{n \ge 1}\frac{1}{\phi^n} -2 \left( \frac{\pi^2}{15}+ \log^2 (\phi)  \right) \\
	%&= \phi \log^2(\phi) +4\log^2(\phi)+y \phi\log^2(\phi)- 2\left( \frac{\pi^2}{15}+ \log^2 (\phi)  \right)\\
	& = y \phi\log^2(\phi) + (\phi+2) \log^2(\phi)  -\frac{2\pi^2}{15}.
\end{align*}
Therefore, for $0 < y \leq 1$,
\[
	f''(y)  \leq y \phi\log^2(\phi) + (\phi+2) \log^2(\phi)  -\frac{2\pi^2}{15} < 0, 
\]
from which we deduce
\[
	f(y) < f(0) + f'(0) y =0.
\]

(2) This can be obtained from the Taylor series expansion at $y=0$.
\end{proof}

Now, we are ready to prove the asymptotic of $S_{1,1}(q)$ as $z=\varepsilon(1+iy) \to 0$ in the bounded cone $|y|\ll 1$.
\begin{proposition}\label{prop:S11_asymp}
	If $y \ll 1$, then we have that as $z \to 0$,
	\[
		S_{1,1}(q) = \frac{\phi^{\frac12} }{5^{\frac14}}\log(\phi)  z^{-1} e^{\frac{\pi^2}{15 z}} +  O\left(\varepsilon^{-\frac{9}{10}} e^{\frac{\pi^2}{15\varepsilon}}\right).
	\]
\end{proposition}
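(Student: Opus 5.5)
The plan is to split $S_{1,1}(q)=\mathcal{S}_{1,1}(q)+\mathcal{E}_{1,1}(q)$ as in \eqref{eqn:decom}. Proposition~\ref{prop:far} already gives $\mathcal{E}_{1,1}(q)\ll \varepsilon^{L-1}e^{\frac{\pi^2}{15\varepsilon}}$ for every $y$, which is absorbed into the error term. So it remains to treat $\mathcal{S}_{1,1}(q)$, and we distinguish two ranges of $y$. We fix a small $\delta>0$.

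\emph{Narrow range.} If $|y|\ll \varepsilon^{\frac25+\delta}$, Proposition~\ref{prop:S11_narrow} gives exactly the claimed main term and error term. Nothing further is needed there.

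\emph{Wide range.} If $\varepsilon^{\frac25+\delta}\ll |y|\ll 1$, we show that $\mathcal{S}_{1,1}(q)$ is itself $O(\varepsilon^{-\frac{9}{10}}e^{\frac{\pi^2}{15\varepsilon}})$. We also check that the main term is $\ll \varepsilon^{-1}e^{\Re(\frac{\pi^2}{15z})}=\varepsilon^{-1}e^{\frac{\pi^2}{15\varepsilon(1+y^2)}}$, so that it is swallowed by the error term.

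\begin{itemize}
\item For the main term, note that $\frac{\pi^2}{15\varepsilon}-\frac{\pi^2}{15\varepsilon(1+y^2)}\gg y^2/\varepsilon\gg \varepsilon^{-\frac15+2\delta}$. Hence the main term is exponentially smaller than $e^{\frac{\pi^2}{15\varepsilon}}$, which settles it.
\item For $\mathcal{S}_{1,1}(q)$, we use Proposition~\ref{prop:wider}. Each summand is $\ll |z|^{-\frac12}\,|\exp(\frac{\Lambda(y)}{z}+\Log(\frac{w^2}{1-w})\frac{v}{\sqrt z}-\frac{2-w}{2(1-w)}v^2)|$.
\item The key point is that this exponent is the exact logarithm, up to $O(\varepsilon^{1/5})$, of the summand, and it is a quadratic polynomial in $\nu=v/\sqrt z$ whose constant term is $\Lambda(y)/z$. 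Since $\frac{1}{z}=\frac{1}{\varepsilon(1+iy)}$, we have $\Re(\Lambda(y)/z)=\frac{1}{\varepsilon}\Re\bigl(\frac{\Lambda(y)}{1+iy}\bigr)=\frac{\pi^2}{15\varepsilon}+\frac{s(y)}{\varepsilon}$.
\item By Lemma~\ref{lem:s}, $s(y)\le -c y^2$ for $|y|\ll 1$ with some $c>0$. This follows from part (2) near $0$ and from part (1) with compactness away from $0$.
\end{itemize}

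The main obstacle is controlling the $\nu$-dependent terms. Here $\nu$ ranges over $n-\frac{\log\phi}{\varepsilon}$ with $|\nu|\le\varepsilon^{-3/5}$, which is real after shifting by the real center, and it is not the natural saddle variable.

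\begin{itemize}
\item We first try to bound the summand directly. Write $n=\frac{\log\phi}{\varepsilon}+\mu$ with $\mu$ real and $|\mu|\le \varepsilon^{-3/5}$. The exponent as a function of $\mu$ is $\frac{\Lambda(y)}{z}+A\mu-\frac{2-w}{2(1-w)}\mu^2 z$ with $A=\Log(\frac{w^2}{1-w})$.
\item Since $|w|=\phi^{-1}$ and $\frac{w^2}{1-w}$ has modulus $1$ only at $y=0$, $\Re A$ is $O(y^2)$ near $y=0$; it vanishes at $y=0$ because $\phi^{-2}=1-\phi^{-1}$. Then $|\Re(A\mu)|\ll y^2\varepsilon^{-3/5}$, which is much smaller than $c y^2/\varepsilon$.
\item The quadratic term satisfies $|\mu^2 z|\ll \varepsilon^{-1/5}$. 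Its real part could be positive, but it is $\ll \varepsilon^{-1/5}$, and this is beaten by $cy^2/\varepsilon\gg\varepsilon^{-1/5+2\delta}$ precisely because $|y|\gg\varepsilon^{2/5+\delta}$ and $\delta>0$. This is the reason for the exponent $\frac{3}{5}$ in $\mathcal{N}_\varepsilon$ and for the threshold $\frac25$.
\end{itemize}

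Hence every summand is $\ll \varepsilon^{-1/2}e^{\frac{\pi^2}{15\varepsilon}-c'y^2/\varepsilon}$. There are $\ll\varepsilon^{-3/5}$ summands, so $\mathcal{S}_{1,1}(q)\ll \varepsilon^{-11/10}e^{\frac{\pi^2}{15\varepsilon}-c'\varepsilon^{-1/5+2\delta}}$, which is $O(\varepsilon^{-\frac{9}{10}}e^{\frac{\pi^2}{15\varepsilon}})$.

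If the estimate $\Re A=O(y^2)$ proves delicate for $y$ of size $1$, we handle $|y|\ge y_0$ separately. There the gap $s(y)\le -c_0<0$ is a fixed constant by Lemma~\ref{lem:s}(1), so it dominates all terms of size $O(\varepsilon^{-3/5})$ and $O(\varepsilon^{-1/5})$ trivially. Combining the two ranges with Proposition~\ref{prop:far} yields the claim.
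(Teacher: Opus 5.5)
Your overall structure matches the paper: far terms via Proposition~\ref{prop:far}, the narrow range via Proposition~\ref{prop:S11_narrow}, and the wide range via Proposition~\ref{prop:wider} and Lemma~\ref{lem:s}. However, the step handling the quadratic term fails as written. You bound $\Re\bigl(-\frac{2-w}{2(1-w)}\mu^2 z\bigr)$ only in absolute value, by $\ll\varepsilon^{-1/5}$. You then claim it is beaten by $cy^2/\varepsilon\gg\varepsilon^{-1/5+2\delta}$. The comparison goes the other way: $\varepsilon^{-1/5+2\delta}=\varepsilon^{2\delta}\cdot\varepsilon^{-1/5}=o(\varepsilon^{-1/5})$.

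So for $|y|$ between $\varepsilon^{2/5+\delta}$ and roughly $\varepsilon^{2/5}$, the saving $s(y)/\varepsilon$ is smaller than a possibly positive quadratic contribution of size $\varepsilon^{-1/5}$. There your per-summand bound $\varepsilon^{-1/2}e^{\frac{\pi^2}{15\varepsilon}-c'y^2/\varepsilon}$ is not justified. The magnitude argument only works once $y^2/\varepsilon\gg\varepsilon^{-1/5}$, i.e.\ $|y|\gg\varepsilon^{2/5-\delta}$. The window cannot be closed by adjusting thresholds: Proposition~\ref{prop:S11_narrow} needs $y\ll\varepsilon^{2/5+\delta}$ with $\delta>0$, and changing the exponent in $\mathcal{N}_\varepsilon$ shifts both thresholds together.

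The missing idea is the \emph{sign} of the quadratic term, which is what the paper uses for small $y$. Since $\mu=v/\sqrt z$ is real, $v^2=\mu^2 z=\mu^2\varepsilon(1+iy)$, hence
\[
\Re\Bigl(-\tfrac{2-w}{2(1-w)}v^2\Bigr)=-\tfrac{\mu^2\varepsilon}{2}\,\Re\Bigl(\tfrac{2-w}{1-w}(1+iy)\Bigr).
\]
Moreover $\Re\bigl(\frac{2-w}{1-w}(1+iy)\bigr)\to\phi+2>0$ as $y\to0$, so this term is $\le 0$ for $|y|\le y_0$. With that, on $\varepsilon^{2/5+\delta}\ll|y|\le y_0$ you only need to beat the linear term, which you already do since $y^2\varepsilon^{-3/5}=o(y^2/\varepsilon)$. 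The saving $e^{-cy^2/\varepsilon}$, with $y^2/\varepsilon\gg\varepsilon^{-1/5+2\delta}\to\infty$ for $\delta<\frac1{10}$, then absorbs the polynomial factor $\varepsilon^{-11/10}$. For $|y|\ge y_0$ your constant-gap argument is fine.

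The paper organizes the same thing as two overlapping cases. It uses the sign argument on $\varepsilon^{1/2-\delta}\ll y\ll\varepsilon^{1/4}$. It uses the magnitude argument only on $\varepsilon^{1/3}\ll y\ll 1$, where $y^2/\varepsilon\gg\varepsilon^{-1/3}$ genuinely dominates $\varepsilon^{-1/5}$.
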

\begin{proof}
By Proposition \ref{prop:far}, we have, for all $L \in \mathbb{N}$, as $z\to 0$,
\[
	\left|\sum_{n \in \mathbb{N} \setminus \mathcal{N}_{\varepsilon}}  \frac{nq^{n^2}}{(q;q)_n} \right| \ll \varepsilon^{L-1} e^{\frac{\pi^2}{15\varepsilon}}.
\]
Thus, the desired asymptotic follows from the following claim:
\[
	z e^{-\frac{\pi^2}{15\varepsilon}} \left(\sum_{n \in \mathcal{N}_{\varepsilon}} \frac{nq^{n^2}}{(q;q)_n} - \frac{\phi^{\frac12} }{5^{\frac14}}\log(\phi)  z^{-1} e^{\frac{\pi^2}{15 z}} \right) \ll \varepsilon^{\frac{1}{10}} .
\]
We verify this claim by splitting the range of $y$ into two pieces, $y \ll \varepsilon^{\frac25+\delta}$ and  $\varepsilon^{\frac12-\delta}\ll y \ll 1$. By choosing $\delta >0$ sufficiently small, these two pieces can cover all necessary range $y \ll 1$. We first note that for $y \ll \varepsilon^{\frac25+\delta}$ for some $\delta>0$, then, by Proposition \ref{prop:S11_narrow}, we have
\[
	\sum_{n \in \mathcal{N}_{\varepsilon}} \frac{nq^{n^2}}{(q;q)_n} - \frac{\phi^{\frac12} }{5^{\frac14}}\log(\phi)  z^{-1} e^{\frac{\pi^2}{15 z}} \ll \varepsilon^{-\frac{9}{10}} e^{\frac{\pi^2}{15\varepsilon}}.
\]

For the remaining range $\varepsilon^{\frac12-\delta}\ll y \ll 1$ for some $\delta>0$, we will show that, for all $L \in \mathbb{N}$,
\begin{equation}\label{eq:first_part}
	z e^{-\frac{\pi^2}{15\varepsilon}}\sum_{n \in \mathcal{N}_{\varepsilon}} \frac{nq^{n^2}}{(q;q)_n} = O\left(\varepsilon^L\right).
\end{equation}
If $y \ll 1$, by Proposition \ref{prop:wider},
\begin{multline*}
	z e^{-\frac{\pi^2}{15\varepsilon}}\sum_{n \in \mathcal{N}_{\varepsilon}} \frac{nq^{n^2}}{(q;q)_n} = \log(w^{-1})\sqrt{\frac{z}{2\pi(1-w)}} e^{\frac{1}{\varepsilon}\left(\frac{\Lambda(y)}{1+iy}-\frac{\pi^2}{15}\right)} \sum_{n \in \mathcal{N}_{\varepsilon}} e^{-\frac{2-w}{2(1-w)}v^2 +\frac{v}{\sqrt{z}} \Log\left(\frac{w^2}{1-w}\right)}\\
	+ O\left(\varepsilon^{\frac15}\right) \sqrt{z} e^{\frac{1}{\varepsilon}\left(\frac{\Lambda(y)}{1+iy}-\frac{\pi^2}{15}\right)} \sum_{n \in \mathcal{N}_{\varepsilon}}  e^{-\frac{2-w}{2(1-w)}v^2 +\frac{v}{\sqrt{z}} \Log\left(\frac{w^2}{1-w}\right)},
\end{multline*}
where $w=\phi^{-1-iy}$. 
Thus, to prove \eqref{eq:first_part}, it is sufficient to show that the exponent
\begin{equation}\label{eq:exp}
	\frac{1}{\varepsilon}\left(\frac{\Lambda(y)}{1+iy}-\frac{\pi^2}{15}\right) -\frac{2-w}{2(1-w)}v^2 +\frac{v}{\sqrt{z}} \Log\left(\frac{w^2}{1-w}\right)
\end{equation}
has negative real part of size $\gg \varepsilon^{-\delta_0}$ for some $\delta_0>0$ since the other terms are bounded as $z\to 0$. 

If $\varepsilon^{\frac12-\delta} \ll y \ll 1$ for some $\delta>0$, then %$ \delta < \frac12$.
the real part of $\frac{1}{\varepsilon}\left(\frac{\Lambda(y)}{1+iy}-\frac{\pi^2}{15}\right)$ is $\frac{s(y)}{\varepsilon}$, which is negative by Lemma \ref{lem:s} (1). We also obtain, as $\varepsilon \to 0$, 
\[
	\frac{s(y)}{\varepsilon} \gg \frac{y^2}{\varepsilon} \gg \varepsilon^{-2\delta}
\]
by Lemma \ref{lem:s} (2). 
From the Taylor series expansion, as $y \to 0$, 
\[
	\Re\left( \Log\left(\frac{w^2}{1-w}\right) \right) = \log \left|\frac{w^2}{1-w} \right| = -\frac{2+\sqrt{5}}{2} \log^2(\phi) y^2 + O\left(y^4\right) \ll y^2.
\]
As $n \in \mathcal{N}_{\varepsilon}$, we have $\left|\frac{v}{\sqrt{z}}\right| \leq \varepsilon^{-\frac35}$, which implies
\[
	\Re\left( \frac{v}{\sqrt{z}} \Log\left(\frac{w^2}{1-w}\right) \right) \ll \varepsilon^{-\frac35} y^2.
\]	
Since $\frac{y^2}{\varepsilon} \gg \varepsilon^{-\frac35} y^2$, $\frac{s(y)}{\varepsilon}$ dominates $\Re\left( \frac{v}{\sqrt{z}} \Log\left(\frac{w^2}{1-w}\right) \right)$. For the term $\frac{2-w}{2(1-w)}v^2$, we consider the following two cases. 
\begin{enumerate}
	\item Suppose $\varepsilon^{\frac12-\delta} \ll y \ll \varepsilon^{\frac14}$. As $v$ is a real multiple of $\sqrt{z}$ and $y\ll \varepsilon^{\frac14}$, we have that for $y \ll \varepsilon^{\frac14}$,
	\[
		\Re \left(\frac{2-w}{1-w} (1+iy)\right) = \frac{2+\phi^{-2}-3\phi^{-1}\cos\left(y\log(\phi)\right)+y\phi^{-1}\sin\left(y\log(\phi)\right)}{|1-w|^2} >0.
	\]
	Thus, $-\frac{2-w}{2(1-w)}v^2$ has negative real part as $z\to 0$.
	
	\item If $\varepsilon^{\frac13} \ll y \ll 1$, then $|z|\ll \varepsilon$. As $n \in \mathcal{N}_{\varepsilon}$, we have $|v|\ll \varepsilon^{-\frac1{10}}$. Thus, with the fact $\left|\frac{2-w}{2(1-w)}\right| \leq \frac{2-\phi^{-1}}{2(1-\phi^{-1})}$, we find $-\frac{2-w}{2(1-w)}v^2 \ll  \varepsilon^{-\frac15}$. %Since $\delta> \frac14> \frac1{10}$, $\frac{s(y)}{y}$ dominates this term.
\end{enumerate}
In either case, the exponent $\frac{s(y)}{\varepsilon}$ has size $\gg \varepsilon^{-2\delta}$ and dominates the other exponents in \eqref{eq:exp} that have positive real part. Hence, we prove the claim \eqref{eq:first_part} by taking $\delta_0=2\delta$.

Next, in the expression $\frac{\phi^{\frac12} }{5^{\frac14}}\log(\phi) e^{\frac{\pi^2}{15 z}-\frac{\pi^2}{15\varepsilon}}$, the exponential part has negative real part of size
\[
	\Re\left(\frac{\pi^2}{15 z}-\frac{\pi^2}{15\varepsilon}\right)% = \frac{\pi^2}{15\varepsilon} \left( \Re\left(\frac{1}{1+iy}\right) -1\right) 
	= - \frac{\pi^2}{15\varepsilon} \frac{y^2}{1+y^2} \gg \varepsilon^{-2\delta}
\]
for $\varepsilon^{\frac12-\delta} \ll y \ll 1$. Thus, it follows that, for $\varepsilon^{\frac12-\delta} \ll y \ll 1$, we have, for all $L \in \mathbb{N}$,
\[
	\frac{\phi^{\frac12} }{5^{\frac14}}\log(\phi) e^{\frac{\pi^2}{15 z}-\frac{\pi^2}{15\varepsilon}} \ll \varepsilon^L.
\]
\end{proof}

%%%%%%%%%%%%%%%%%%%%%%%%%%%%%%%%%%%%%%%%%%%%% 
\subsection{Proof of Theorem \ref{thm:r1_asymp}}\label{sec:r1_asymp}
In this subsection, we prove Theorem \ref{thm:r1_asymp}.
 
\begin{proof}[Proof of Theorem \ref{thm:r1_asymp}]
Let $q=e^{-z}$. By Proposition \ref{prop:S11_asymp}, we have, as $z=\varepsilon(1+iy) \to 0$ for $\varepsilon>0$ and $y \ll 1$,  
\[
	S_{1,1}(q) =\sum_{n\ge 0} r_{1,1}(n) q^n \sim \frac{\sqrt{\phi} }{5^{1/4}}\log(\phi)  z^{-1} e^{\frac{\pi^2}{15 z}}.
\]

Next, we recall Proposition \ref{gen_S2}:
\[
	S_{1,2}(q)=\sum_{n\ge 0} r_{1,2}(n) q^n  =\sum_{n\ge 1} \frac{nq^{n^2}}{(q;q)_n} - \sum_{n\ge 1} \frac{ q^{n^2}}{(q;q)_{n-1}} 
	= S_{1,1}(q) - \frac{1}{(q,q^4;q^5)_\infty} + \frac{1}{(q^2,q^3;q^5)_\infty}.
\]
By \cite[Corollary 1.4]{K}, as $z \to 0$ in the right half-plane,
\[
	\frac{1}{(q,q^4;q^5)_\infty} \sim \frac{\phi^{\frac12}}{5^{\frac14}}e^{\frac{\pi^2}{15z}} \qquad \text{and} \qquad
	\frac{1}{(q^2,q^3;q^5)_\infty} \sim \frac{1}{5^{\frac14}\phi^{\frac12}}e^{\frac{\pi^2}{15z}}.
\]
Thus, $S_{1,2}(q)$ has the same asymptotic as $S_{1,1}(q)$.

From Propositions \ref{gen_S1} and \ref{gen_S2}, we have
\begin{align*}
	(1-q)S_{1,1}(q) &= \sum_{n\ge 1} \frac{nq^{n^2}}{(q^2;q)_{n-1}} \qquad \text{and}\qquad (1-q)S_{1,2}(q) = \sum_{n\ge 1 } \frac{ n q^{n^2+n}}{(q^2;q)_{n-1}} + \sum_{n\ge 2} \frac{ (n-1) q^{n^2}}{(q^2;q)_{n-2}},
\end{align*}
from which we can see that $r_{1,1}(n)$ and $r_{1,2}(n)$ are weakly increasing. Therefore, we employ Theorem \ref{Tauberian} to obtain
\[
	r_{1,1}(n) \sim r_{1,2}(n) \sim \frac{3^{\frac14} \phi^{\frac12}  \log(\phi) }{2 \pi} n^{-\frac14}e^{2\pi \sqrt{\frac{n}{15}}}
\]
as $n \to \infty$.
\end{proof}

%%%%%%%%%%%%
\subsection{Proof of Theorem~\ref{thm:r2_asymp}} \label{sec:r2_asymp}
In this subsection, we prove Theorem \ref{thm:r2_asymp}.

\begin{proof}[Proof of Theorem~\ref{thm:r2_asymp}]
We recall the generating functions of $r_{2,1}(n)$ and $r_{2,2}(n)$ from Propositions \ref{gen_S1} and \ref{gen_S2}:
\begin{align*}
	S_{2,1}(q)&=\sum_{n\ge 0} r_{2,1}(n) q^n = \frac{1}{(q, q^4;q^5)_{\infty}} \frac{q+q^4}{1-q^{5}}, \\
	S_{2,2}(q)&=\sum_{n\ge 0} r_{2,2}(n) q^n  =\frac{1}{(q,q^4;q^5)_{\infty}} \left( \frac{q^4+q^6}{1-q^5}+ \frac{q^2+q^{8}}{1-q^{10}} \right).
\end{align*}
Let $q=e^{-z}$. By \cite[Corollary 1.4]{K}, as $z \to 0$ in the right half-plane,
\begin{align*}
	\frac{1}{(q,q^4;q^5)_\infty} &= \frac{\phi^{\frac12}}{5^{\frac14}}e^{\frac{\pi^2}{15z}-\frac{z}{60}} \frac{1}{\left(e^{-\frac{4\pi^2}{5z}+\frac{2\pi i}{5}}, e^{-\frac{4\pi^2}{5z}-\frac{2\pi i}{5}}; e^{-\frac{4\pi^2}{5z}}\right)_\infty}\\
	&= \frac{\phi^{\frac12}}{5^{\frac14}}e^{\frac{\pi^2}{15z}-\frac{z}{60}} \left( 1+ O\left( e^{-\frac{4\pi^2}{5z}}\right) \right).
\end{align*}
We also obtain that as $z \to 0$,
\[
	\frac{q + q^4}{1-q^5} = \frac{2}{5z} + O(z) \qquad\text{and}\qquad  \frac{q^4+q^6}{1-q^5} + \frac{q^2+q^8}{1-q^{10}} = \frac{3}{5z} -1 + O(z).
\]
%where we use the generating function of the Bernoulli polynomials $B_n(x)$
%\begin{equation}\label{eq:B_poly}
%	\frac{t e^{xt}}{e^t-1}=\sum_{n \geq 0} \frac{B_n(x)}{n!} t^n.
%\end{equation}
Thus, as $z \to 0$ in the right half-plane,
\[
	S_{2,1}(q) = \frac{2\phi^{\frac12}}{5^{\frac54}z}e^{\frac{\pi^2}{15z}} \left( 1+ O(z)\right) \qquad \text{and}\qquad S_{2,2}(q) = \frac{3\phi^{\frac12}}{5^{\frac54}z}e^{\frac{\pi^2}{15z}} \left( 1+ O(z)\right).
\]

Since we have 
\begin{align*}
	(1-q)S_{2,1}(q) &= \frac{1}{(q^6, q^4;q^5)_{\infty}} \frac{q+q^4}{1-q^{5}},\\
	(1-q)S_{2,2}(q) &= \frac{1}{(q^6,q^4;q^5)_{\infty}} \left( \frac{q^4+q^6}{1-q^5}+ \frac{q^2+q^{8}}{1-q^{10}} \right),
\end{align*}
the $q$-series on the right-hand sides above has non-negative coefficients, and so 
$r_{2,1}(n)$ and $r_{2,2}(n)$ are weakly increasing. Hence, by Theorem \ref{Tauberian}, the desired asymptotics are derived.
\end{proof}

%%%%%%%%%%%%%%%%%%%%%%%%%%%%%%%%%%%%%%%%%%%%%
%          Section.   %
%%%%%%%%%%%%%%%%%%%%%%%%%%%%%%%%%%%%%%%%%%%%%
\section{$t$-hooks in two partition sets from the first little G\"{o}llnitz Identity}\label{sec5}

Let us now recall the first little G\"ollnitz identity:
\begin{align*}
	\sum_{n\ge 0} \frac{ q^{n^2+n} (-q^{-1};q^2)_n}{(q^2;q^2)_n} =\frac{1}{(q,q^5,q^6;q^{8})_{\infty}}.
\end{align*}   

For convenience, let
\begin{align*}
	\mathcal{G}_1&:=\{\lambda : \lambda_{i}-\lambda_{i+1} \ge 2, \;\; \lambda_{i}-\lambda_{i+1} >2 \text{ if $\lambda_i\equiv 1 \pmod{2}$}\}
\end{align*}
and
\begin{align*}
	\mathcal{G}_2&:=\{\lambda : \lambda_{i}\equiv 1, 5, 6 \pmod{8}\}.
\end{align*} 
For $j=1,2$, recall from Introduction that
\begin{align*}
	g_{j,t}(n)&=\sum\limits_{\substack{\lambda\in\mathcal{G}_j \\ |\lambda|=n}}\widetilde{g}_{j,t} (\lambda),
\end{align*}
where $\widetilde{g}_{j, t} (\lambda)$ is the number of $t$-hooks in $\lambda \in \mathcal{G}_j$. 

In this section, we study the following generating functions for $t=1,2$:
\begin{equation*}
	G_{j,t}(x,q):=\sum_{n\ge 0} x^{g_{j,t}(n)} q^{n}
\end{equation*}
and
\begin{equation*}
	H_{j,t}(q):=\frac{\partial}{\partial x} G_{j,t}(x,q) \bigg|_{x=1}=\sum_{n\ge 0} g_{j,t}(n) q^n. 
\end{equation*}

%%%%%%%%%%%%%%%%%%%%%%%%%%%%%%%%%%%%%%%%%%%%%
\subsection{$t=1$ case}\label{sec5.1}
Due to the part difference conditions on $\mathcal{G}_j$ for $j=1,2$, we can easily see that
\begin{align*}
	\widetilde{g}_{1,1} (\lambda)&=\ell(\lambda) %: =\text{ the number of parts of $\lambda$},\\
	%\widetilde{g}_{2,1}  (\lambda)&=\ell(\lambda):=\text{ number of parts of $\lambda$},\\
    \quad \text{ and } \quad 
	\widetilde{g}_{2,1}  (\lambda)=d (\lambda).
    %:=\text{ the number of different parts of $\lambda$}.
\end{align*}

In \cite[Eq. (2.20)]{andrews_siam}, Andrews gave the generating function $G_{1,1}(x,q)$:
\[
	G_{1,1}(x,q)= \sum_{n\ge 0} x^{g_{1,1}(n)} q^{n} =\sum_{\lambda\in \mathcal{G}_1}  x^{\ell(\lambda)} q^{|\lambda|} %& =\sum_{n\ge 0} \frac{ z^n q^{n^2+n} (-q;q^2)_n (1+zq^{2n+1})}{(q^2;q^2)_n} \notag \\
    =\sum_{n\ge 0} \frac{x^n q^{n^2+n}(-q^{-1};q^2)_n}{(q^2;q^2)_n}. 
\]
Also, by applying the same method used in Section~\ref{sec3}, we get
\[
	G_{2,1}(x,q)= \sum_{n\ge 0} x^{g_{2,1}(n)} q^{n}   =\sum_{\lambda\in \mathcal{G}_2} x^{d(\lambda)} q^{|\lambda|} = \prod_{n\equiv 1,5,6 \!\!\!\pmod{8}} \frac{1-(1-x)q^n}{1-q^n}.
\]

Taking the derivative of $G_{j,1}(x,q)$ with respect to $x$ at $x=1$, we get the generating functions for $g_{j,1}(n)$ for $j=1,2$. We omit the details. 
\begin{proposition} \label{gen_H1}
    We have
\begin{align*}
	H_{1,1}(q)=\sum_{n\ge 0} g_{1,1}(n)q^n &= \sum_{n\ge 0} \frac{ n q^{n^2+n} (-q^{-1};q^2)_n}{(q^2;q^2)_n}, \\
	H_{2,1}(q)=\sum_{n\ge 0} g_{2,1}(n)q^n &= \frac{1}{(q, q^5, q^6;q^8)_{\infty}} \frac{q+q^5+q^6}{1-q^{8}}. 
\end{align*}
\end{proposition}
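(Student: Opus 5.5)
The plan is to differentiate the two bivariate generating functions $G_{1,1}(x,q)$ and $G_{2,1}(x,q)$ with respect to $x$ and set $x=1$, exactly as in the proof of Proposition~\ref{gen_S1}. By definition $H_{j,1}(q)=\frac{\partial}{\partial x}G_{j,1}(x,q)\big|_{x=1}$, so it suffices to compute these two derivatives.

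For $H_{1,1}(q)$, we use Andrews' formula
\[
G_{1,1}(x,q)=\sum_{n\ge 0}\frac{x^nq^{n^2+n}(-q^{-1};q^2)_n}{(q^2;q^2)_n}.
\]
In this sum the exponent of $x$ counts $\ell(\lambda)=\widetilde g_{1,1}(\lambda)$. Differentiating termwise turns $x^n$ into $nx^{n-1}$, and setting $x=1$ gives the stated Nahm-type sum. Termwise differentiation is justified for $|q|<1$ and $x$ near $1$: the summands are dominated by $|x|^n|q|^{n^2+n}(-|q|^{-1};|q|^2)_n/(|q|^2;|q|^2)_n$, and this majorant decays like $|q|^{n^2}$ times at most exponential growth in $n$. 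Hence the series converges locally uniformly in $x$, so the series is holomorphic in $x$ and may be differentiated term by term.

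For $H_{2,1}(q)$, we write
\[
G_{2,1}(x,q)=\frac{\bigl((1-x)q,(1-x)q^5,(1-x)q^6;q^8\bigr)_\infty}{(q,q^5,q^6;q^8)_\infty}.
\]
Logarithmic differentiation of the numerator gives
\[
\frac{\partial}{\partial x}G_{2,1}(x,q)=G_{2,1}(x,q)\sum_{\substack{n\ge 1\\ n\equiv 1,5,6 \ (\mathrm{mod}\ 8)}}\frac{q^n}{1-(1-x)q^n}.
\]
At $x=1$ the numerator product equals $1$, so $G_{2,1}(1,q)=1/(q,q^5,q^6;q^8)_\infty$, and each summand reduces to $q^n$. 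Summing the three geometric progressions gives $(q+q^5+q^6)/(1-q^8)$, which completes this case.

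No step is a genuine obstacle. The only point needing care is the convergence justification for differentiating the infinite sum and product in $x$. For the product, this follows because $\sum_n |q|^n$ converges, so the logarithm of the product converges uniformly for $x$ in a neighbourhood of $1$.
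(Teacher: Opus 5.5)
Your proposal is correct and matches the paper's (omitted) argument: termwise differentiation of Andrews' sum for $G_{1,1}(x,q)$, and logarithmic differentiation of the product for $G_{2,1}(x,q)$ as in Proposition~\ref{gen_S1}. Your added convergence justifications for interchanging differentiation with the infinite sum and product are sound; the paper does not spell them out.
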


%%%%%%%%%%%%%%%%%%%%%%%%%%%%%%%%%%%%%%%%%%%%%
\subsection{$t=2$ case}\label{sec5.2}

Note that $\widetilde{g}_{1,2}(\lambda)=\ell_{>1}(\lambda)$. It follows from Andrews' results in \cite[Eqs. (2.19), (2.20)]{andrews_siam}
\begin{align*}
	G_{1,2}(x,q)&=\sum_{n\ge 0} x^{g_{1,2} (n) } q^{n}= \sum_{n\ge 0} \frac{ x^n q^{n^2+n} (-q;q^2)_{n+1}}{(q^2;q^2)_n}.
\end{align*}
Moreover, as seen in Section~\ref{sec3.2}, 
\begin{equation*}
\widetilde{g}_{2,2}(\lambda)=d_{>1}(\lambda)+m_{>1}(\lambda). 
\end{equation*}
Thus, we have
\begin{multline*}
	G_{2,2}(x,q)=\left(1+q+\frac{xq^2}{1-q}\right) \prod_{n\ge 1} \left(1+xq^{8n+1}+ \frac{x^2 q^{2(8n+1)}}{1-q^{8n+1}} \right)
	 \prod_{n\ge 0} \left( 1+ xq^{8n+5} +\frac{x^2 q^{2(8n+5)}}{1-q^{8n+5}} \right.\\ \left. + xq^{8n+6} +\frac{x^2 q^{2(8n+6)}}{1-q^{8n+6}}
    + x q^{16n+11} \frac{1-(1-x)q^{8n+5}}{1-q^{8n+5}} \frac{1-(1-x)q^{8n+6}}{1-q^{8n+6}}\right). 
\end{multline*}
From the above generating functions, we then obtain the following.
\begin{proposition}\label{gen_H2}
We have
\begin{align*}
	H_{1,2}(q)&=\sum_{n\ge 0} g_{1,2}(n) q^{n} = \sum_{n\ge 1} \frac{n q^{n^2+n}(-q;q^2)_{n+1}}{ (q^2;q^2)_n},\\
	H_{2,2}(q)&=\sum_{n\ge 0} g_{2,2}(n) q^{n} = \frac{1}{(q,q^5,q^6;q^8)_\infty}\left( \frac{q^5+q^6+q^9}{1-q^8} + \frac{q^2+q^{10}-q^{11}+q^{12}}{1-q^{16}} \right).
\end{align*}
\end{proposition}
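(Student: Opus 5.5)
Both formulas come from differentiating the bivariate generating functions $G_{1,2}(x,q)$ and $G_{2,2}(x,q)$ displayed just before the statement at $x=1$. This is the same logarithmic-differentiation argument as in Proposition~\ref{gen_S1}.

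\textbf{The formula for $H_{1,2}(q)$.} This part is immediate. The series $G_{1,2}(x,q)=\sum_{n\ge0} x^n q^{n^2+n}(-q;q^2)_{n+1}/(q^2;q^2)_n$ depends on $x$ only through $x^n$. Differentiating term by term and setting $x=1$ gives $\sum_{n\ge1} n q^{n^2+n}(-q;q^2)_{n+1}/(q^2;q^2)_n$; the $n=0$ term vanishes. Termwise differentiation is legitimate as a formal power series identity in $q$, since each coefficient of $q^N$ is a polynomial in $x$.

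\textbf{The formula for $H_{2,2}(q)$: value at $x=1$.} Write $G_{2,2}(x,q)=F_0(x)\prod_{n\ge1}F_{8n+1}(x)\prod_{n\ge0}T_n(x)$, where $F_0$ is the first factor, $F_a(x)=1+xq^a+x^2q^{2a}/(1-q^a)$, and $T_n$ is the factor attached to $a=8n+5$, $b=8n+6$. First I will check that at $x=1$ each factor collapses to the expected Euler-type factor:
\begin{itemize}
\item $F_0(1)=1+q+q^2/(1-q)=1/(1-q)$;
\item $F_a(1)=1/(1-q^a)$;
\item $T_n(1)=\frac{1}{1-q^a}+\frac{1}{1-q^b}-1+\frac{q^{a+b}}{(1-q^a)(1-q^b)}$. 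Putting this over the common denominator, the numerator is $(1-q^b)+(1-q^a)-(1-q^a)(1-q^b)+q^{a+b}=1$, so $T_n(1)=1/((1-q^a)(1-q^b))$.
\end{itemize}
Hence $G_{2,2}(1,q)=1/(q,q^5,q^6;q^8)_\infty$. Then
\[
H_{2,2}(q)=G_{2,2}(1,q)\Bigl(\frac{F_0'(1)}{F_0(1)}+\sum_{n\ge1}\frac{F_{8n+1}'(1)}{F_{8n+1}(1)}+\sum_{n\ge0}\frac{T_n'(1)}{T_n(1)}\Bigr).
\]

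\textbf{Logarithmic derivatives.} I will compute each one explicitly.
\begin{itemize}
\item $F_0'(1)/F_0(1)=q^2$.
\item $F_a'(1)/F_a(1)=(1-q^a)\bigl(q^a+2q^{2a}/(1-q^a)\bigr)=q^a+q^{2a}$.
\item For $T_n$, the derivative at $x=1$ is
\[
q^a+\frac{2q^{2a}}{1-q^a}+q^b+\frac{2q^{2b}}{1-q^b}+\frac{q^{a+b}(1+q^a+q^b)}{(1-q^a)(1-q^b)}.
\]
Here the factor $1+q^a+q^b$ comes from the product rule: differentiating the prefactor $x$ gives $1$, and differentiating $1-(1-x)q^a$ and $1-(1-x)q^b$ gives $q^a$ and $q^b$. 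Multiplying by $(1-q^a)(1-q^b)$ gives a polynomial, namely
\[
(q^a+q^{2a})(1-q^b)+(q^b+q^{2b})(1-q^a)+q^{a+b}(1+q^a+q^b),
\]
which simplifies to $q^a+q^b+q^{2a}+q^{2b}-q^{a+b}$.
\end{itemize}

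\textbf{Summing over $n$.} The contributions are:
\begin{itemize}
\item $q^a+q^b$ with $a=8n+5$, $b=8n+6$ ($n\ge0$), and $q^{8n+1}$ ($n\ge1$): together $(q^5+q^6+q^9)/(1-q^8)$;
\item $q^{2a}=q^{16n+10}$ and $q^{2b}=q^{16n+12}$ ($n\ge0$), and $q^{2(8n+1)}=q^{16n+2}$ ($n\ge1$): together with the isolated $q^2$ from $F_0$, these give $(q^2+q^{10}+q^{12})/(1-q^{16})$;
\item $-q^{a+b}=-q^{16n+11}$ ($n\ge0$): gives $-q^{11}/(1-q^{16})$.
\end{itemize}
Adding these yields exactly the rational function in the statement.

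\textbf{Main obstacle.} The only delicate step is the bookkeeping in $T_n'(1)$: the mixed term $xq^{a+b}\cdot(\cdots)$ contributes three pieces, and these must combine with the $2q^{2a}$, $2q^{2b}$ pieces to leave the single cross term $-q^{a+b}$. I would verify this by expanding the first few coefficients of both sides as a check.
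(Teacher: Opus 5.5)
Your proposal is correct and takes the same route as the paper, which says only that both formulas follow by differentiating $G_{1,2}(x,q)$ and $G_{2,2}(x,q)$ at $x=1$, as in Proposition~\ref{gen_S1}, and omits the details. Your computation supplies those details correctly, in particular the logarithmic derivative $q^a+q^b+q^{2a}+q^{2b}-q^{a+b}$ of the $(8n+5,8n+6)$ factor, which produces the $-q^{11}/(1-q^{16})$ term.
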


%%%%%%%%%%%%%%%%%%%%%%%%%%%%%%%%%%%%%%%%%%%%%
%          Section.   %
%%%%%%%%%%%%%%%%%%%%%%%%%%%%%%%%%%%%%%%%%%%%%
\section{Asymptotics for hook numbers arising from Little G\"ollnitz identity}\label{sec6}
In this section, we derive the asymptotic formulas in Theorems \ref{thm:g1_asymp} and \ref{thm:g2_asymp}.
%\begin{proof}[Proof of Theorem \ref{thm:g_ineq}]
%From the fact $\log(1+\sqrt{2}) > \frac{7}{8} > \frac{3}{4}$,
%we conclude that $g_{1,1}(n) > g_{2,1}(n)$ and $g_{1,2}(n) > g_{2,2}(n)$ for sufficiently large $n$ by Theorems \ref{thm:g1_asymp} and \ref{thm:g2_asymp}.
%\end{proof}
In the following subsections, we give the proof for the asymptotic of $g_{1,1}(n)$. Other asymptotics can be derived similarly.

As in the previous section, we first investigate when the magnitude of the summand in the generating function is large. Recall Proposition \ref{gen_H1}:
\[
	H_{1,1}(q)=\sum_{n\ge 0} g_{1,1}(n)q^n = \sum_{n\ge 0} \frac{ n q^{n^2+n} (-q^{-1};q^2)_n}{(q^2;q^2)_n}.
\]
The peak occurs at $q^{2n} \approx \sqrt{2}-1$ as
\[
	\frac{ (n+1) q^{(n+1)^2+(n+1)} (-q^{-1};q^2)_{n+1}}{(q^2;q^2)_{n+1}} \big/ \frac{ n q^{n^2+n} (-q^{-1};q^2)_n}{(q^2;q^2)_n} \approx 1.
\]
Thus, we consider
\[
	\mathcal{N}_\varepsilon := \left\{ n : \left|n-\frac{\log(\sqrt{2}+1)}{2\varepsilon} \right| \leq \varepsilon^{-\frac35} \right\}.
\]
Similarly to \eqref{eqn:decom}, we decompose the generating function as  
\[
	H_{1,1}(q) = \mathcal{H}_{1,1}(q) + \mathcal{E}_{1,1} (q) :=
 \sum_{n \in \mathcal{N}_{\varepsilon}} \frac{ n q^{n^2+n} (-q^{-1};q^2)_n}{(q^2;q^2)_n}+ \sum_{n \in \mathbb{N} \setminus \mathcal{N}_{\varepsilon}} \frac{ n q^{n^2+n} (-q^{-1};q^2)_n}{(q^2;q^2)_n}. 
\]
We first derive the asymptotic formula for $\mathcal{H}_{1,1}(q)$ in a narrow range of $y$ and later extend it to the necessary range. As the proof proceeds similarly to that of Theorem~\ref{thm:r1_asymp}, while we keep the general outline brief, we give  more computational details when technical differences occur.

%%%%%%%%%%%%%%%%%%%%%%%%%%%%%%%%%%%%%%%%%%%%%
\subsection{First Asymptotic estimate for $\mathcal{H}_{1,1}(q)$ and $\mathcal{E}_{1,1}(q)$}\label{sec:H_near}

We start with proving a narrow range estimate for $\mathcal{H}_{1,1}(q)$.
\begin{proposition}\label{prop:H_narrow}
	Let $n \in \mathcal{N}_{\varepsilon}$ and $u \in \mathbb{C}$ be such that
	\[
		n = \frac{\log(\sqrt{2}+1)}{2z} + \frac{u}{\sqrt{z}}.
	\]
	If $y \ll \varepsilon^{\frac13+\delta}$ for some $\delta>0$, then we have, uniformly in $u$,
	\[
		\frac{ n q^{n^2+n} (-q^{-1};q^2)_n}{(q^2;q^2)_n}  =  \frac{\log(\sqrt{2}+1) }{2^{\frac34}\sqrt{\pi z}}\exp \left( \frac{\pi^2}{16z} - 2  u^2   \right) + O\left( \varepsilon^{3\delta_1}\right) \frac{1}{\sqrt{z}} \exp \left( \frac{\pi^2}{16z} - 2u^2   \right),
	\]
	where $\delta_1=\min\{\delta, \frac{1}{15}\}$. 
\end{proposition}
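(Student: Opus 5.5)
We follow the proof of Proposition \ref{prop:narrow}, working with the base $Q:=q^2=e^{-2z}$. Put $\nu:=u/\sqrt z$ and $w:=\sqrt2-1=(\sqrt2+1)^{-1}$. The definition of $n$ then gives
\[
q^{2n}=\exp\bigl(-\log(\sqrt2+1)-2u\sqrt z\bigr)=w\,e^{-2\nu z}.
\]
As in \eqref{eq:nu}, $n\in\mathcal N_\varepsilon$ together with $y\ll\varepsilon^{\frac13+\delta}$ gives $\nu\ll \varepsilon^{\delta-\frac23}+\varepsilon^{-\frac35}$. Hence $\nu z=o(1)$ and, as in \eqref{eq:u_bound}, $u\ll\varepsilon^{-\frac16+\delta_1}$. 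We then write the summand as
\[
\frac{n q^{n^2+n}(-q^{-1};q^2)_n}{(q^2;q^2)_n}
= n\,q^{n^2+n}\,(1+q^{-1})\,(-q;q^2)_\infty\,\frac{(q^{2n+2};q^2)_\infty}{(q^2;q^2)_\infty}\cdot\frac{1}{(-q^{2n-1};q^2)_\infty}
\]
and treat each factor separately.

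For the tails we use Lemma \ref{lem:Zagier_asym} with $z\mapsto 2z$. First, $(q^{2n+2};q^2)_\infty=(w e^{-\nu\cdot 2z}Q;Q)_\infty$. Second, $q^{2n-1}=(-w)\,e^{-2(\nu-\frac32)z}Q$ up to the sign, so $(-q^{2n-1};q^2)_\infty=(w'e^{-\nu'\cdot 2z}Q;Q)_\infty$ with $w'=-w$ and $\nu'=\nu-\frac32$. Both applications are legitimate because $|{\pm w}|<1$. For the complete products, Lemma \ref{lem:eta_asym} gives $1/(q^2;q^2)_\infty$ directly. For $(-q;q^2)_\infty$ we use the eta-quotient $(-q;q^2)_\infty=(q^2;q^2)_\infty^2/\bigl((q;q)_\infty(q^4;q^4)_\infty\bigr)$ and apply Lemma \ref{lem:eta_asym} three times; this contributes $e^{\pi^2/(24z)}$ times a constant, with all lower-order terms $O(z)$. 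We also expand $n\,q^{n^2+n}$, where $-(n^2+n)z=-\frac{\log^2(\sqrt2+1)}{4z}-\frac{\log(\sqrt2+1)}{\sqrt z}u-u^2+O(\sqrt z\,u)+O(1)$.

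Collecting terms, the $1/z$-coefficient is
\[
\frac{\pi^2}{12}+\frac{\pi^2}{24}-\frac{\Li_2(w)-\Li_2(-w)}{2}-\frac{\log^2(\sqrt2+1)}{4}.
\]
This equals $\frac{\pi^2}{16}$ by the Landen-type identity $\Li_2(\sqrt2-1)-\Li_2(1-\sqrt2)=\frac{\pi^2}{8}-\frac12\log^2(\sqrt2+1)$, which we will verify from the standard dilogarithm functional equations. The terms linear in $u/\sqrt z$ cancel exactly because $w$ is the critical point, that is, $w^2=(1-w)/(1+w)$ with the appropriate logarithms. The $u^2$ terms come from $-u^2$ together with the two $-\frac{\nu^2(2z)}{2}\frac{\pm w}{1\mp w}$ contributions, and should combine to $-2u^2$. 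The remaining constants are $(1+q^{-1})\to2$, the factor $\sqrt{2z/(2\pi)}$, the factors $(1-w)^{-1/2}(1+w)^{\dots}$, and the constant from the eta-quotient. Multiplied by the prefactor $n\approx\log(\sqrt2+1)/(2z)$, they should assemble to $\frac{\log(\sqrt2+1)}{2^{3/4}\sqrt{\pi z}}$.

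The error $\xi$ collects the two $\psi$-functions, the constant-order terms and the $\pm z/24$-type terms. We bound it exactly as in Proposition \ref{prop:narrow}: the coefficient of $\nu^k$ is $O(z^{2k/3})$, and $u\ll\varepsilon^{-\frac16+\delta_1}$, so $\xi=O(\varepsilon^{3\delta_1})$ uniformly in $u$. The term $u/\sqrt z$ from $n$ is absorbed the same way. The main obstacle is the bookkeeping that produces exactly $\frac{\pi^2}{16}$ and the constant $2^{-3/4}$, namely the dilogarithm identity and the exact cancellation of the linear terms. We will first check numerically that the $u^2$ coefficient is $-2$, and then do the algebra.
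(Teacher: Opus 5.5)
Your plan follows the paper's proof. It uses the same factorization $(-q^{-1};q^2)_n=(1+q^{-1})(-q;q^2)_\infty/(-q^{2n-1};q^2)_\infty$ with the eta-quotient, which contributes $e^{\pi^2/(24z)}(1+O(z))$ and no extra constant. It applies Lemma~\ref{lem:Zagier_asym} at $2z$ with the pairs $(w,\nu)$ and $(-w,\nu-\frac32)$, and it uses the same dilogarithm identity. The paper cites Lima for that identity; it is Landen's identity for $\Li_2(x)-\Li_2(-x)$ at $x=\sqrt2-1$, the fixed point of $x\mapsto\frac{1-x}{1+x}$. The error is bounded the same way. The bookkeeping you deferred does close, but three points in your sketch are wrong or incomplete, and they concern exactly the constants in the statement.

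First, the coefficient of $u/\sqrt z$ is $\log w-\Log(1-w)+\Log(1+w)=\log\frac{w(1+w)}{1-w}$. It vanishes because $w(1+w)=1-w$, i.e.\ $w^2+2w-1=0$. The relation $w^2=\frac{1-w}{1+w}$ you wrote is false for $w=\sqrt2-1$.

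Second, the $O(1)$ in your expansion of $-(n^2+n)z$ is exactly $-\frac12\log(\sqrt2+1)$, i.e.\ a factor $w^{1/2}$ coming from $q^n$. It must stay in the main term; it cannot go into $\xi$, which has to be $O(\varepsilon^{3\delta_1})$. It is also missing from your list of constants. The exponent on $1+w$ is $-1$, since $\nu'+\frac12=\nu-1$. With both corrections, and using $\frac{w}{1-w}=\frac1{\sqrt2}$ and $1+w=\sqrt2$, the constants give $2(1-w)^{-1/2}(1+w)^{-1}w^{1/2}\sqrt{z/\pi}=2^{1/4}\sqrt{z/\pi}$. Multiplying by $\frac{\log(\sqrt2+1)}{2z}$ gives the claimed $\frac{\log(\sqrt2+1)}{2^{3/4}\sqrt{\pi z}}$. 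Dropping $w^{1/2}$ would leave you off by a factor $(\sqrt2+1)^{1/2}$.

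Third, the quadratic term needs no numerical check. It equals $-(1+\frac{w}{1-w}+\frac{w}{1+w})u^2=-(1+\frac{2w}{1-w^2})u^2=-2u^2$, since $1-w^2=2w$.
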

\begin{proof}
We see that
\[
	q^{2n} = \exp \left( -2n  z \right) = \exp \left( - \log(\sqrt{2}+1) - 2u \sqrt{z}  \right) = (\sqrt{2}-1) e^{- 2u \sqrt{z}}.
\]
Let $\nu := \frac{u}{\sqrt{z}}$. As $n \in \mathcal{N}_{\varepsilon}$, we observe that
\[
	\nu = n - \frac{\log(\sqrt{2}+1)}{2z} = \left( \frac{\log(\sqrt{2}+1)}{\varepsilon} + \mu \right) - \frac{\log(\sqrt{2}+1)}{\varepsilon(1+i y)} = \frac{iy}{1+iy} \frac{\log(\sqrt{2}+1)}{\varepsilon} + \mu \ll \varepsilon^{\delta-\frac23} + \varepsilon^{-\frac35}
\]
for some $\mu$ with $|\mu|\le \epsilon^{-3/5}$.
Therefore, $\nu z = o(1)$ as desired.  We set $w = \sqrt{2}-1$. Then, $|w| <1$, and thus by Lemmas \ref{lem:Zagier_asym} and \ref{lem:eta_asym},
\begin{align*}
	\Log \left(\frac{1}{(q^2;q^2)_n}\right)&=\Log \left(\frac{(q^{2n+2};q^2)_\infty}{(q^2;q^2)_\infty} \right)
	\\
    &= \left(\frac{\pi^2}{6} - \Li_2 (\sqrt{2}-1)\right) \frac{1}{2z} -\left( \frac{u}{\sqrt{z}} + \frac{1}{2} \right) \log(2-\sqrt{2})
	+ \frac{1}{2} \Log\left(\frac{z}{\pi} \right) \\
    & \quad -\frac{u^2}{\sqrt{2}} - \frac{z}{12}
	+ \psi_{\sqrt{2}-1} \left(\frac{u}{\sqrt{z}}, 2z \right) + O\left(z^L\right)
\end{align*}
for all $L \in \mathbb{N}$.
Similarly, with letting $\nu:=\frac{u}{\sqrt{z}}-\frac32$ and using 
$$(-q^{-1};q^2)_\infty = (1+q^{-1}) \frac{(q^2;q^2)_\infty^2}{(q;q)_\infty (q^4;q^4)_\infty},$$
we get
\begin{align*}
	\Log (-q^{-1};q^2)_n &=\Log \left(\frac{(-q^{-1};q^2)_\infty}{(-q^{2n-3+2};q^2)_\infty} \right) \\
	& = \left( \Li_2 (1-\sqrt{2}) +\frac{\pi^2}{12}\right) \frac{1}{2z} + \left( \frac{u}{2\sqrt{z}} + \frac12 \right) \log(2) 
	  -\left(1-\frac1{\sqrt 2}\right)\! \left(\frac{u}{\sqrt{z}}-\frac32\right)^2\! z \\
      &
	\quad + \frac{11z}{24}- \psi_{1-\sqrt{2}} \left(\frac{u}{\sqrt{z}}-\frac32, 2z \right) -\sum_{m \geq 1} \frac{\Li_{1-2m}(-1)}{(2m)!} z^{2m} + O\left(z^L\right). 
\end{align*}
For $q^{n^2+n}$, we have
\[
	-(n^2 +n) z = - \frac{\log^2 (\sqrt{2}+1)}{4z} -\frac{\log(\sqrt{2}+1)}{2}  - \frac{\log (\sqrt{2}+1)}{\sqrt{z}}u -\sqrt{z}u  - u^2.
\]

In sum, using \cite[eq. (11)]{L}
\[
	 \Li_2 (\sqrt{2}-1) -\Li_2 (1-\sqrt{2}) = \frac{\pi^2}{8} - \frac{\log^2 (\sqrt{2}+1)}{2},
\]
we arrive at
\[
	\frac{ n q^{n^2+n} (-q^{-1};q^2)_n}{(q^2;q^2)_n}
	=\left( \frac{\log (\sqrt{2}+1)}{2z} + \frac{u}{\sqrt{z}}  \right)  2^{\frac14}\sqrt{\frac{z}{\pi}}  \exp\left(  \frac{\pi^2}{16z} -2u^2 + \mathcal{E} + O\left(z^L\right) \right),
\]
where 
\[
	\mathcal{E}=-\frac{3\sqrt{2}-4}{2}\sqrt{z}u -\frac{15-9\sqrt{2}}{8}z + \psi_{\sqrt{2}-1} \left(\frac{u}{\sqrt{z}}, 2z \right)
	- \psi_{1-\sqrt{2}} \left(\frac{u}{\sqrt{z}}-\frac32, 2z \right)
	-\sum_{m \geq 1} \frac{\Li_{1-2m}(-1)}{(2m)!} z^{2m}.
\]

As $y \ll \varepsilon^{\frac13+\delta}$, we have $z \asymp \varepsilon$. From $n \in \mathcal{N}_{\varepsilon}$, we have $u \ll \varepsilon^{-\frac16+\delta_1}$. Thus, from \eqref{eq:psi}, we find that $\mathcal{E} \ll \varepsilon^{3\delta_1}$ holds uniformly in $u$ since the constant in the error term is independent of $u$.
Therefore, we conclude that
\begin{align*}
	\frac{ n q^{n^2+n} (-q^{-1};q^2)_n}{(q^2;q^2)_n}
	&= \frac{\log(\sqrt{2}+1) }{2^{\frac34}\sqrt{\pi z}}\left( 1 + O\left(\varepsilon^{\frac13+\delta_1}\right) \right) 
	\exp\left(  \frac{\pi^2}{16z} -2u^2\right) \left(1+O\left(\varepsilon^{3\delta_1}\right)\right). \qedhere
\end{align*}
\end{proof}

Using the bijection
\[
	 n \mapsto u = - \frac{\log (\sqrt{2}+1)}{2\sqrt{z}} + n\sqrt{z},
\]
we can  derive the asymptotic of $H_{1,1}(q)$ for $y$ small. As the proof proceeds similarly to Theorem \ref{prop:S11_narrow}, we omit the proof here.

\begin{proposition}\label{prop:H11_narrow}
	If $y \ll \varepsilon^{\frac{2}{5}+\delta}$ for some $\delta>0$, then we have, as $z \to 0$, 
	\[
		\sum_{n \in \mathcal{N}_{\varepsilon}} \frac{ n q^{n^2+n} (-q^{-1};q^2)_n}{(q^2;q^2)_n} = \frac{\log(\sqrt{2}+1) }{2^{5/4}}  z^{-1} e^{\frac{\pi^2}{16 z}} + O\left(\varepsilon^{-\frac{9}{10}} e^{\frac{\pi^2}{16\varepsilon}}\right).
	\]
\end{proposition}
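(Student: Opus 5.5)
The proof will mirror that of Proposition \ref{prop:S11_narrow}, with Proposition \ref{prop:H_narrow} in place of Proposition \ref{prop:narrow}. Since $y \ll \varepsilon^{\frac25+\delta}$ implies $y \ll \varepsilon^{\frac13+\delta}$, I would apply Proposition \ref{prop:H_narrow} with $\delta_1=\frac{1}{15}$ to every summand with $n\in\mathcal{N}_\varepsilon$. Let $\mathcal{U}_\varepsilon$ be the image of $\mathcal{N}_\varepsilon$ under $n\mapsto u=-\frac{\log(\sqrt2+1)}{2\sqrt z}+n\sqrt z$. Then
\[
\sum_{n\in\mathcal{N}_\varepsilon}\frac{ n q^{n^2+n} (-q^{-1};q^2)_n}{(q^2;q^2)_n}
=\frac{\log(\sqrt2+1)}{2^{\frac34}\sqrt{\pi z}}e^{\frac{\pi^2}{16z}}\sum_{u\in\mathcal{U}_\varepsilon}e^{-2u^2}
+O\left(\varepsilon^{\frac15}\right)\frac{1}{\sqrt z}\sum_{u\in\mathcal{U}_\varepsilon}\left|e^{\frac{\pi^2}{16z}-2u^2}\right|.
\]

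\emph{Error term.} I need the analogue of Lemma \ref{lem:exp}: for $u\in\mathcal{U}_\varepsilon$ and $y\ll\varepsilon^{\frac25+\delta}$, $\bigl|e^{\frac{\pi^2}{16z}-2u^2}\bigr|\le e^{\frac{\pi^2}{16\varepsilon}}$. The argument is the same as for that lemma. Write $\Re(1/z)=\frac{1}{\varepsilon(1+y^2)}$, so the loss $\frac{\pi^2}{16\varepsilon}\frac{y^2}{1+y^2}$ compensates. Also, $u=\sqrt z\,\nu$ with $\nu=\frac{iy}{1+iy}\frac{\log(\sqrt2+1)}{2\varepsilon}+\mu$, so $\Re(u^2)$ can be negative only through the term of size $y^2\varepsilon^{-1}$ coming from the imaginary part of $\nu$. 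Comparing the two $y^2/\varepsilon$ contributions (with $\mu$ real and $|\mu|\le\varepsilon^{-3/5}$, the cross terms are $O(y\varepsilon^{-3/5})$ times $\varepsilon^{1/2}\cdot\varepsilon^{-1/2}$) gives the bound. This step, checking the constants, is where I expect the only real work. The set $\mathcal{U}_\varepsilon$ has $\ll\varepsilon^{-3/5}$ points, so the error term is $\ll\varepsilon^{\frac15-\frac35-\frac12}e^{\frac{\pi^2}{16\varepsilon}}=\varepsilon^{-\frac9{10}}e^{\frac{\pi^2}{16\varepsilon}}$.

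\emph{Main term.} I would apply Proposition \ref{EM} with $f(u)=e^{-2u^2}$, step $\sqrt z$, base point $a=-\frac{\log(\sqrt2+1)}{2\sqrt z}$ and $R=1$, after first extending the sum over $\mathcal{U}_\varepsilon$ to all $n\ge0$. The tails are controlled by the analogue of Lemma \ref{lem:comp_U} (rapid Gaussian decay, since $|u|\gg\varepsilon^{-1/10}$ off $\mathcal{U}_\varepsilon$). The integral can be shifted to $\mathbb{R}$, giving $\int_{\mathbb R}e^{-2u^2}du=\sqrt{\pi/2}$. The boundary term $f(a)/2$ and the derivative term are $O\bigl(e^{-\frac{\log^2(\sqrt2+1)}{2z}}\bigr)$, which is exponentially small relative to the main term. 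The remainder integral $z\int|f'''|$ is bounded on $\mathcal{U}_\varepsilon$ using $|u|\ll\varepsilon^{-1/10}$, a segment length $\ll\varepsilon^{-1/10}$ and the exponential bound above. Multiplied by $z^{-1/2}$, it contributes $O(\varepsilon^{-\frac25}e^{\frac{\pi^2}{16\varepsilon}})$, which is absorbed into $O(\varepsilon^{-\frac9{10}}e^{\frac{\pi^2}{16\varepsilon}})$.

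Combining, the main term is
\[
\frac{\log(\sqrt2+1)}{2^{\frac34}\sqrt{\pi z}}\cdot\frac{1}{\sqrt z}\sqrt{\frac{\pi}{2}}\,e^{\frac{\pi^2}{16z}}=\frac{\log(\sqrt2+1)}{2^{\frac54}}z^{-1}e^{\frac{\pi^2}{16z}},
\]
which is the claimed asymptotic with error $O(\varepsilon^{-\frac9{10}}e^{\frac{\pi^2}{16\varepsilon}})$.
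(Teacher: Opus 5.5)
Your proposal is correct and follows the route the paper intends. The paper omits this proof, saying it proceeds as for Proposition \ref{prop:S11_narrow}: Proposition \ref{prop:H_narrow} with $\delta_1=\frac{1}{15}$, the analogue of Lemma \ref{lem:exp} for the error sum, and Euler--Maclaurin with $\int_{\mathbb{R}}e^{-2u^2}du=\sqrt{\pi/2}$ for the main term. For the step you flagged, put $c=\frac12\log(\sqrt2+1)$; the cross term $2ic\,y\mu$ in $u^2$ is purely imaginary, which matters because a bound of merely $O(y\varepsilon^{-3/5})$ could be unbounded. Hence $\Re\bigl(\frac{\pi^2}{16z}-2u^2\bigr)=\frac{\pi^2}{16\varepsilon}-\bigl(\frac{\pi^2}{16}-\frac{\log^2(\sqrt2+1)}{2}\bigr)\frac{y^2}{\varepsilon(1+y^2)}-2\varepsilon\mu^2\le\frac{\pi^2}{16\varepsilon}$, since $\frac{\pi^2}{16}\approx 0.617>0.388\approx\frac12\log^2(\sqrt2+1)$.
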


Now we give an error estimate for the sum of summands far from the peak.
\begin{proposition}\label{prop:G_far}
	For every $L \in \mathbb{N}$, as $\Re(z) \to 0$ in the right half-plane, 
	\[
		\left| \mathcal{E}_{1,1}(q) \right| = \left|\sum_{n \in \mathbb{N} \setminus \mathcal{N}_{\varepsilon}}  \frac{ n q^{n^2+n} (-q^{-1};q^2)_n}{(q^2;q^2)_n} \right| \ll \varepsilon^{L-1} e^{\frac{\pi^2}{16\varepsilon}}.
	\]
\end{proposition}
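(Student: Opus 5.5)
We follow the proof of Proposition \ref{prop:far}. Since
\[
\left|\frac{ n q^{n^2+n} (-q^{-1};q^2)_n}{(q^2;q^2)_n}\right| \le \frac{ n |q|^{n^2+n} (-|q|^{-1};|q|^2)_n}{(|q|^2;|q|^2)_n},
\]
it suffices to treat real $z=\varepsilon$. Write $n=\frac{\log(\sqrt2+1)}{2\varepsilon}+\mu$ with $\mu\in\mathbb{R}$, so that $q^{2n}=(\sqrt2-1)e^{-2\mu\varepsilon}$. The goal is a pointwise bound of the form
\begin{equation*}
\log\frac{q^{n^2+n}(-q^{-1};q^2)_n}{(q^2;q^2)_n} < \frac{\pi^2}{16\varepsilon} + O(\log(1/\varepsilon)) + O(|\mu|\varepsilon) - c\,\mu^2\varepsilon
\end{equation*}
for some absolute $c>0$, valid uniformly in $\mu$. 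The dyadic-type decomposition $\sum_{r\ge1}A(r)$ over $\varepsilon^{-r+1}<|\mu|\le\varepsilon^{-r}$ then gives $\varepsilon^{L}e^{\pi^2/(16\varepsilon)}$ exactly as before. Here $A(1)$ has $O(\varepsilon^{-1})$ terms, each suppressed by $e^{-c\varepsilon^{-1/5}}$, and $A(r)$ has $O(\varepsilon^{-r})$ terms, each suppressed by $e^{-c\varepsilon^{3-2r}}$. The $\log$ terms are harmless because they only cost polynomial factors in $\varepsilon^{-1}$.

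To get the pointwise bound, we treat the three factors separately.

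\textbf{The factor $1/(q^2;q^2)_n$.} We write it as $(q^{2n+2};q^2)_\infty/(q^2;q^2)_\infty$ and bound the denominator by Lemma \ref{lem:eta_asym} with $z\to2\varepsilon$. For the numerator we expand $\log(q^{2n+2};q^2)_\infty = -\sum_k \frac{w^k e^{-2k\mu\varepsilon}}{k(e^{2k\varepsilon}-1)}$ with $w=\sqrt2-1$. Using $e^{-x}\ge 1-x$ and $\frac1{e^x-1}>\frac1x-\frac12$, we obtain an upper bound of the shape
\[
-\frac{\Li_2(w)}{2\varepsilon}-(\mu+\tfrac12)\log(1-w)+O(\mu\varepsilon).
\]

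\textbf{The factor $(-q^{-1};q^2)_n$.} We write it as $(1+q^{-1})(-q;q^2)_{n-1}$ and use
\[
\log(-q;q^2)_{n-1}=\sum_{m=0}^{n-2}\log(1+q^{2m+1}).
\]
Since $x\mapsto\log(1+e^{-x})$ is decreasing and convex, an integral comparison gives an upper bound
\[
\log(-q;q^2)_{n-1}\le \frac1{2\varepsilon}\int_0^{(2n-1)\varepsilon}\log(1+e^{-t})\,dt + O(1).
\]
By $\int_0^{X}\log(1+e^{-t})dt=-\Li_2(-1)+\Li_2(-e^{-X})$, this equals
\[
\frac{1}{2\varepsilon}\bigl(\tfrac{\pi^2}{12}+\Li_2(-w e^{-2\mu\varepsilon})\bigr)+O(1).
\]
We then need an upper bound on $\Li_2(-we^{-2\mu\varepsilon})$ that is exact to first order in $\mu\varepsilon$. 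Concavity of $x\mapsto\Li_2(-we^{-x})$ on $x\ge -\log(1/w)$ gives $\Li_2(-we^{-x})\le \Li_2(-w)+x\log(1+w)$. The region $x<-\log(1/w)$ corresponds to $n$ far above the peak, where $q^{2n}$ is large. There we can fall back on the crude bound $(-q^{-1};q^2)_n\le(-q^{-1};q^2)_\infty\ll e^{\pi^2/(24\varepsilon)}$, and $q^{n^2}$ then kills everything.

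\textbf{Assembling, and the main obstacle.} Adding these bounds to $-(n^2+n)\varepsilon$, the $1/\varepsilon$ terms combine via the identity $\Li_2(\sqrt2-1)-\Li_2(1-\sqrt2)=\frac{\pi^2}{8}-\frac{\log^2(\sqrt2+1)}2$ to $\frac{\pi^2}{16\varepsilon}$. The terms linear in $\mu$ cancel because the peak was chosen as the stationary point: indeed $\log\frac{(1+w)}{(1-w)}=\log(\sqrt2+1)$, matching $-2\log(\sqrt2+1)\mu/2$ from $q^{n^2}$. The step I expect to be the main obstacle is ensuring that a genuinely negative $-c\mu^2\varepsilon$ survives. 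The $q^{n^2}$ factor supplies $-\mu^2\varepsilon$, but the linearizations above discard quadratic terms of the opposite sign. I would therefore redo the concavity step keeping the second-order term. Concretely, I would show that the full logarithm, as a function $F$ of $x=2\mu\varepsilon$, satisfies $F(x)\le F(0)-c\min(x^2,|x|)/\varepsilon$. This follows from checking that $F''(x)\le -c'$ on a neighbourhood of $0$ (it is $-\tfrac12-\tfrac{w}{2(1-w)}\cdot\ldots$ and computable explicitly), together with concavity of $F$ elsewhere. That estimate is exactly what feeds the summation over $A(r)$ above.
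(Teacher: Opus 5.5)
Your argument is correct, but it is organized differently from the paper's.

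\textbf{How the two routes differ.} The paper treats the two sides of the window separately. For $\mu>0$ it expands $-\log(-q^{2n-1};q^2)_\infty$ as a series in $k$, splits it into odd and even $k$, and bounds each piece term by term. For $\mu<0$ it gives up on pointwise bounds. Instead it uses that $a_{n+1}/a_n$ is decreasing in $n$, so the terms below the window decay geometrically from the peak value $M\ll\varepsilon^{-1/2}e^{\pi^2/(16\varepsilon)}$, which it takes from Proposition~\ref{prop:H_narrow}.

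You write $(-q^{-1};q^2)_n=(1+q^{-1})(-q;q^2)_{n-1}$, compare with $\frac{1}{2\varepsilon}\int_0^{(2n-1)\varepsilon}\log(1+e^{-t})\,dt$, and use the tangent line of the concave function $x\mapsto\Li_2(-we^{-x})$. This gives one pointwise bound valid for both signs of $\mu$, with the first-order term in $\mu$ kept exactly. So you need neither the ratio argument nor the narrow-range asymptotic.

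\textbf{What your route buys.} The paper's even-$k$ estimate $\Sigma_2<\frac{1}{2\varepsilon}\sum_{k\ \mathrm{even}}\frac{(\sqrt2-1)^k}{k^2}$ discards the contribution $\frac{\mu}{2}\log(1-w^2)=\frac{\mu}{2}\log(2\sqrt2-2)<0$, where $w=\sqrt2-1$. That is why its combined inequality still contains $-\frac12\log(2\sqrt2-2)\,\mu\approx 0.094\,\mu>0$. For $\varepsilon^{-3/5}<\mu\ll\varepsilon^{-1}$ this positive term is not beaten by $-\mu^2\varepsilon$, so the paper's claimed bound $\frac{\pi^2}{16\varepsilon}-\frac12\varepsilon^{2\ell+1}$ does not follow from its display in that range. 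In your bound the linear terms cancel identically, so this problem does not arise.

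\textbf{Two corrections to your commentary.}

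First, the ``main obstacle'' you describe does not exist. A valid upper bound can only discard nonpositive quantities. Concretely, $-\frac{1}{2\varepsilon}\Li_2(we^{-x})$ and $\frac{1}{2\varepsilon}\Li_2(-we^{-x})$ are both concave. Their second-order terms are $-\frac{\mu^2\varepsilon}{\sqrt2}$ and $-\frac{w}{1+w}\mu^2\varepsilon$, which have the same sign as the $-\mu^2\varepsilon$ coming from $q^{n^2+n}$. The true total coefficient is $-2$, matching the $-2u^2$ in Proposition~\ref{prop:H_narrow}. So your assembled estimate already reads $\frac{\pi^2}{16\varepsilon}-\mu^2\varepsilon+O(\log\frac1\varepsilon+|\mu|\varepsilon)$. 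For $|\mu|\ge\varepsilon^{-3/5}$ and small $\varepsilon$ this is at most $\frac{\pi^2}{16\varepsilon}-\frac12\mu^2\varepsilon$. The extra analysis of $F''$ is correct but redundant.

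Second, $\frac{d^2}{dx^2}\Li_2(-we^{-x})=-\frac{we^{-x}}{1+we^{-x}}<0$ for every real $x$, so concavity holds everywhere. Moreover, the region $x<-\log(1/w)$ means $q^{2n}>1$, i.e.\ $n<0$, not $n$ far above the peak. That region is empty, so no fallback bound is needed there.
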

\begin{proof}
Since
\[
	\left|  \frac{ n q^{n^2+n} (-q^{-1};q^2)_n}{(q^2;q^2)_n}\right| \leq  \frac{ n |q|^{n^2+n} (-|q|^{-1}; |q|^2)_n}{(|q|^2;|q|^2)_n}, 
\]
it is sufficient to consider the case when $z$ is real.
Thus, suppose that $z=\varepsilon$ is real, i.e., $y=0$. Fix $n \in \mathbb{N} \setminus \mathcal{N}_{\varepsilon}$, and let $\mu \in \mathbb{R}$ be such that $n = \frac{\log(\sqrt{2}+1)}{2\varepsilon}+\mu$.

First, we consider the sum over $n$ with $\mu>0$. Then $q^{2n}=(\sqrt{2}-1)e^{-2\mu \varepsilon}$, and
\[
	\log\left((q^{2n+2};q^2)_\infty \right) = \sum_{m \geq 1} \log\left(1-q^{2n+2m}\right) = -\sum_{m \geq 1}\sum_{k \geq 1}\frac{q^{k(2n+2m)}}{k} 
	= -\sum_{k \geq 1} \frac{(\sqrt{2}-1)^k}{k}\frac{e^{-2k\mu\varepsilon}}{e^{2k\varepsilon}-1}.
\]
As $e^x> 1+x$ for all $x \neq 0$ and $\frac{1}{e^x-1}>\frac{1}{x}-\frac{1}{2}$ for $x>0$, we have
\[
	\frac{e^{-2k\mu\varepsilon}}{e^{2k\varepsilon}-1} > (1-2k\mu\varepsilon)\left(\frac{1}{2k\varepsilon} -\frac12 \right) = \frac{1}{2k\varepsilon} - \left( \mu +\frac12 \right) + k\mu \varepsilon,
\]
which implies
\[
	\log\left((q^{2n+2};q^2)_\infty \right)  < -\frac{\Li_2(\sqrt{2}-1)}{2\varepsilon} - \log(2-\sqrt{2})\left( \mu +\frac12 \right)-\frac{\mu\varepsilon}{\sqrt{2}} .
\]
Together with Lemma \ref{lem:eta_asym}, we find
\begin{align}
	 \log\left(\frac{1}{(q^2;q^2)_n}\right)& =\log\left(\frac{(q^{2n+2};q^2)_\infty}{(q^2;q^2)_\infty} \right) \notag\\
    & <  \left(\frac{\pi^2}{6}-\Li_2(\sqrt{2}-1)\right)\frac{1}{2\varepsilon} + \frac{1}{2} \log\left(\frac{\varepsilon}{\pi(2-\sqrt{2})} \right) \notag \\ & \quad - \frac{\varepsilon}{12} -\log(2-\sqrt{2}) \mu -\frac{\mu\varepsilon}{\sqrt{2}}+ O\left(\varepsilon^L\right). \label{pro6.3_1}
\end{align}
We also have
\[
	- \log\left((-q^{2n-3+2};q^2)_\infty\right)% = - \sum_{m \geq 1} \log(1+q^{2n-3+2m}) =  \sum_{m \geq 1} \sum_{k \geq 1} \frac{(-1)^k q^{k(2n-3+2m)}}{k} 
	= \sum_{k \geq 1} \frac{(-1)^{k} (\sqrt{2}-1)^k}{k} \frac{e^{-2k\mu \varepsilon +3k\varepsilon}}{e^{2k\varepsilon}-1} =:\Sigma_1 + \Sigma_2,
\]
where
\[
	\Sigma_1:= - \sum_{\substack{k \geq 1\\k: \text{odd}}} \frac{(\sqrt{2}-1)^k}{k} \frac{e^{-2k\mu \varepsilon +3k\varepsilon}}{e^{2k\varepsilon}-1} \qquad\text{and}\qquad  
	\Sigma_2:= \sum_{\substack{k \geq 1\\k: \text{even}}} \frac{(\sqrt{2}-1)^k}{k} \frac{e^{-2k\mu \varepsilon +3k\varepsilon}}{e^{2k\varepsilon}-1}.
\]
For the sum over odd $k$, we obtain
\begin{align*}
	\Sigma_1
	 %&- \sum_{\substack{k \geq 1\\k: \text{odd}}} \frac{(\sqrt{2}-1)^k}{k} \frac{e^{-2k\mu \varepsilon +3k\varepsilon}}{e^{2k\varepsilon}-1}  \\
	 &< - \frac1{2\varepsilon} \sum_{\substack{k \geq 1\\k: \text{odd}}} \frac{(\sqrt{2}-1)^k}{k^2} - (1-\mu) \sum_{\substack{k \geq 1\\k: \text{odd}}} \frac{(\sqrt{2}-1)^k}{k} + \varepsilon \left( \frac32-\mu \right) \sum_{\substack{k \geq 1\\k: \text{odd}}} (\sqrt{2}-1)^k\\
	 &= - \frac1{2\varepsilon} \sum_{\substack{k \geq 1\\k: \text{odd}}} \frac{(\sqrt{2}-1)^k}{k^2} + \frac{1}{2}(1-\mu) \log(\sqrt{2}-1) + \frac{\varepsilon}{2} \left( \frac32-\mu \right).
\end{align*}
For small $\varepsilon$, we have $\mu >\frac{3}{2}$. Thus, we use $e^{k(-2\mu+3) \varepsilon} < 1$ and $e^{2k\varepsilon}-1>2k\varepsilon$ to get
\[
	%\sum_{\substack{k \geq 1\\k: \text{even}}} \frac{(\sqrt{2}-1)^k}{k} \frac{e^{-2k\mu \varepsilon +3k\varepsilon}}{e^{2k\varepsilon}-1} 
	%< \sum_{\substack{k \geq 1\\k: \text{even}}} \frac{(\sqrt{2}-1)^k}{k} \frac{e^{-2k\mu \varepsilon +3k\varepsilon}}{2k\varepsilon} 
	\Sigma_2 < \frac1{2\varepsilon}\sum_{\substack{k \geq 1\\k: \text{even}}} \frac{(\sqrt{2}-1)^k}{k^2}.
\]
Hence,
\begin{align}
	\log\big( (-q^{-1};q^2)_n\big)& =\log \left(\frac{(-q^{-1};q^2)_\infty}{(-q^{2n-3+2};q^2)_\infty} \right) \notag \\
    &< \frac{\Li_2(1-\sqrt{2})}{2\varepsilon} + \frac{1}{2}(1-\mu) \log(\sqrt{2}-1) + \frac{\varepsilon}{2} \left( \frac32-\mu \right) \notag\\
	 &\quad + \frac{\pi^{2}}{24\varepsilon} + \log(2) + \frac{11\varepsilon}{24} -\sum_{m \geq 1} \frac{\Li_{1-2m}(-1)}{(2m)!} \varepsilon^{2m}+ O\left(\varepsilon^L\right). \label{pro6.3_2}
\end{align}
For $q^{n^2+n}$, we have
\begin{equation}
	-(n^2+n) \varepsilon = - \frac{\log^2 (\sqrt{2}+1)}{4\varepsilon}  - \log (\sqrt{2}+1) \left( \mu+\frac12\right)  - \mu^2 \varepsilon   - \mu \varepsilon. \label{pro6.3_3}
\end{equation}
Therefore, putting \eqref{pro6.3_1}, \eqref{pro6.3_2} and \eqref{pro6.3_3} together, we find that for $\mu>0$,
\begin{multline*}
	\log\left(\frac{q^{n^2+n} (-q^{-1};q^2)_n}{(q^2;q^2)_n} \right) <%&  \frac{\pi^2}{16\varepsilon}  + \frac{1}{2} \log\left(\frac{\varepsilon}{\pi(2-\sqrt{2})} \right) - \frac{\varepsilon}{12} -\log(2-\sqrt{2}) \mu -\frac{\mu\varepsilon}{\sqrt{2}}\\
	%& + \frac{1}{2}(1-\mu) \log(\sqrt{2}-1) + \frac{\varepsilon}{2} \left( \frac32-\mu \right)+ \log(2) + \frac{11\varepsilon}{24} \\
	%& -\sum_{m \geq 1} \frac{\Li_{1-2m}(-1)}{(2m)!} \varepsilon^{2m}- \log (\sqrt{2}+1) \left( \mu+\frac12\right)  - \mu^2 \varepsilon   - \mu \varepsilon+ O\left(\varepsilon^L\right) \\
	 \frac{\pi^2}{16\varepsilon} -\frac{1}{2}\log(2\sqrt{2}-2)\mu - \frac{1}{2} \log\left(\frac{\pi} {\varepsilon(4-2\sqrt{2})}\right)  - \left(\frac{3+\sqrt{2}}{2}\mu-\frac{9}{8}\right)\varepsilon \\
	 - \mu^2 \varepsilon  -\sum_{m \geq 1} \frac{\Li_{1-2m}(-1)}{(2m)!} \varepsilon^{2m} + O\left(\varepsilon^L\right). 
\end{multline*}
If $\delta>0$, $\ell< -\frac12-\delta$, and $|\mu|>\varepsilon^\ell$, then there exists $\varepsilon_0>0$, independent of $\ell$, such that for $\varepsilon < \varepsilon_0$,
\begin{align*}
	\log\left(\frac{ q^{n^2+n} (-q^{-1};q^2)_n}{(q^2;q^2)_n} \right) <&  \frac{\pi^2}{16\varepsilon} -\frac12 \varepsilon^{2\ell+1}.
\end{align*}
As in the proof of Proposition \ref{prop:far}, we obtain
\[
	\sum_{n > \frac{\log(\sqrt{2}+1) }{2\varepsilon} + \varepsilon^{-\frac35}}  \frac{n q^{n^2+n} (-q^{-1};q^2)_n}{(q^2;q^2)_n}  \ll \varepsilon^{L-1} e^{\frac{\pi^2}{16\varepsilon}}
\]
for all $L \in \mathbb{N}$ as $\varepsilon \to 0$.

Next, we consider the sum over $n$ with $\mu<0$. We let $a_n := \frac{ n q^{n^2+n} (-q^{-1};q^2)_n}{(q^2;q^2)_n}$. Then the peak occurs at $q^{2n} \approx \sqrt{2}-1$ and
\[
	\frac{a_{n+1}}{a_{n}} 
	%&:= \frac{ (n+1) q^{(n+1)^2+(n+1)} (-q^{-1};q^2)_{n+1}}{(q^2;q^2)_{n+1}} \cdot \frac{(q^2;q^2)_{n}} { n q^{n^2+n} (-q^{-1};q^2)_{n}}\\
	= \frac{n+1}{n} \cdot \frac{q^{2n+2} (1+ q^{2n-1})}{1- q^{2n+2} } 
	= \frac{n+1}{n} \cdot \frac{1+ q^{-1} \exp(- 2n\varepsilon )}{ q^{-2} \exp (2n\varepsilon) -1 } \approx
	\frac{1+ \exp(- 2n\varepsilon ) }{ \exp (2n\varepsilon) -1 },
\]	
which is decreasing as $n$ grows for a fixed $\varepsilon >0$. Thus, for
\[
	n < \frac{\log(\sqrt{2}+1)}{2\varepsilon} - \varepsilon^{-\frac{3}{5}},
\]
we can find $\rho \in (0,1)$ so that 
\[
	\frac{a_n}{a_{n-1}} > \frac{1 + (\sqrt{2}-1) \exp(-2 \varepsilon^{\frac25} )}{ (\sqrt{2}+1) \exp(2\varepsilon^{\frac25})-1 } \geq \frac{1}{\rho}.
\]
Therefore, letting the value of $a_n$ at the peak be $M$, we obtain that for all $L \in \mathbb{N}$ as $\varepsilon \to 0$,
\[
	\sum_{n < \frac{\log(\sqrt{2}+1) }{2\varepsilon} - \varepsilon^{-\frac35}} a_n 
	\le \sum_{n < \frac{\log(\sqrt{2}+1) }{2\varepsilon} - \varepsilon^{-\frac35}} \rho^{\varepsilon^{-\frac35} } \rho^n M \ll  \varepsilon^{L} e^{\frac{\pi^2}{16\varepsilon}},
\]
where we use $M \ll \varepsilon^{-\frac12} e^{\frac{\pi^2}{16\varepsilon}}$ from Proposition \ref{prop:H_narrow}.
\end{proof}

%%%%%%%%%%%%%%%%%%%%%%%%%%%%%%%%%%%%%%%%%%%%%
\subsection{Asymptotic of $H_{1,1}(q)$}\label{sec:H_asymp}

The following proposition gives wider range estimates near the peak and here we omit the proof.

\begin{proposition}\label{prop:H_wider}
	Let $n \in \mathcal{N}_{\varepsilon}$ and $v \in \mathbb{C}$ be such that
	\begin{equation*}
		n = \frac{\log(\sqrt{2}+1)}{2\varepsilon} + \frac{v}{\sqrt{z}}. 
	\end{equation*}
	Suppose that $y \ll 1$. Then we have
	\begin{align*}
		\frac{ n q^{n^2+n} (-q^{-1};q^2)_n}{(q^2;q^2)_n}
		=  \frac{\log(w^{-1})}{(1+w)}\sqrt{\frac{w}{\pi z(1-w)}} \exp\left( \frac{\Lambda(y)}{2z} + \Log\left(\frac{w(1+w)}{1-w}\right) \frac{v}{\sqrt{z}} -\left(1+\frac{2w}{1-w^2}\right) v^2 \right)\\
		+ O\left( \varepsilon^{\frac15} \right) \frac{1}{\sqrt{z}}\exp\left( \frac{\Lambda(y)}{2z} + \Log\left(\frac{w(1+w)}{1-w}\right) \frac{v}{\sqrt{z}} -\left(1+\frac{2w}{1-w^2}\right) v^2 \right),
	\end{align*}
	where $w := (\sqrt{2}-1)^{1+iy}$ and 
	\[
		\Lambda(y) := \frac{\pi^2}{4} - \Li_2(w)+ \Li_2(-w)  - \frac12 (1+iy)^2 \log^2(\sqrt{2}+1).
	\]
\end{proposition}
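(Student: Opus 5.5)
The plan is to repeat the proof of Proposition \ref{prop:wider}, now with the three factors that already appeared in the proof of Proposition \ref{prop:H_narrow}. Write $n=\frac{\log(\sqrt2+1)}{2\varepsilon}+\frac{v}{\sqrt z}$. Since $2\varepsilon=2z/(1+iy)$, this gives
\[
q^{2n}=\exp\left(-(1+iy)\log(\sqrt2+1)-2v\sqrt z\right)=w\,e^{-2v\sqrt z},
\]
with $w=(\sqrt2-1)^{1+iy}$ and $|w|=\sqrt2-1<1$. Put $\nu=v/\sqrt z$. Because $n\in\mathcal N_\varepsilon$, we have $\nu\ll\varepsilon^{-3/5}$, and $|z|\asymp\varepsilon$ for $y\ll1$. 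Hence $\nu z=o(1)$, and therefore also $v\ll\varepsilon^{-1/10}$.

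\textbf{Three expansions.} We then expand each factor in turn.
\begin{itemize}
\item[(i)] Apply Lemma \ref{lem:Zagier_asym} with $q\to q^2$, $z\to 2z$, parameter $w$, together with Lemma \ref{lem:eta_asym} at $2z$. This gives
\[
\Log\frac{1}{(q^2;q^2)_n}=\frac{\pi^2/6-\Li_2(w)}{2z}-\left(\nu+\tfrac12\right)\Log(1-w)+\tfrac12\Log\left(\tfrac{z}{\pi}\right)-\frac{w}{1-w}v^2+\psi_w(\nu,2z)+O(z).
\]
\item[(ii)] For $(-q^{-1};q^2)_n$, use the product identity for $(-q^{-1};q^2)_\infty$ quoted in Proposition \ref{prop:H_narrow}; by Lemma \ref{lem:eta_asym} its logarithm is $\frac{\pi^2}{24z}+\log 2+O(z)$. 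Then apply Lemma \ref{lem:Zagier_asym} with parameter $-w$ and shift $\nu-\tfrac32$ to $(-q^{2n-1};q^2)_\infty$. This yields
\[
\frac{\pi^2/12+\Li_2(-w)}{2z}+\nu\Log(1+w)+\mathrm{const}-\frac{w}{1+w}v^2+\psi_{-w}\left(\nu-\tfrac32,2z\right)+O(v\sqrt z).
\]
\item[(iii)] Finally,
\[
-(n^2+n)z=-\frac{(1+iy)^2\log^2(\sqrt2+1)}{4z}-(1+iy)\log(\sqrt2+1)\,\nu-v^2+O(1+v\sqrt z).
\]
\end{itemize}

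\textbf{Collecting terms.} The $1/z$ terms sum to $\Lambda(y)/(2z)$. The coefficient of $\nu$ is
\[
\log w-\Log(1-w)+\Log(1+w)=\Log\frac{w(1+w)}{1-w},
\]
using $-(1+iy)\log(\sqrt2+1)=\log w$. The $v^2$ coefficient is
\[
-1-\frac{w}{1-w}-\frac{w}{1+w}=-\left(1+\frac{2w}{1-w^2}\right).
\]
The constant terms are $-\tfrac12\Log(1-w)$, $\log 2$ and $-\tfrac12\Log(1+w)$, plus the $\tfrac12\log w$ coming from the $n$-dependence. The prefactor $n$ equals $\frac{(1+iy)\log(\sqrt2+1)}{2z}(1+O(v\sqrt z))=\frac{\log(w^{-1})}{2z}(1+O(\varepsilon^{2/5}))$. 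Multiplying by $\sqrt{z/\pi}$ then produces the stated leading constant $\frac{\log(w^{-1})}{1+w}\sqrt{\frac{w}{\pi z(1-w)}}$. I would check this constant at $y=0$ against Proposition \ref{prop:H_narrow}: there the factors simplify to $2^{1/4}$ times $\frac{\log(\sqrt2+1)}{2}$, consistent.

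\textbf{Error control (main obstacle).} It remains to show that all leftover terms are $O(\varepsilon^{1/5})$ uniformly in $v$. These are $\psi_w(\nu,2z)$, $\psi_{-w}(\nu-\tfrac32,2z)$, the $O(v\sqrt z)$ cross terms, and the $O(z)$ pieces. Using $v\ll\varepsilon^{-1/10}$ and $|z|\ll\varepsilon$, the term $v\sqrt z$ is $\ll\varepsilon^{2/5}$. By the last clause of Lemma \ref{lem:Zagier_asym}, the $\nu^k$ coefficients of $\psi$ are $O(z^{2k/3})$. Hence the $k$-th term is $\ll\varepsilon^{-3k/5+2k/3}=\varepsilon^{k/15}$, which is too weak for $k=1,2$. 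For those I would instead read off the explicit $r=2,3$ terms of \eqref{eq:psi}: they are $\ll|\nu| z^2+z$ and $\ll\nu^2z^2+\nu z^2$, which are at most $\varepsilon^{4/5}$. For $k\ge3$, the term $(\nu z)^k$ with $r\ge k+1$ gives $\ll\varepsilon^{2k/5}\le\varepsilon^{6/5}$. So the whole remainder is $O(\varepsilon^{1/5})$, and exponentiating gives $1+O(\varepsilon^{1/5})$. The delicate point is confirming that the constants in Lemma \ref{lem:Zagier_asym} stay uniform as $w$ ranges over the circle $|w|=\sqrt2-1$, for $y$ in a bounded set. This should follow because $\Li_{2-r}(\pm w)$ is continuous on this compact set away from $w=1$.
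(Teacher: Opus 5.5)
Your approach is the one the paper intends. The paper omits this proof, but it is the argument of Proposition~\ref{prop:wider} applied to the three-factor decomposition from Proposition~\ref{prop:H_narrow}. Your $1/z$, $\nu$ and $v^2$ coefficients all come out right. Two computational points are wrong as written; both are easy to repair, and the conclusion survives.

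\emph{The constant from (ii).} The factor $(-q^{2n-1};q^2)_\infty$ sits in the denominator, and the shift is $\nu-\tfrac32$. So Lemma~\ref{lem:Zagier_asym} contributes $+\bigl((\nu-\tfrac32)+\tfrac12\bigr)\Log(1+w)=(\nu-1)\Log(1+w)$, together with $-\psi_{-w}(\nu-\tfrac32,2z)$; note the minus sign on $\psi_{-w}$. The constant is therefore $\log 2-\Log(1+w)$, not $\log 2-\tfrac12\Log(1+w)$. Only this version gives the factor $\frac{1}{1+w}$. Your list instead produces $\log(w^{-1})\sqrt{w/(\pi z(1-w^2))}$. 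At $y=0$ that equals $\log(\sqrt2+1)/\sqrt{2\pi z}$ rather than $\log(\sqrt2+1)/(2^{3/4}\sqrt{\pi z})$, so the $y=0$ check you propose would actually have caught this. With the correct constant, at $y=0$ the expression $(\nu-1)\log\sqrt2+\log 2$ reproduces the paper's $(\frac{u}{2\sqrt z}+\frac12)\log 2$.

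\emph{The error exponents.} $B_r$ is monic, so $B_r(-\nu)$ has leading term $(-\nu)^r$, and this is cancelled only at $r=2$. This has two consequences:
\begin{itemize}
\item For $k\ge 3$, $\nu^k$ first occurs at $r=k$, with $z^{k-1}$, not at $r=k+1$.
\item The $r=2$ term is $-(\nu+\tfrac16)\Li_0(w)z\ll|\nu z|+|z|\ll\varepsilon^{2/5}$, not $|\nu||z|^2+|z|$.
\end{itemize}
In particular, $\psi_w(\nu,2z)$ contains $\tfrac23\Li_{-1}(w)\,\nu^3z^2$. At the edge of $\mathcal N_\varepsilon$ this has size $\varepsilon^{-9/5}\cdot\varepsilon^{2}=\varepsilon^{1/5}$. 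More generally, $\nu^kz^{k-1}\ll\varepsilon^{(2k-5)/5}$.

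This cubic term is exactly where the $\varepsilon^{1/5}$ in the statement comes from. Your bounds of $\varepsilon^{4/5}$ for $r=2,3$ and $\varepsilon^{6/5}$ for $k\ge3$ would force an $O(\varepsilon^{2/5})$ remainder, which is false. Every term is still $O(\varepsilon^{1/5})$; for $k\ge3$ your own weak bound $\varepsilon^{k/15}$ already suffices. So the proof goes through once these estimates are corrected.
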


Next, we show that the asymptotic of $H_{1,1}(q)$ in Proposition \ref{prop:H11_narrow} is valid for $|y|\ll 1$. To this end, we need to prove the following lemma, which can be proven in a similar way to Lemma \ref{lem:s}. 

\begin{lemma}\label{lem:H_s}
	Let $s(y):=\Re\left(\frac{\Lambda(y)}{2(1+iy)} - \frac{\pi^2}{16}\right)$. Then we have the following.
	\begin{enumerate}
		\item $s(y) \leq 0$ for all $y \in \mathbb{R}$, and the equality holds if and only if $y=0$.
		\item As $y \to 0$,
		\[
			s(y) = \left(-\frac{\pi^2}{16} + \frac12\log^2(\sqrt{2}+1) \right)y^2 + O\left(y^4\right).
		\]
	\end{enumerate}
\end{lemma}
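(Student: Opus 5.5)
The plan follows the proof of Lemma \ref{lem:s}. Write $a:=\log(\sqrt2+1)$ and $w=(\sqrt2-1)^{1+iy}=e^{-a(1+iy)}$. Expanding the dilogarithms termwise, $-\Li_2(w)+\Li_2(-w)=-2\sum_{k \text{ odd}} e^{-ak}e^{-iaky}/k^2$. Hence
\[
	f(y):=2(1+y^2)s(y)=\Re\big((1-iy)\Lambda(y)\big)-\frac{\pi^2}{8}(1+y^2)
\]
takes the explicit form
\[
	f(y)=\frac{\pi^2}{4}-2\sum_{k\ \mathrm{odd}}\frac{e^{-ak}\cos(aky)}{k^2}+2y\sum_{k\ \mathrm{odd}}\frac{e^{-ak}\sin(aky)}{k^2}-\frac{a^2}{2}(1+y^2)-\frac{\pi^2}{8}(1+y^2).
\]
The $\Re((1-iy)(1+iy)^2)=1+y^2$ term comes from the $\log^2$ piece of $\Lambda$.

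Set $C:=\Li_2(\sqrt2-1)-\Li_2(1-\sqrt2)=2\sum_{k\ \mathrm{odd}}e^{-ak}/k^2$. By the identity $C=\frac{\pi^2}{8}-\frac{a^2}{2}$ used in Proposition \ref{prop:H_narrow}, we get $f(0)=\frac{\pi^2}{4}-C-\frac{a^2}{2}-\frac{\pi^2}{8}=0$. The function $f$ is even and $f'(0)=0$. Since $s(y)=f(y)/(2(1+y^2))$, the claims reduce to showing that $f(y)<0$ for $y>0$, together with the Taylor coefficient of $y^2$.

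\textbf{Part (2).} Expand $f$ at $y=0$ using $\cos x=1-x^2/2+\dots$ and $\sin x=x+\dots$. The coefficient of $y^2$ is
\[
	a^2\sum_{k\ \mathrm{odd}}e^{-ak}+2a\sum_{k\ \mathrm{odd}}\frac{e^{-ak}}{k}-\frac{a^2}{2}-\frac{\pi^2}{8}.
\]
Evaluate each sum in closed form. First, $\sum_{k\ \mathrm{odd}}e^{-ak}=\frac{e^{-a}}{1-e^{-2a}}=\frac{\sqrt2-1}{2(2-\sqrt2)}\cdot\ldots=\frac12$, since $w_0=\sqrt2-1$ satisfies $1-w_0^2=2w_0$. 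Second, $\sum_{k\ \mathrm{odd}}e^{-ak}/k=\operatorname{artanh}(w_0)=\frac12\log\frac{1+w_0}{1-w_0}=\frac a2$. The coefficient of $y^2$ in $f$ is therefore $\frac{a^2}{2}+a^2-\frac{a^2}{2}-\frac{\pi^2}{8}=a^2-\frac{\pi^2}{8}$. Dividing by $2(1+y^2)$ gives $s(y)=\left(\frac{a^2}{2}-\frac{\pi^2}{16}\right)y^2+O(y^4)$, as claimed; note $a^2\approx0.777<\pi^2/8$. Evenness gives the $O(y^4)$ remainder.

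\textbf{Part (1).} This is the main obstacle, and I would split into two ranges as in Lemma \ref{lem:s}.

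For large $y$, say $y\ge1$, use $\cos(aky)\ge 1-a^2k^2y^2/2$ only for $k=1$ and $|\cos|,|\sin|\le1$ for $k\ge3$. For $k=1$ also use $y\sin(ay)\le ay^2$. This bounds $f(y)$ by a quadratic $A+By-Dy^2$, where
\[
	D=\frac{\pi^2}{8}+\frac{a^2}{2}-e^{-a}a^2-2ae^{-a}\approx1.234+0.389-0.322-0.730>0.
\]
The tail constants are small ($2\sum_{k\ge3\ \mathrm{odd}}e^{-ak}/k^2\approx0.0016$), so negativity for $y\ge1$ should follow numerically. If the margin is tight, I would peel off the $k=3$ term as well.

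For $0<y\le1$, bound $f''(y)$ above using $|\cos|,|\sin|\le1$:
\[
	f''(y)\le 2a^2\sum_{k\ \mathrm{odd}}e^{-ak}+4a\sum_{k\ \mathrm{odd}}\frac{e^{-ak}}{k}+2a^2y\sum_{k\ \mathrm{odd}}e^{-ak}-a^2-\frac{\pi^2}{4}.
\]
By the closed forms above this equals $2a^2+a^2y-\frac{\pi^2}{4}$, which is at most $3a^2-\pi^2/4\approx2.33-2.47<0$ on $(0,1]$. Then $f(y)<f(0)+f'(0)y=0$, and combined with evenness this gives $s(y)<0$ for all $y\neq0$, with equality only at $y=0$.

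The delicate point is exactly this narrow numerical margin in $f''$. If it failed, I would shrink the interval to $0<y\le y_0$ and extend the large-$y$ quadratic estimate down to $y_0$.
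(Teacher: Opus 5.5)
Your proof is correct and follows the route the paper intends; the paper only says the lemma is proved like Lemma \ref{lem:s}. That route is: normalize to $f=2(1+y^2)s$, use a quadratic upper bound for $y\ge 1$, and for $0<y\le 1$ use concavity via $f''(y)\le 2a^2+a^2y-\frac{\pi^2}{4}<0$ together with $f(0)=f'(0)=0$, where your closed forms $\sum_{k\ \mathrm{odd}}(\sqrt2-1)^k=\frac12$ and $\sum_{k\ \mathrm{odd}}(\sqrt2-1)^k/k=\frac a2$ make both that bound and the $y^2$-coefficient in part (2) exact. One harmless numerical slip: the tail $T=2\sum_{k\ge 3\ \mathrm{odd}}e^{-ak}/k^2$ is about $0.017$, not $0.0016$ (and the identity $C=\frac{\pi^2}{8}-\frac{a^2}{2}$ gives exactly $A=2T$, $B=T$), but $2T+Ty-Dy^2$ with $D\approx 0.57$ is still clearly negative for $y\ge 1$.
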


Now, we are ready to prove the asymptotic of $H_{1,1}(q)$ as $z=\varepsilon(1+iy) \to 0$ in the bounded cone $|y|\ll 1$. 
\begin{proposition}\label{prop:H11_asymp}
	If $y \ll 1$, then we have, as $z \to 0$, 
	\[
		H_{1,1}(q) = \frac{\log(\sqrt{2}+1) }{2^{5/4}}  z^{-1} e^{\frac{\pi^2}{16 z}} + O\left(\varepsilon^{-\frac{9}{10}} e^{\frac{\pi^2}{16\varepsilon}}\right).
	\]
\end{proposition}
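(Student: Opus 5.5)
We will mirror the proof of Proposition~\ref{prop:S11_asymp}. By Proposition~\ref{prop:G_far}, the tail $\mathcal{E}_{1,1}(q)$ is $\ll \varepsilon^{L-1}e^{\frac{\pi^2}{16\varepsilon}}$ for every $L$, uniformly in the right half-plane. So it suffices to prove the claim
\[
	z e^{-\frac{\pi^2}{16\varepsilon}} \left(\mathcal{H}_{1,1}(q) - \frac{\log(\sqrt{2}+1)}{2^{5/4}} z^{-1} e^{\frac{\pi^2}{16 z}}\right) \ll \varepsilon^{\frac1{10}}
\]
for $y\ll 1$. We split the range of $y$ into $y\ll \varepsilon^{\frac25+\delta}$ and $\varepsilon^{\frac12-\delta}\ll y\ll 1$; for small $\delta>0$ these two ranges overlap and cover everything. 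On the first range the claim is exactly Proposition~\ref{prop:H11_narrow}.

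On the second range we show that both terms inside the parentheses are separately $O(\varepsilon^L)$ after multiplication by $z e^{-\frac{\pi^2}{16\varepsilon}}$. For the main term this is immediate. Indeed,
\[
	\Re\left(\frac{\pi^2}{16z}-\frac{\pi^2}{16\varepsilon}\right) = -\frac{\pi^2}{16\varepsilon}\frac{y^2}{1+y^2} \ll -\varepsilon^{-2\delta}.
\]

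For $\mathcal{H}_{1,1}(q)$ we use Proposition~\ref{prop:H_wider}. The prefactors there are $O(\varepsilon^{-1/2})$ and the sum has $O(\varepsilon^{-3/5})$ terms, so it is enough to show that the real part of the exponent
\[
	\frac{1}{\varepsilon}\left(\frac{\Lambda(y)}{2(1+iy)}-\frac{\pi^2}{16}\right) + \Log\left(\frac{w(1+w)}{1-w}\right)\frac{v}{\sqrt z} - \left(1+\frac{2w}{1-w^2}\right)v^2
\]
is $\ll -\varepsilon^{-\delta_0}$ for some $\delta_0>0$, uniformly for $n\in\mathcal{N}_\varepsilon$.

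\begin{enumerate}
	\item The first term has real part $s(y)/\varepsilon$. By Lemma~\ref{lem:H_s}(1) this is negative, and by Lemma~\ref{lem:H_s}(2) together with compactness of $|y|\ll 1$ away from $y=0$, it satisfies $s(y)/\varepsilon \ll -y^2/\varepsilon \ll -\varepsilon^{-2\delta}$. Here we need the coefficient $-\frac{\pi^2}{16}+\frac12\log^2(\sqrt2+1)$ to be negative, which holds numerically since $\approx -0.617+0.388<0$.
	\item For the linear term, $w\mapsto \log\left|\frac{w(1+w)}{1-w}\right|$ vanishes at $y=0$, because $\frac{(\sqrt2-1)\sqrt2}{2-\sqrt2}=1$. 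Its $y$-derivative there is also zero, since $|w|$ depends on $y$ only through the phase. Hence its real part is $O(y^2)$. Combined with $|v/\sqrt z|\le\varepsilon^{-3/5}$, the linear term contributes $O(\varepsilon^{-3/5}y^2)$, which is dominated by $y^2/\varepsilon$.
	\item For the quadratic term we consider two cases. If $y\ll\varepsilon^{1/4}$, then $v$ is a real multiple of $\sqrt z$ and $\Re\left(\left(1+\frac{2w}{1-w^2}\right)(1+iy)\right)>0$ for small $y$ (it is $1+\frac{2w_0}{1-w_0^2}>0$ at $y=0$), so this term has negative real part. If $\varepsilon^{1/3}\ll y\ll1$, we use $|v|\ll\varepsilon^{-1/10}$ and the boundedness of $1+\frac{2w}{1-w^2}$ on $|w|=\sqrt2-1$ to get a contribution of $O(\varepsilon^{-1/5})$, again dominated by $\varepsilon^{-2\delta}$ once $\delta$ is chosen suitably relative to $\frac1{10}$.
\end{enumerate}

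Combining the three items gives the claim on the second range, and with Proposition~\ref{prop:H11_narrow} on the first range, the proposition follows.

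The main obstacle is item~1, namely verifying that Lemma~\ref{lem:H_s} really gives the uniform bound $s(y)\ll -y^2$ on the whole range $|y|\ll1$, and that the second-order behaviour of the linear coefficient in item~2 is as claimed. Both are Taylor computations at $y=0$ in the spirit of Lemma~\ref{lem:s}; we would carry them out first, checking the expansion of $\Lambda(y)$ to order $y^2$.
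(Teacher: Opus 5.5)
Your proposal is correct and follows the paper's proof essentially step for step: the reduction via Proposition~\ref{prop:G_far}, the narrow range via Proposition~\ref{prop:H11_narrow}, and on $\varepsilon^{\frac12-\delta}\ll y\ll 1$ the same analysis of the exponent from Proposition~\ref{prop:H_wider} using Lemma~\ref{lem:H_s}, the $O(y^2)$ bound for the linear coefficient, and the same two cases for the quadratic term. One small fix: in the case $\varepsilon^{\frac13}\ll y\ll 1$, the $O(\varepsilon^{-\frac15})$ term is beaten because $s(y)/\varepsilon\le -c\,y^2/\varepsilon\le -c'\varepsilon^{-\frac13}$ there, not by taking $\delta$ large relative to $\frac1{10}$; a large $\delta$ would break the condition $\delta<\frac1{20}$ that makes the ranges $y\ll\varepsilon^{\frac25+\delta}$ and $\varepsilon^{\frac12-\delta}\ll y$ overlap.
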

\begin{proof}
By Proposition \ref{prop:G_far},  the desired asymptotic follows from the claim:
\[
	z e^{-\frac{\pi^2}{16\varepsilon}} \left(\sum_{n \in \mathcal{N}_{\varepsilon}} \frac{ n q^{n^2+n} (-q^{-1};q^2)_n}{(q^2;q^2)_n} - \frac{\log(\sqrt{2}+1) }{2^{5/4}}  z^{-1} e^{\frac{\pi^2}{16 z}} \right) \ll \varepsilon^{\frac{1}{10}} .
\]

From Proposition~\ref{prop:H11_narrow}, it suffices to show that if $\varepsilon^{\frac12-\delta}\ll y \ll 1$ for some $\delta>0$, then we have, for all $L \in \mathbb{N}$,
\begin{equation}\label{eq:H_first_part}
	z e^{-\frac{\pi^2}{16\varepsilon}}\sum_{n \in \mathcal{N}_{\varepsilon}} \frac{ n q^{n^2+n} (-q^{-1};q^2)_n}{(q^2;q^2)_n} = O\left(\varepsilon^L\right).
\end{equation}
If $y \ll 1$, by Proposition \ref{prop:H_wider},
\begin{align*}
	z e^{-\frac{\pi^2}{16\varepsilon}}\sum_{n \in \mathcal{N}_{\varepsilon}}  \frac{ n q^{n^2+n} (-q^{-1};q^2)_n}{(q^2;q^2)_n} 
	= \frac{\log(w^{-1})}{1+w} \sqrt{\frac{w z}{\pi(1-w)}} e^{\frac{1}{2\varepsilon}\left(\frac{\Lambda(y)}{1+iy}-\frac{\pi^2}{8}\right)} \sum_{n \in \mathcal{N}_{\varepsilon}} e^{-\left(1+\frac{2w}{1-w^2}\right)v^2 +\frac{v}{\sqrt{z}} \Log\left(\frac{w(1+w)}{1-w}\right)}\\
    + O\left(\varepsilon^{\frac15}\right) \sqrt{z} e^{\frac{1}{2\varepsilon}\left(\frac{\Lambda(y)}{1+iy}-\frac{\pi^2}{8}\right)} \sum_{n \in \mathcal{N}_{\varepsilon}}  e^{-\left(1+\frac{2w}{1-w^2}\right)v^2 +\frac{v}{\sqrt{z}} \Log\left(\frac{w(1+w)}{1-w}\right)},
\end{align*}
where $w = (\sqrt{2}-1)^{1+iy}$.
Thus, to prove \eqref{eq:H_first_part}, it is sufficient to show that the exponent
\begin{equation}\label{eq:H_exp}
	\frac{1}{2\varepsilon}\left(\frac{\Lambda(y)}{1+iy}-\frac{\pi^2}{8}\right) -\left(1+\frac{2w}{1-w^2}\right)v^2 +\frac{v}{\sqrt{z}} \Log\left(\frac{w(1+w)}{1-w}\right)
\end{equation}
has negative real part of size $\gg \varepsilon^{-\delta_0}$ for some $\delta_0>0$ since the other terms are bounded as $z\to 0$.

If $\varepsilon^{\frac12-\delta} \ll y \ll 1$ for some $\delta>0$, then %$ \delta < \frac12$.
the real part of $\frac{1}{2\varepsilon}\left(\frac{\Lambda(y)}{1+iy}-\frac{\pi^2}{8}\right)$ is $\frac{s(y)}{\varepsilon}$, which is negative by Lemma \ref{lem:H_s} (1). We also obtain, as $\varepsilon \to 0$, 
\[
	\frac{s(y)}{\varepsilon} \gg \frac{y^2}{\varepsilon} \gg \varepsilon^{-2\delta}
\]
by Lemma \ref{lem:H_s} (2). 
From the Taylor series expansion, as $y \to 0$, 
\[
	\Re\left( \Log\left(\frac{w(1+w)}{1-w}\right) \right) = \log \left|\frac{w(1+w)}{1-w} \right| = -\frac{\log^2(\sqrt{2}-1)}{\sqrt{2}} y^2 + O\left(y^4\right) \ll y^2.
\]
Since $|\nu|=\left|\frac{v}{\sqrt{z}}\right| \ll \varepsilon^{-\frac35}$, 
\[
	\Re\left( \frac{v}{\sqrt{z}} \Log\left(\frac{w(1+w)}{1-w}\right) \right) \ll \varepsilon^{-\frac35} y^2.
\]	
As $\frac{y^2}{\varepsilon} \gg \varepsilon^{-\frac35} y^2$, $\frac{s(y)}{y}$ dominates $\Re\left( \frac{v}{\sqrt{z}} \Log\left(\frac{w(1+w)}{1-w}\right) \right)$. For the term $\left(1+\frac{2w}{1-w^2}\right)v^2$, we consider the following two cases.
\begin{enumerate}
	\item Suppose $\varepsilon^{\frac12-\delta} \ll y \ll \varepsilon^{\frac14}$. As $v$ is a real multiple of $\sqrt{z}$ and $y\ll \varepsilon^{\frac14}$, we have that for $y \ll \varepsilon^{\frac14}$,
	\[
		\Re \left(\left(1+\frac{2w}{1-w^2}\right) (1+iy)\right) >0.
	\]
	Thus,  $-\left(1+\frac{2w}{1-w^2}\right)v^2$ has negative real part as $z\to 0$.
	
	\item If $\varepsilon^{\frac13} \ll y \ll 1$, then $|z|\ll \varepsilon$. As $n \in \mathcal{N}_{\varepsilon}$, we have $|v|\ll \varepsilon^{-\frac1{10}}$. Thus, with the fact  $\left|1+\frac{2w}{1-w^2}\right| \leq 2$, we find $-\left(1+\frac{2w}{1-w^2}\right)v^2 \ll  \varepsilon^{-\frac15}$. 
\end{enumerate}
In either case, the exponent $\frac{s(y)}{\varepsilon}$ has size $\gg \varepsilon^{-2\delta}$ and dominates the other exponents in \eqref{eq:H_exp} that have positive real part. Hence, we prove the claim \eqref{eq:H_first_part} by taking $\delta_0=2\delta$.

Next, in the expression  $\frac{\log(\sqrt{2}+1) }{2^{5/4}} e^{\frac{\pi^2}{16 z}-\frac{\pi^2}{16\varepsilon}}$, the exponential part has negative real part of size
\[
	\Re\left(\frac{\pi^2}{16 z}-\frac{\pi^2}{16\varepsilon}\right) = \frac{\pi^2}{16\varepsilon} \left( \Re\left(\frac{1}{1+iy}\right) -1\right) = - \frac{\pi^2}{16\varepsilon} \frac{y^2}{1+y^2} \gg \varepsilon^{-2\delta}.
\]
for $\varepsilon^{\frac12-\delta} \ll y \ll 1$. It follows that for $\varepsilon^{\frac12-\delta} \ll y \ll 1$,  %we have, for all $L \in \mathbb{N}$,
\[
	\frac{\log(\sqrt{2}+1) }{2^{5/4}} e^{\frac{\pi^2}{16 z}-\frac{\pi^2}{16\varepsilon}} \ll \varepsilon^L \quad \text{for all $L \in \mathbb{N}$}. \qedhere
\]
%Finally, for $y \ll \varepsilon^{\frac25+\delta}$ for some $\delta>0$, then, by Proposition \ref{prop:H11_narrow}, we have
%\[
%	\sum_{n \in \mathcal{N}_{\varepsilon}} \frac{ n q^{n^2+n} (-q^{-1};q^2)_n}{(q^2;q^2)_n} - \frac{\log(\sqrt{2}+1) }{2^{5/4}}  z^{-1} e^{\frac{\pi^2}{16 z}} \ll \varepsilon^{-\frac{9}{10}} e^{\frac{\pi^2}{16\varepsilon}}.
%\]
%Choosing $\delta>0$ sufficiently small, the two cases, $y \ll \varepsilon^{\frac25+\delta}$ and $\varepsilon^{\frac12-\delta} \ll y \ll 1$, cover the full range $y \ll 1$, which complete the proof.
\end{proof}

%%%%%%%%%%%%%%%%%%%%%%%%%%%%%%%%%%%%%%%%%%%%% 
\subsection{Proofs of Theorems \ref{thm:g1_asymp} and \ref{thm:g2_asymp}}
In this subsection, we finally prove asymptotic formulas. 
\begin{proof}[Proof of Theorem \ref{thm:g1_asymp}]
By Proposition \ref{prop:H11_asymp}, we have, as $z=\varepsilon(1+iy) \to 0$, where $\varepsilon>0$ and $y \ll 1$,  
\[
	H_{1,1}(q) \sim \frac{\log(\sqrt{2}+1) }{2^{\frac54}}  z^{-1} e^{\frac{\pi^2}{16 z}}.
\]
Similarly to Proposition \ref{prop:H11_asymp}, we can show that $H_{1,2}(q)$ has the same asymptotic as $H_{1,1}(q)$.

Since $g_{1,1}(n)$ and $g_{2,2}(n)$ are clearly weakly increasing, we can employ Theorem  \ref{Tauberian} to obtain the asymptotics of $g_{2,1}(n)$ and $g_{1,2}(n)$ as $n \to \infty$. 
\end{proof}

Now we turn to prove Theorem~\ref{thm:g2_asymp}.

\begin{proof}[Proof of Theorem \ref{thm:g2_asymp}]
Recall Propositions \ref{gen_H1} and \ref{gen_H2}:
\begin{align*}
	H_{2,1}(q)&=\sum_{n\ge 0} g_{2,1}(n)q^n = \frac{1}{(q, q^5, q^6;q^8)_{\infty}} \frac{q+q^5+q^6}{1-q^{8}},\\
	H_{2,2}(q)&=\sum_{n\ge 0} g_{2,2}(n) q^{n} = \frac{1}{(q,q^5,q^6;q^8)_\infty}\left( \frac{q^5+q^6+q^9}{1-q^8} + \frac{q^2+q^{10}-q^{11}+q^{12}}{1-q^{16}} \right).
\end{align*}
Similarly to Proposition \ref{prop:H11_asymp}, we can find that as $z=\varepsilon(1+iy) \to 0$,   
\[
	\frac{1}{(q, q^5, q^6;q^8)_{\infty}} = \sum_{n \geq 0} \frac{ q^{n^2+n} (-q^{-1};q^2)_n}{(q^2;q^2)_n} \sim \frac{1}{2^{\frac14}} e^{\frac{\pi^2}{16 z}},
\]
where $\varepsilon>0$ and $y \ll 1$. Also, using the asymptotics
\[
	\frac{q+q^5+q^6}{1-q^8} \sim \frac{3}{8z}  \qquad\text{and}\qquad 
	\frac{q^5+q^6+q^9}{1-q^8} + \frac{q^2+q^{10}-q^{11}+q^{12}}{1-q^{16}} \sim \frac{1}{2z} \qquad \text{as } z \to 0,
\]
we have, as $z=\varepsilon(1+iy) \to 0$, where $\varepsilon>0$ and $y \ll 1$,  
\[
	H_{2,1}(q) \sim \frac{3}{2^{\frac{13}{4}}z} e^{\frac{\pi^2}{16z}} \qquad\text{and}\qquad 
	H_{2,2}(q) \sim \frac{1}{2^{\frac{5}{4}}z} e^{\frac{\pi^2}{16z}}. 
\]

Since $g_{2,1}(n)$ and $g_{2,2}(n)$ are clearly weakly increasing, we can employ Theorem  \ref{Tauberian} to obtain the asymptotics of $g_{2,1}(n)$ and $g_{2,2}(n)$ as $n \to \infty$. 
\end{proof} 

%%%%%%%%%%%%%%%%%%%%%%%%%%%%%%%%%%%%%%%%%%%%%
%          Section.   %
%%%%%%%%%%%%%%%%%%%%%%%%%%%%%%%%%%%%%%%%%%%%%
\section{Final Remarks}\label{sec7}

In this paper, we investigate the $t$-hook inequalities between two equinuermous sets arising from the first Rogers--Ramanujan identity and the first little G\"ollnitz identity. This analysis can be extended to the second Rogers--Ramanujan identity and the second little G\"ollnitz identity. The differences between the first and second identities are the following. For part difference conditions, parts are all greater than $1$, while for part congruence conditions, parts are congruent to $2, 3$ modulo $5$ for the second Rogers--Ramanujan, and congruent to $2, 3, 7$ modulo 8 for the second little G\"ollnitz. Another example with gap $2$ conditions is the G\"ollnitz--Gordon identities \cite{gollnitz, gordon}.  The first G\"ollnitz--Gordon identity states that the number of partitions of $n$ with parts differing by $2$ and no even parts differing by exactly $2$ is equal to the number of partitions of $n$ into parts congruent to $1, 4$ or $7$ modulo $8$. As the corresponding generating functions for these partition sets with  the gap conditions are likely Nahm sums, we hope that the asymptotic method used in this paper will guide the further analysis of such generating functions. For example, the number of $1$-hooks in partitions with the gap 2 condition from the second Rogers--Ramanujan identity is generated by $\sum_{n \ge 1} \frac{n q^{n^2+n}}{(q;q)_{n}}$, to which our method can be applied.

%For $t=1,2$, it is {\color{cyan} more straightforward to find the generating functions for the number of $t$-hooks in these partitions compared to the partitions we considered in our paper}.

 %5{\color{cyan} We anticipate similar results hold true.}

One of the key players in our proofs was the generating function for the number of $t$-hooks. For large $t$, it seems not easy to find the generating function. Based on numerical experiments for small $t$ and $n$, we have observed that for sufficiently large $n$, there are more $1$-hooks in all partitions of $n$ satisfying certain gap conditions than in all partitions of $n$ satisfying the corresponding congruence conditions while the inequality reverses for $t>1$, and the congruence condition partitions have more $t$-hooks. In particular, we conjecture that for $t>2$,
\[
    r_{1,t} (n) < r_{2,t} (n) \quad\text{and}\quad g_{1,t}(n) < g_{2,t} (n)
\]
holds for all sufficiently large integers $n$.

%%%%%%%%%%%%%%%%%%%%%%%%%%%%%%%%%%%%%%%%%%%%%
%          Section.     %  
%%%%%%%%%%%%%%%%%%%%%%%%%%%%%%%%%%%%%%%%%%%%%
\section*{Acknowledgments}

Aritram Dhar would like to thank George E. Andrews for hosting him from May 7 - June 25, 2025 at the Pennsylvania State University as a visiting research scholar where this project started after discussions with Ae Ja Yee. Byungchan Kim was supported by the Basic Science Research Program through the National Research Foundation of Korea (NRF) funded by the Ministry of Science and ICT (RS-2025-16065347).
Eunmi Kim was supported by the Basic Science Research Program through the National Research Foundation of Korea (NRF) funded by the Ministry of Education (RS-2023-00244423). 
Ae Ja Yee was partially supported by a grant ($\#$633963) from the Simons Foundation.

%%%%%%%%%%%%%%%%%%%%%%%%%%%%%%%%%%%%%%%%%%%%%
%%% References 
%%%%%%%%%%%%%%%%%%%%%%%%%%%%%%%%%%%%%%%%%%%%%


\begin{thebibliography}{99}

\bibitem{andrews1969}
G.~E.~Andrews, {\it Two theorems of Euler and a general partition theorem}, Proc.~Amer.~Math.~Soc. {\bf 20} (1969), 499--502.

\bibitem{andrews_book}
G. E. Andrews, The Theory of Partitions, Addison-Wesley, Reading, 1976 (Reprinted: Cambridge University Press, Cambridge, 1985).

\bibitem{andrews_siam}
G. E. Andrews, {\it Applications of basic hypergeometric functions}, SIAM Rev. {\bf 16} (1974), 441--484. 

\bibitem{andrews2017}
G. E. Andrews, {\it Euler's partition identity and two problems of George Beck},  Math. Student {\bf 86} (2017), 115--119. 

\bibitem{AKY}
G. E. Andrews, R. Kumar, and A. J. Yee, {\it On Euler's partition theorem}, Frontiers in Combinatorics and Number Theory
Vol. 1, 2026, pp. 26--30. doi:10.3934/fcnt.2026003

\bibitem{ballantine1}
C. Ballantine, H. Burson, W. Craig, A. Folsom, and B. Wen, {\it Hook length biases and general linear partition inequalities}, Res. Math. Sci. {\bf 10} (2023), Paper No. 41, 36 pp.

\bibitem{BJM}
K.~Bringmann, C.~Jennings-Shaffer, and K.~ Mahlburg, {\it On a Tauberian theorem of Ingham and Euler--Maclaurin summation}, Ramanujan J. {\bf 61} (2023), 55--86.

\bibitem{BMRS}
K. Bringmann,  S. H. Man, L. Rolen,  and M. Storzer, {\it Asymptotics of parity biases for partitions into distinct parts via Nahm sums}, Proc. Lond. Math. Soc. (3) {\bf 129} (2024), no. 6, Paper No. e70010, 40 pp.

\bibitem{craig1}
W. Craig, M. L. Dawsey, and G.-N. Han, {\it Inequalities and asymptotics for hook numbers in restricted partitions}, preprint arXiv:2311.15013v1. 

%\bibitem{folsom}
%A. Folsom, {\it Rogers--Ramanujan moment bias}, Ramanujan J. {\bf 68} (2025), Paper No. 25, 14 pp. 

\bibitem{GZ}
S. Garoufalidis and D. Zagier, {\it Asymptotics of Nahm sums at roots of unity}, Ramanujan J. {\bf 55} (2021), 219--238. 

\bibitem{glaisher}
J. W. L. Glaisher, {\it A theorem in partitions}, Messenger of Math. {\bf 12} (1883), 158--170.

\bibitem{gollnitz}
H. G\"    , {\it Partitionen mit differenzenbedingungen}, J. Reine Angew. Math. {\bf 225} (1967), 154--190.

\bibitem{gordon}
B. Gordon, {\it Some continued fractions of the Rogers--Ramanujan type}, Duke Math J. {\bf 32} (1965), 741--748.

\bibitem{K}
M.~Katsurada, {\it Asymptotic expansions of certain q-series and a formula of Ramanujan for specific values of the Riemann zeta function}, Acta Arith. {\bf 107} (2003), 269--298.

\bibitem{L}
F. M. S. Lima, {\it New definite integrals and a two-term dilogarithm identity} Indag. Math. {\bf 23} (2012), 1--9.

\bibitem{ramanujan}
S. Ramanujan, {\it Proof of certain identities in combinatory analysis}, Proc. Cambridge Philos. Soc. {\bf 19} (1919), 214--216.

\bibitem{rogers}
L. J. Rogers, {\it Second memoir on the expansion of certain infinite products}, Proc. London Math. Soc. {\bf 25} (1894), 318--343.

\bibitem{schur}
I. J. Schur, {\it Zur additiven Zahlentheorie}, S.-B. Akad. Wiss. Berlin, (1926) 488--495.

\bibitem{sylvester}
J. J. Sylvester, {\it A constructive theory of partitions, arranged in three acts, an interact and an exodion}, Amer. J. Math., 5 (1882), 251-330 and 6 (1884), 334-336 (or pp. 1--83 of The Collected Mathematical Papers of James Joseph Sylvester, Vol. 4, Cambridge University Press, Cambridge,
1912).

\bibitem{beck1}
The On-Line Encyclopedia of Integer Sequences, Sequences A090867 and A265251, https://oeis.org.

%\bibitem{beck2}
%The On-Line Encyclopedia of Integer Sequences, Sequence ,  https://oeis.org.

\bibitem{VZ}
M.~ Vlasenko, S.~Zwegers, {\it Nahm's conjecture: asymptotic computations and counterexamples}, Commun. Number Theory Phys. {\bf 5} (2011), 617--642.

\bibitem{W}
L.~Wang, {\it Explicit forms and proofs of Zagier's rank three examples for Nahm's problem}, Adv. Math. {\bf 450} (2024), Paper No. 109743.

\bibitem{Z} 
D. Zagier, {\it The dilogarithm function}, In: Frontiers in Number Theory, Physics, and Geometry II, P. Cartier, B. Julia, P. Monsia, P. Vanhove (eds), Springer-Verlag, Berlin-Heidelberg-New York (2006), 3--65.

\end{thebibliography}
\end{document}